\DeclareFontFamily{U}{rsfs}{} \DeclareFontShape{U}{rsfs}{n}{it}{<->
rsfs10}{} \DeclareSymbolFont{mscr}{U}{rsfs}{n}{it}
\DeclareSymbolFontAlphabet{\scr}{mscr}
\def\mathscr{\scr}
\begin{document}

\newcommand{\red}[1]{\textcolor{red}{#1}}
\def\e#1\e{\begin{equation}#1\end{equation}}
\def\ea#1\ea{\begin{align}#1\end{align}}
\def\eas#1\eas{\begin{align*}#1\end{align*}}
\def\eq#1{{\rm(\ref{#1})}}
\newenvironment{claim}[1]{\par\noindent\underline{Claim:}\space#1}{}
\newenvironment{claimproof}[1]{\par\noindent\underline{Proof:}\space#1}{\hfill $\blacksquare$}
\theoremstyle{plain}
\newtheorem{thm}{Theorem}[section]
\newtheorem{prop}[thm]{Proposition}
\newtheorem{lem}[thm]{Lemma}
\newtheorem{cor}[thm]{Corollary}
\newtheorem{quest}[thm]{Question}
\newtheorem{conj}[thm]{Conjecture}
\theoremstyle{definition}
\newtheorem{dfn}[thm]{Definition}
\newtheorem{ex}[thm]{Example}
\newtheorem{rem}[thm]{Remark}
\numberwithin{equation}{section}

\def\loc{{\mathop{\rm loc}\nolimits}}
\def\ope{{\mathop{\rm op}\nolimits}}

\def\cyl{{\mathop{\rm cyl}\nolimits}}
\def\con{{\mathop{\rm con}\nolimits}}

\def\cay{\mathop{\rm cay}\nolimits}
\def\Cay{\mathop{\rm Cay}\nolimits}
\def\ACay{\mathop{\rm ACay}\nolimits}

\def\ACyl{{\mathop{\rm ACyl}\nolimits}}
\def\AC{{\mathop{\rm AC}\nolimits}}
\def\CS{{\mathop{\rm CS}\nolimits}}
\def\supp{\mathop{\rm supp}\nolimits}
\def\dist{\mathop{\rm dist}\nolimits}
\def\sgn{\mathop{\rm sgn}\nolimits}
\def\dim{\mathop{\rm dim}\nolimits}
\def\Ker{\mathop{\rm Ker}}
\def\Coker{\mathop{\rm Coker}}
\def\Ho{{\mathop{\rm H}}}
\def\sign{\mathop{\rm sign}\nolimits}
\def\id{\mathop{\rm id}\nolimits}
\def\dvol{\mathop{\rm dvol}\nolimits}
\def\spn{\mathop{\rm span}\nolimits}
\def\SO{\mathop{\rm SO}\nolimits}
\def\Orth{\mathop{\rm O}\nolimits}
\def\Fr{\mathop{\rm Fr}\nolimits}
\def\Gr{\mathop{\rm Gr}\nolimits}
\def\cay{\mathop{\rm cay}\nolimits}
\def\inj{\mathop{\rm inj}\nolimits}
\def\SF{\mathop{\rm SF}\nolimits}
\def\Or{\mathop{\rm Or}\nolimits}
\def\ad{\mathop{\rm ad}\nolimits}
\def\Hom{\mathop{\rm Hom}\nolimits}
\def\Map{\mathop{\rm Map}\nolimits}
\def\Crit{\mathop{\rm Crit}\nolimits}
\def\ev{\mathop{\rm ev}\nolimits}
\def\Univ{\mathop{\rm Univ}\nolimits}
\def\Fix{\mathop{\rm Fix}\nolimits}
\def\Hol{\mathop{\rm Hol}\nolimits}
\def\Iso{\mathop{\rm Iso}\nolimits}
\def\Hess{\mathop{\rm Hess}\nolimits}
\def\Stab{\mathop{\rm Stab}\nolimits}
\def\Pd{\mathop{\rm Pd}\nolimits}
\def\Aut{\mathop{\rm Aut}\nolimits}
\def\Diff{\mathop{\rm Diff}\nolimits}
\def\boFlag{\mathop{\rm Flag}\nolimits}
\def\boFlagSt{\mathop{\rm FlagSt}\nolimits}
\def\dOrb{{\mathop{\bf dOrb}}}
\def\dMan{{\mathop{\bf dMan}}}
\def\mKur{{\mathop{\bf mKur}}}
\def\Kur{{\mathop{\bf Kur}}}
\def\Re{\mathop{\rm Re}}
\def\Im{\mathop{\rm Im}}
\def\re{\mathop{\rm re}}
\def\im{\mathop{\rm im}}
\def\SU{\mathop{\rm SU}}
\def\Sp{\mathop{\rm Sp}}
\def\Spin{\mathop{\rm Spin}}
\def\GL{\mathop{\rm GL}}
\def\ind{\mathop{\rm ind}}
\def\area{\mathop{\rm area}}
\def\U{{\rm U}}
\def\vol{\mathop{\rm vol}\nolimits}
\def\virt{{\rm virt}}
\def\emb{{\rm emb}}
\def\bs{\boldsymbol}
\def\ge{\geqslant}
\def\le{\leqslant\nobreak}
\def\O{{\mathbin{\mathcal O}}}
\def\cA{{\mathbin{\mathcal A}}}
\def\cB{{\mathbin{\mathcal B}}}
\def\cC{{\mathbin{\mathcal C}}}
\def\cD{{\mathbin{\scr D}}}
\def\cDHS{{\mathbin{\scr D}_{\Q HS}}}
\def\cE{{\mathbin{\mathcal E}}}
\def\boE{{\mathbin{\mathbf E}}}
\def\cF{{\mathbin{\mathcal F}}}
\def\boF{{\mathbin{\mathbf F}}}
\def\cG{{\mathbin{\mathcal G}}}
\def\cH{{\mathbin{\mathcal H}}}
\def\cI{{\mathbin{\mathcal I}}}
\def\cJ{{\mathbin{\mathcal J}}}
\def\cK{{\mathbin{\mathcal K}}}
\def\cL{{\mathbin{\mathcal L}}}
\def\cM{{\mathbin{\mathcal M}}}
\def\bcM{{\mathbin{\bs{\mathcal M}}}}
\def\cN{{\mathbin{\mathcal N}}}
\def\cO{{\mathbin{\mathcal O}}}
\def\cP{{\mathbin{\mathcal P}}}
\def\boR{{\mathbin{\mathbf R}}}
\def\cS{{\mathbin{\mathcal S}}}
\def\cT{{\mathbin{\mathcal T}}}
\def\cU{{\mathbin{\mathcal U}}}
\def\cQ{{\mathbin{\mathcal Q}}}
\def\boQ{{\mathbin{\mathbf Q}}}
\def\cW{{\mathbin{\mathcal W}}}
\def\C{{\mathbin{\mathbb C}}}
\def\bQ{{\mathbin{\mathbb Q}}}
\def\bV{{\mathbin{\mathbb V}}}
\def\bE{{\mathbin{\mathbb E}}}
\def\bD{{\mathbin{\mathbb D}}}
\def\boF{{\mathbin{\mathbf F}}}
\def\bF{{\mathbin{\mathbb F}}}
\def\H{{\mathbin{\mathbb H}}}
\def\N{{\mathbin{\mathbb N}}}
\def\Q{{\mathbin{\mathbb Q}}}
\def\R{{\mathbin{\mathbb R}}}
\def\bS{{\mathbin{\mathbb S}}}
\def\Z{{\mathbin{\mathbb Z}}}
\def\sF{{\mathbin{\mathscr F}}}
\def\al{\alpha}
\def\be{\beta}
\def\ga{\gamma}
\def\de{\delta}
\def\io{\iota}
\def\ep{\epsilon}
\def\eps{\epsilon}
\def\la{\lambda}
\def\ka{\kappa}
\def\th{\theta}
\def\ze{\zeta}
\def\up{\upsilon}
\def\vp{\varphi}
\def\si{\sigma}
\def\om{\omega}
\def\De{\de}
\def\La{\Lambda}
\def\Si{\Sigma}
\def\Th{\Theta}
\def\Om{\Omega}
\def\Ga{\Gamma}
\def\Up{\Upsilon}
\def\pd{\partial}
\def\ts{\textstyle}
\def\st{\scriptstyle}
\def\sst{\scriptscriptstyle}
\def\w{\wedge}
\def\sm{\setminus}
\def\bu{\bullet}
\def\op{\oplus}
\def\ot{\otimes}
\def\ov{\overline}
\def\ul{\underline}
\def\bigop{\bigoplus}
\def\bigot{\bigotimes}
\def\iy{\infty}
\def\es{\emptyset}
\def\ra{\rightarrow}
\def\Ra{\Rightarrow}
\def\Longra{\Longrightarrow}
\def\ab{\allowbreak}
\def\longra{\longrightarrow}
\def\hookra{\hookrightarrow}
\def\dashra{\dashrightarrow}
\def\t{\times}
\def\ci{\circ}
\def\ti{\tilde}
\def\d{{\rm d}}
\def\dt{{\rm dt}}
\def\D{{\rm D}}
\def\Lie{{\mathcal{L}}}
\def\ha{{\ts\frac{1}{2}}}
\def\md#1{\vert #1 \vert}
\def\bmd#1{\big\vert #1 \big\vert}
\def\ms#1{\vert #1 \vert^2}
\def\nm#1{\Vert #1 \Vert}
\title{Conically singular Cayley submanifolds II: Desingularisations}
\author{Gilles Englebert}
\date{\today}
\maketitle

\begin{abstract} 
This is the second in a series of three papers working towards constructing fibrations of compact $\Spin(7)$ manifolds by Cayley submanifolds. In this paper we show that a conically singular Cayley submanifold in an almost $\Spin(7)$-manifold can be desingularised by gluing in a rescaled asymptotically conical submanifold of $\R^8$ with the same asymptotic cone. We assume that both the conically singular and the asymptotically conical Cayley submanifold are unobstructed in their respective moduli spaces.
\end{abstract}

\setcounter{tocdepth}{2}
\tableofcontents
\section{Introduction}
In the previous paper \cite{englebertDeformations}, we re-examined the deformation theory of compact Cayley submanifolds and extended it to conically singular as well as asymptotically conical Cayleys. 

The next step is the study of the desingularisation of conically singular Cayleys. Essentially this is asking the question of how a conically singular Cayley relates to nearby compact Cayleys. If $N$ is a conically singular submanifold of an ambient manifold $M$, and $A$ is asymptotically conical in $\R^8$ with the same cone, then one can hope to glue in a truncated version of $A$ at the singular point of $N$ to obtain a non-singular submanifold of $M$. Formally, this new submanifold can be thought of as a connected sum along their shared cone $N \sharp A$. This will however not usually be a calibrated submanifold, and thus another step is necessary. When $A$ is sufficiently close to the cone, and thus its non-conical regime is confined to a suitably small region, the glued manifold will be $C^k$-close to being calibrated. We can then perturb $N \sharp A$ to a nearby calibrated submanifold $\tilde{N}$. The analysis requires the deformation theory of both the asymptotically conical and the conically singular case, which we will revisit in this paper. For special Lagrangians in almost Calabi-Yau manifolds, the desingularisation has been described by Joyce \cite{joyceDesingularisation}. Lotay investigated the case of coassociative submanifolds in $G_2$-manifolds \cite{lotayDesingularizationCoassociative4folds2008}. The aim of this paper is to tackle the desingularisation of Cayleys in almost $\Spin(7)$-manifolds. To be precise, we will prove the following theorem, which also takes into account the presence of multiple singular points and partial desingularisation. 
\begin{thm}[Main theorem]
\label{1_1_Main_theorem}
Let $(M, \Phi)$ be an almost $\Spin(7)$ manifold and $N$ a $\CS_{\bar{\mu}}$-Cayley in $(M, \Phi)$ with singular points $\{z_i\}_{i = 0, \dots, l}$ and rates $1 < \mu_i < 2$, modelled on the cones $C_i = \R_+ \times L_i \subset \R^8$ . Assume that $N$ is unobstructed in $\cM_{\CS}^{\bar{\mu}} (N, \{\Phi\})$. For a fixed $k\le l$, assume for each $i \le k$ that the $L_i$ are unobstructed as associatives (i.e. that the $C_i$ are unobstructed cones), and that $\cD_{L_i} \cap (1, \mu_i] = \emptyset$. For $1 \le i \le k$, suppose that $A_i$ is an unobstructed $\AC_\la$-Cayley with $\la < 1$, such that $\cD_{L_i} \cap [\la, 1) = \emptyset$. Let $\{\Phi_s\}_{s \in \cS}$ be a smooth family of deformations of $\Phi = \Phi_{s_0}$. Then there are open neighbourhoods $U_i$ of $C_i \in  \overline{\cM}_{\AC}^{\la}(A_i)$, an open neighbourhood $s_0 \in U \subset \cS$  and a continuous map:
\e
\Ga: U \times \cM_{\CS, \text{cones}}^{\bar{\mu}} (N, \{\Phi\}) \times \prod_{i=1}^k U_i \longra \bigcup_{I \subset \{1, \dots, k\}} \cM_{\CS}^{\bar{\mu}_I}(N_I , \cS).
\e
Here we denote by ${\bar{\mu}_I}$ the subsequence, where we removed the $i$-th element for $i\in I$ from ${\bar{\mu}}$. Moreover, $N_I$ denotes the isotopy class of the manifold obtained after desingularising the points $z_i$ for $i \in I$ by a connected sum with $A_i$. 

This map is a local diffeomorphism of stratified manifolds. Thus away from the cones in $\overline{\cM}_{\AC}^{\la}(A_i)$ it is a local diffeomorphism onto the non-singular Cayley submanifolds. It maps the point $(s, \tilde{N}, \tilde{A}_1, \dots, \tilde{A}_k)$  into $\cM_{\CS}^{\bar{\mu}_I}(N_I , \cS)$, where $I$ is the collection of indices for which  $\tilde{A}_i = C_i$. This corresponds to partial desingularisation.
\end{thm}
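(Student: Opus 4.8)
The plan is to prove this gluing theorem by the standard ``pregluing + Banach fixed point / implicit function theorem'' scheme, adapted to the stratified setting. First I would construct the family of approximately Cayley submanifolds $N_I \sharp_t (\coprod_{i\in I^c} A_i)$ explicitly: fix a small gluing parameter $t>0$, truncate the asymptotically conical $A_i$ at radius $\sim t^{-1}$ (or the conically singular $N$ at radius $\sim t$) in the common cone coordinate of $C_i$, rescale $A_i$ by $t$ so that it fits into a normal neighbourhood of $z_i$, and interpolate between the rescaled $A_i$ and the cone, and between the cone and $N$, using cutoff functions supported in an annular ``neck'' region. The key input here is that $\cD_{L_i}\cap[\la,1)=\emptyset$ and $\cD_{L_i}\cap(1,\mu_i]=\emptyset$, which guarantees — via the asymptotic expansions from \cite{englebertDeformations} — that the deviation of $A_i$ from $C_i$ decays like $r^{\la}$ with $\la<1$, and the deviation of $N$ from $C_i$ grows like $r^{\mu_i}$ with $\mu_i>1$, so that in the neck the pasted submanifold is $C^k$-close to being calibrated, with error $O(t^{\min(\mu_i-1,\,1-\la)})$ in suitable weighted norms. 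I would record this as an estimate $\nm{\tau(N_I\sharp_t \cdots)}_{C^{k,\al}_\nu} \le C t^{\kappa}$ for the ``torsion''/mean-curvature-type obstruction to being Cayley, with $\kappa>0$.

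Next I would set up the deformation operator on the glued manifold. By the deformation theory recalled from the first paper, an embedded submanifold $C^1$-close to $N_I\sharp_t(\coprod A_i)$ corresponds to a normal vector field $v$, and being Cayley is equivalent to a nonlinear first-order elliptic equation $F_t(v) = 0$ of the schematic form $D_t v + Q_t(v) = -\tau_t$, where $D_t$ is the (twisted Dirac-type) linearised operator on the glued manifold and $Q_t$ is the quadratic-and-higher remainder. The crucial analytic step is a uniform-in-$t$ right inverse (or Fredholm) estimate for $D_t$ on appropriate weighted Hölder or Sobolev spaces: one builds an approximate right inverse for $D_t$ by patching the right inverses for the model operators on $N$ (coming from unobstructedness in $\cM^{\bar\mu}_{\CS}$, together with the condition $\cD_{L_i}\cap(1,\mu_i]=\emptyset$ ensuring no cokernel is created at the relevant rate) and on each $A_i$ (coming from unobstructedness in $\cM^{\la}_{\AC}$), using the same cutoffs as in the pregluing. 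The finite-dimensional ``parameters'' — the position in $U_i\subset\overline{\cM}^\la_{\AC}(A_i)$, the point in $\cM^{\bar\mu}_{\CS,\mathrm{cones}}(N,\{\Phi\})$, and $s\in U\subset\cS$ — are exactly the cokernel/moduli directions that must be added to make $D_t$ (plus these parameter derivatives) uniformly surjective; unobstructedness means there is no leftover obstruction space. Then a contraction-mapping argument on a ball of radius $\sim t^{\kappa'}$ in the weighted space solves $F_t = 0$, producing for each choice of parameters a genuine $\CS_{\bar\mu_I}$-Cayley $\tilde N$, and uniqueness in the small ball gives that the resulting map $\Ga$ is well defined and continuous.

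Finally, to upgrade continuity to ``local diffeomorphism of stratified manifolds'' I would identify the domain and target as stratified by the subset $I$ of points that remain singular (equivalently, by which $\tilde A_i$ equal the cone $C_i$), note that on each stratum the construction above is smooth in the parameters by the implicit function theorem applied to $F_t$ (which depends smoothly on all data), and that the dimension count matches: the expected dimension of $\cM^{\bar\mu_I}_{\CS}(N_I,\cS)$ equals $\dim\cS + \dim\cM^{\bar\mu}_{\CS,\mathrm{cones}}(N,\{\Phi\}) + \sum_{i\in I^c}\dim\overline{\cM}^\la_{\AC}(A_i)$ by the index-gluing formula from the deformation theory (the change of rates from $\bar\mu$ to $\bar\mu_I$ and the jumps at $1\in[\la,1)\cup(1,\mu_i]$ being empty is what makes this additive). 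Compatibility of the strata — that $\Ga$ sends the closure of the open stratum to the lower strata correctly, matching partial desingularisation — follows because sending $\tilde A_i\to C_i$ in $\overline{\cM}^\la_{\AC}(A_i)$ corresponds in the gluing to letting the neck degenerate, which continuously interpolates to the construction with $z_i$ left singular. I expect the main obstacle to be the \emph{uniform invertibility of $D_t$ in weighted norms across the neck}: one must choose the weight $\nu$ (and rate conventions) so that it is simultaneously admissible for the conically singular operator on $N$ at rate close to $\mu_i$, for the AC operator on $A_i$ at rate close to $\la$, and for the cylindrical/scaling model on the neck, while keeping the norms of the patched approximate inverse bounded independently of $t$; this is precisely where the hypotheses $\la<1<\mu_i$ and $\cD_{L_i}\cap[\la,1)=\cD_{L_i}\cap(1,\mu_i]=\emptyset$ are used, and getting the bookkeeping of critical rates and cutoff error terms right is the technical heart of the argument.
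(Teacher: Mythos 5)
Your proposal follows essentially the same route as the paper: construct the pregluing $N^{\bar A}$ by cutting off $A_i$ and $N$ along the common cone, establish uniform estimates (initial error $\|F_{\bar A}(0)\|\lesssim t^{\nu(\mu-\delta)}$, a uniform bound on the inverse of $D_{\bar A}$ built by patching the $\CS$ and $\AC$ pseudo-kernels/right inverses with cutoffs whose error is $O(1/\log t)$, and a quadratic estimate), then run a contraction/iteration argument to solve $F_{\bar A}(v)=0$, and finally track the strata and dimensions via the index-gluing and unobstructedness hypotheses. The one point worth sharpening is the weight convention: the paper works with a single weight $\delta$ chosen in $(1,\tfrac{\mu(\lambda-2)+1}{\lambda-\mu})$ rather than ``close to $\lambda$'' for the $\AC$ piece, and the hypotheses $\cD_{L_i}\cap[\lambda,1)=\cD_{L_i}\cap(1,\mu_i]=\emptyset$ are used precisely to move the rate from $\lambda$ (resp. $\mu_i$) to $\delta$ without changing the Fredholm index, so that the deformations split cleanly across the pseudo-kernel $\ka_{\bar A}=\ka_{\CS}\oplus\ka_{\AC,\bar A}$; you correctly flag this bookkeeping as the technical heart, and this is exactly where the paper spends its effort.
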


We will follow a procedure similar to the ones in the two previously mentioned papers \cite{joyceDesingularisation} and \cite{lotayDesingularizationCoassociative4folds2008}, with two major differences. First, coassociative submanifolds and special Lagrangians tend to be unobstructed, and thus form smooth moduli spaces. This is not necessarily the case for Cayley submanifolds. We therefore need to restrict our results to unobstructed Cayleys, i.e. Cayleys which are have a surjective deformation operator, and are consequently smooth points in their moduli space. Secondly, it is not immediately clear what the generalisation of the deformation operator of a Cayley to a non-Cayley should be. We therefore carefully revisit the construction of the usual deformation operator and show how to extend it to 4-folds which are $C^1$-close to being Cayley, so called almost Cayleys. 

The motivation behind this theorem is the construction of fibrations of compact $\Spin(7)$-manifolds by Cayleys, in analogy to the programme by Kovalev in the $G_2$-case \cite{KovalevFibration}. A topological argument shows that a fibration of a compact $G_2$ manifold of full holonomy by compact coassociatives (the analogue of Cayleys in the $G_2$ setting) must necessarily include singular fibres, but there is hope that the singular fibres may admit only conical singularities. We do not currently have a proof of such a result in the $\Spin(7)$ case, however we expect a similar statement to hold. The desingularisation theory of our Cayley fibres and results such as our main Theorem \ref{1_1_Main_theorem} then allow us to investigate the deformation theory of fibrations of this type and deduce properties such as stability (in a suitable sense) under the deformation of the $\Spin(7)$-structure. 

\subsubsection*{Notation}
We will denote by $C$ an unspecified constant, which may refer to different constants within the same derivation. To indicate the dependence of this constant on other variables $x, y, \dots$, we will write it as $C(x, y, \dots)$. Similarly, if an inequality holds up to an unspecified constant, we will write $A \lesssim B$ instead of $A \le C B$.
We denote by $T^p_qM = (TM)^{\ot^p} \ot (T^*M)^{\ot^q} $ the bundle of $(p,q)$-tensors.

\subsubsection*{Acknowledgements}
This research has been supported by the Simons Collaboration on Special Holonomy in Geometry, Analysis, and Physics. I want to thank my DPhil supervisor Dominic Joyce for his excellent guidance during this project.

\section{Preliminaries}

To begin, we recall the definitions from the theory of $\Spin(7)$-manifolds and their Cayley submanifolds which are relevant to our discussion, as well as the deformation theory of compact and conically singular Cayley submanifolds.

\subsection{\texorpdfstring{Geometry of $\Spin(7)$-manifolds}{Geometry of Spin(7)-manifolds}}

For a more in depth discussion of $\Spin(7)$ and Cayley geometry we refer to \cite{HarvLaws} and \cite{mcleanDeformationsCalibratedSubmanifolds1998}. Recall that the group $\Spin(7)$ is the double cover of $\SO(7)$. It can alternatively be defined as the stabilizer of the \textbf{Cayley four form}:
\ea
\label{2_1_cayley_form}
\Phi_0 = \d x_{1234} - \d x_{1256} - \d x_{1278} - \d x_{1357} + \d x_{1368} - \d x_{1458} - \d x_{1467} \nonumber \\
- \d x_{2358} - \d x_{2367} + \d x_{2457} - \d x_{2468} - \d x_{3456} - \d x_{3478} + \d x_{5678}
\ea 
under the action of $\SO(8)$ on forms in $\La^4 \R^8$. We say that a pair $(M, \Phi)$, where $M$ is an $8$-dimensional manifold and $\Phi \in \Om^4 (M)$, is a $\Spin(7)$-manifold if at each point $p \in M$ there is an isomorphism $T_pM \simeq \R^8$ that takes $\Phi_p$ to $\Phi_0$. The Cayley form $\Phi$ induces a Riemannian metric $g_\Phi$ on $M$. If $\d \Phi = 0$ we say that the $\Spin(7)$-manifold is \textbf{torsion-free}. We then have that the holonomy $\Hol(g_\Phi)\subset \Spin(7)$. While determining whether an $8$-manifold is $\Spin(7)$ is purely topological in nature, finding a torsion-free $\Spin(7)$ structure is hard, comparable to finding integrable complex structures on almost complex manifolds.

Let $N^4 \subset M$ be any immersed submanifold. The Cayley form $\Phi$ then satisfies the Cayley inequality:
\e
\Phi|_N \le \dvol_{N}.
\e
Here $\dvol_N$ is the volume form induced by the metric $g_{\Phi}$. We say that a manifold $N$ is \textbf{Cayley} if $\Phi|_N = \dvol_N$, and that it is $\al$-\textbf{Cayley} for ($\al \in (0,1)$) if instead  $\Phi|_N \ge  \al\dvol_{N}$ (see also section 3 of \cite{englebertDeformations}). If we know that a submanifold is $\al$-Cayley for $\al$ close to $1$, we can hope that there is a true Cayley submanifold $C^1$-close to it. 

In $(\R^8, \Phi_0)$, the Cayley four planes form a $12$ dimensional subset of the $16$ dimensional Grassmannian of oriented four planes. Thus the Cayley condition can be seen as four independent equations. More precisely it can be described by the vanishing of a four-form $\tau \in \La^4 \R^8 \otimes E_{\cay}$, i.e. a plane $\pi = \spn\{e_1, e_2, e_3, e_4\}$ is Cayley exactly when $\tau(e_1, e_2, e_3, e_4) = 0$. Here the vector space $E_{\cay}$ can be thought of as the normal space of a Cayley as an element of the submanifold of Cayleys in $\Gr_+(4,8)$. From this form we get a deformation operator associated to an almost Cayley submanifold $N$. For $\al$ sufficiently close to $1$ we define:
\ea
\label{2_1_deformation_op}
\begin{array}{rl}
F: C^\infty (N,V) & \longrightarrow C^\infty(E_{\cay}) \\
v &\longmapsto \pi_E(\star_N \exp^*_v(\tau|_{N_v})) .
\end{array}
\ea
Here $V \subset \nu(N)$ is an open  neighbourhood of the zero section in the normal bundle of $N$. Moreover, $\exp_v: N \hookra M$ denotes the exponential of the vector field $v \in \nu(N)$. We denote the image of this map by $N_v$. The precise definition can be found in \cite[Section 3.1]{englebertDeformations}. Important for us is that this operator does indeed detect Cayley submanifolds and is elliptic at its zeroes, as in the following Proposition:

\begin{prop}[{Prop 3.3 and 3.5 in \cite{englebertDeformations}}]
\label{2_1_deformation_op_prop}
Let $(M, \Phi)$ be a $\Spin(7)$ manifold such that the Riemann tensor satisfies $\md{R} < C_R$. Let $\al < 1$ be sufficiently close to $1$. Then there is a constant $\eps > 0$ such that the following holds for any $\al$-Cayley $N$. If $v \in C^\infty(\nu(N))$ is such that for all $p \in N$:
\e
	\md{v(p)} < \frac{\ep}{\min\{\md{R(q)}: q \in B(p, \md{v(p)})\}},  \md{\nabla v(p)} < \eps,
\e
then $N_v$ is Cayley exactly when $F(v) = 0$. Furthermore for such $v$, the non-linear operator $F$ is elliptic at $v$.
\end{prop}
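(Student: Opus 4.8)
The plan is to reduce both assertions to one geometric fact: under the stated smallness hypotheses $N_v$ is a well-defined embedded submanifold all of whose tangent planes lie in a prescribed neighbourhood $\cU$ of the Cayley Grassmannian $\Gr_{\cay} \subset \Gr_+(4,8)$, with $\cU$ shrinkable at will by taking $\eps$ small and $\al$ close to $1$. First I would check that $\exp_v$ is an embedding: the bound $\md R < C_R$ yields, by comparison geometry, a lower bound on the conjugate radius, and combined with the curvature-weighted hypothesis $\md{v(p)} < \eps/\min\{\md{R(q)} : q \in B(p, \md{v(p)})\}$ this confines $N_v$ to a tubular neighbourhood of $N$ on which $\exp$ restricts to a diffeomorphism. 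The bound $\md{\nabla v(p)} < \eps$ then controls $T_pN_v$ in terms of $T_pN$; since $N$ is $\al$-Cayley, $N_v$ is $\al'$-Cayley with $\al' = \al'(\al, \eps, C_R) \to 1$ as $\eps \to 0$, $\al \to 1$, and — an $\al'$-Cayley plane making angle $O(\sqrt{1-\al'})$ with $\Gr_{\cay}$ — all tangent planes of $N_v$ lie in $\cU$.

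For the detection statement, by definition of $\tau$ a $4$-plane $\pi$ is Cayley iff $\tau(\pi) = 0$, so $N_v$ is Cayley iff the $E_{\cay}$-valued $4$-form $\tau|_{N_v}$ vanishes identically, equivalently $\star_N \exp_v^*(\tau|_{N_v}) \equiv 0$ (using that $\exp_v$ is a diffeomorphism onto $N_v$). This gives the easy direction $N_v$ Cayley $\Rightarrow F(v) = 0$ at once. For the converse, the key is that $F$ is arranged so that, once the tangent planes of $N_v$ lie in $\cU$, the section $\star_N\exp_v^*(\tau|_{N_v})$ takes values in a subbundle of the relevant $E$-bundle on which the orthogonal projection $\pi_E$ restricts to an isomorphism — this is the infinitesimal transversality of $\tau$ to the zero section along $\Gr_{\cay}$, whose normal derivative is identified, after the standard bundle identifications, with $\pi_E$. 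Hence $F(v) = \pi_E(\star_N \exp_v^*(\tau|_{N_v})) = 0$ forces $\star_N\exp_v^*(\tau|_{N_v}) \equiv 0$, i.e.\ $\tau|_{N_v}\equiv 0$, i.e.\ $N_v$ is Cayley.

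For ellipticity at $v$, I would compute the principal symbol $\si_\xi(\d F_v)$ of the linearisation $\d F_v : C^\infty(\nu(N)) \to C^\infty(E_{\cay})$ at $p \in N$, $\xi \in T_p^*N \setminus \{0\}$, and observe that it depends continuously on the $1$-jet of $v$ at $p$ and on the ambient data of $M$ along $N_v$ near $p$. At $v = 0$ on a genuine Cayley, $\d F_0$ is McLean's twisted Dirac-type operator, whose symbol is (up to the bundle identifications) Clifford multiplication, hence invertible for every $\xi \neq 0$ with $\md{\si_\xi(\d F_0)^{-1}} \le C\md{\xi}^{-1}$ for a constant $C$ coming only from the flat model. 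Since invertibility of a linear map between bundles of equal rank is an open condition, there is $\eps > 0$ depending only on $C$ and $C_R$ such that $\md{v(p)}, \md{\nabla v(p)} < \eps$ (the curvature weighting absorbing the dependence on the ambient geometry along $N_v$) keeps $\si_\xi(\d F_v)$ invertible for all $\xi \neq 0$, so $F$ is elliptic at $v$.

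The main obstacle I anticipate is the uniformity of $\eps$: it must depend only on $C_R$ and $\al$ and work for \emph{all} $\al$-Cayleys $N$ simultaneously — in particular for non-compact $N$, such as conically singular or asymptotically conical ones, where one cannot merely appeal to compactness of the unit cosphere bundle of $N$. The resolution is to derive the uniform "angle to $\Gr_{\cay}$" bound and the uniform symbol-invertibility estimate from the fixed asymptotic model (the cone), on which the relevant quantities are scale invariant, and to glue this with the interior estimate; reconciling the pointwise curvature-weighted bound on $\md v$ with the pointwise control needed both for the embedding property of $\exp_v$ and for the symbol perturbation is where the real work lies.
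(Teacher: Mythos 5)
This proposition is quoted from Propositions 3.3 and 3.5 of \cite{englebertDeformations} and is not proved in the present paper, so there is no in-text argument to compare against line by line; your outline follows what is essentially the standard (and presumably Part I's) route: the forward implication is immediate from the fact that $\tau$ vanishes exactly on Cayley planes, the converse rests on the transversality of $\tau$ along the Cayley Grassmannian together with $C^1$-control forcing the tangent planes of $N_v$ into a small neighbourhood of $\Gr_{\cay}$, and ellipticity follows by treating the symbol of $\D F_v$ as a small perturbation of the Clifford-multiplication symbol of the Dirac-type operator in the flat model.

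Three points to tighten. First, your claim that $\md{R}<C_R$ together with the weighted bound on $\md{v}$ makes $\exp_v$ a diffeomorphism onto a tubular neighbourhood is both unjustified (curvature bounds control the conjugate radius but not the injectivity radius, so $\exp_v$ may fail to be injective) and unnecessary: the relevant moduli spaces consist of immersed Cayleys, and for both the detection statement and ellipticity it suffices that $\exp_v$ be an immersion with controlled $1$-jet, which the hypotheses do give. Second, the assertion that $\star_N\exp_v^*(\tau|_{N_v})$ takes values in a subbundle on which $\pi_E$ restricts to an isomorphism is, read literally, false --- the values lie only approximately in $E_{\cay}$; what transversality of $\tau$ along $\Gr_{\cay}$ actually provides, and what the converse needs, is the quantitative estimate $\md{\pi_E(\tau(\pi))}\ge c\,\md{\tau(\pi)}$ for all planes $\pi$ in a sufficiently small neighbourhood of $\Gr_{\cay}$, applied after identifying the fibres at $\exp_p(v(p))$ with those at $p$ by parallel transport along the connecting geodesic; the distortion of that identification is precisely where the curvature-weighted bound on $\md{v}$ is used. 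Third, the uniformity over all (possibly non-compact) $\al$-Cayleys that you single out as the main obstacle is in fact immediate: every estimate in the argument --- the angle bound placing $T N_v$ near $\Gr_{\cay}$, the constant $c$ above, and the symbol perturbation --- is pointwise, depending only on $T_pN$, $v(p)$, $\nabla v(p)$ and the curvature along the geodesic from $p$ to $\exp_p(v(p))$, so no appeal to compactness of $N$ or to the asymptotic cone model is required.
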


\subsection{Manifolds with ends}
We now briefly recall the definitions of asymptotically conical and conically singular manifolds. More details can be found in \cite[Section 2.3]{englebertDeformations}. Recall that an \textbf{asymptotically conical} manifold of rate $\eta < 1$  ($\AC_\eta$) is a Riemannian manifold $(M,g)$ such that away from a compact subset $K \subset M$ we can identify $M \setminus K \simeq (r_0, \infty) \times L$ and the metric satisfies: 
\e
\md{\nabla^i(g-g_{\con})} = O(r^{\eta- 1-i}) \text{ as } r \ra \infty,\label{2_1_ac_conv}
\e
where $g_{\con} = \d r^2 +r^2h$ is a conical metric on $(r_0, \infty) \times L$. For a embedded submanifold $f: A \hookra (\R^n, g)$, where $g$ asymptotically conical of rate $\eta < 1$ and asymptotic to flat $\R^n$, we say that it is an \textbf{asymptotically conical submanifold} of rate $\eta < \la < 1$  if: 
\ea
\md{\nabla^i(f(r,p)-\io(r,p))} \in O(r^{\la-i}), \text { as } r \ra \infty. \label{2_3_extrinsic_ac}
\ea
Here $\io: C \hookra \R^n$ is the embedding of the uniquely determined asymptotic cone of $A$. 

Finally, we say that a continuously embedded topological space $N \subset (M, \Phi)$ is \textbf{conically singular} with rates $\bar{\mu} = (\mu_1, \dots, \mu_l)$, where $1 <\mu_j<2$ ($\CS_{\bar{\mu}}$) asymptotic to cones $C_1, C_2, \cdots, C_l$ if it is a smoothly embedded manifold away from $l$ points $\{z_1, \dots, z_l\}$, and there are parametrisations $\Th_j: (0, R_0) \times L_j \ra M$ of $N$ near $z_j$ such that: 
\e
\md{\nabla^i(\Th_j(r,p)-\io_j(r,p))} \in O(r^{\mu_j-i}), \text { as } r \ra 0.
\e
Here $\io_j$ is the embedding of the cone $C_i$ via a parametrisation $\chi_j$ of $M$ that is compatible with the $\Spin(7)$-structure, i.e. $\D \chi_j^* \Phi(z_j) = \Phi_0$.  In both the $\AC$ and $\CS$ case, if a submanifold is Cayley, then it must be asymptotic to Cayley cones.

We now recall briefly the theory of Sobolev spaces as well as the Fredholm theory on conical manifolds. First of all, let $M$ be an $n$-dimensional manifolds with $l$ conical ends and $E$ a bundle of tensors on $M$. For a collection of weight $\bar{\de} \in \R^l$  and a section $s \in C^\infty_c(E)$ we define the Sobolev norm: 
\e
\nm{s}_{p, k, \bar{\de}} = \left(\sum_{i = 0}^k \int_{M} \md{\nabla^i s\rho^{-w+i}}^p \rho^{-n} \d \mu\right)^\frac{1}{p},
\e
Here $w$ is a weight function that interpolates between the conical ends. On the $j$-th conical end it is given by $\de_i$. We denote the completion of $C^\infty_c(E)$ under this norm by $L^p_{k, \bar{\de}}$. Note that the Banach space structure is independent of the choice of weight function. If there is only one end, we denote the space by $L^p_{k, \de}$. The $C^k_{\bar{\de}}$ spaces are defined as the completion with regards to the norm: 
\e
\nm{s}_{C^k_{\bar{\de}}} = \sum_{i = 0}^k \md{\nabla^i s\rho^{-w+i}}.
\e
There is a Sobolev embedding theorem: 

\begin{thm}[{\cite[Thm 4.8]{lockhartFredholmHodgeLiouville1987}}]
\label{2_3_Sobolev_embedding_acyl_ac}
Suppose that the following hold: 
\begin{itemize}
\item[i)] $k - \tilde{k} \ge n\left(\frac{1}{p}-\frac{1}{\tilde{p}}\right)$ and either:
\item[ii)] $ 1 < p \le \tilde{p} < \infty$ and $\tilde{\de} \ge \de$ \text{(}$\AC$\text{)} or $\tilde{\de} \le \de$ ($\CS$)
\item[ii')]$ 1 <  \tilde{p} < p < \infty$ and $\tilde{\de} > \de$ \text{(}$\AC$\text{)} or $\tilde{\de} < \de$ ($\CS$)
\end{itemize}
Then there is a continuous embedding: 
\e
L^p_{k, \de}(E) \longra L^{\tilde{p}}_{\tilde{k}, \tilde{\de}}(E) 
\e
\end{thm}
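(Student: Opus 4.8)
The plan is to reduce the weighted Sobolev embedding on a manifold with conical ends to the classical Sobolev embedding on cylinders, exploiting the fact that a conical end $(0, R_0) \times L$ with metric $\d r^2 + r^2 h$ becomes, after the substitution $t = -\log r$ (in the $\CS$ case) or $t = \log r$ (in the $\AC$ case), a cylindrical end $(T, \infty) \times L$ with a metric that is uniformly equivalent, on each unit interval, to the product metric $\d t^2 + h$. First I would handle the interior: away from the ends, $M$ restricted to a compact piece with a collar is an ordinary compact manifold with boundary, on which the stated inequality $k - \tilde k \ge n(1/p - 1/\tilde p)$ together with $p \le \tilde p$ is exactly the hypothesis of the classical Sobolev embedding $L^p_k \hookra L^{\tilde p}_{\tilde k}$, and the weight function $\rho$ is bounded above and below there so the weighted and unweighted norms are comparable. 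Thus the whole content is on the ends, and by working one end at a time it suffices to treat a single conical end with a single weight $\de$.

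On that end I would pass to the cylinder and decompose it into the countable union of \textbf{bands} $B_m = [m, m+1] \times L$, $m \ge T$. On each band the cylindrical metric is uniformly equivalent to the product metric with constants independent of $m$, so the classical Sobolev embedding holds on each $B_m$ with a uniform constant $C$ depending only on $L$, $n$, $p$, $\tilde p$, $k$, $\tilde k$; this gives $\nm{s}_{L^{\tilde p}_{\tilde k}(B_m)} \le C \nm{s}_{L^p_k(B_m)}$. The next step is to track the weights: on the band $B_m$ the weight factor $\rho^{-w}$ (together with the $\rho^{-n}\d\mu$ twisting that encodes the conical volume growth and the rescaling of $\nabla^i s$ by $\rho^i$) is comparable, up to uniform constants, to the single scalar $e^{-m\de}$ in the $\CS$ picture (respectively $e^{m\de}$ in the $\AC$ picture), because $\rho \sim r = e^{-t}$ varies by only a bounded factor across a band. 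Hence, setting $a_m := \nm{s\,\rho^{-w}}_{L^p_k(B_m)}$ in the natural band-normalised sense, the global weighted norms become weighted $\ell^p$ and $\ell^{\tilde p}$ sums: $\nm{s}_{L^p_{k,\de}}^p \asymp \sum_m a_m^p$ and $\nm{s}_{L^{\tilde p}_{\tilde k,\tilde\de}}^{\tilde p} \asymp \sum_m (e^{m(\de - \tilde\de)} b_m)^{\tilde p}$ where $b_m \le C a_m$ by the band estimate — one must be careful that the difference of weights $\de - \tilde\de$ appears with the correct sign dictated by whether we are in the $\AC$ or $\CS$ end, which is precisely what distinguishes hypotheses (ii) and (ii').

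To finish I would invoke an elementary $\ell^p \hookra \ell^{\tilde p}$ type inequality. In case (ii), $p \le \tilde p$ and the weight difference is nonpositive (in the relevant sense), so the factor $e^{m(\de - \tilde\de)}$ is bounded by $1$ and $\sum_m (b_m)^{\tilde p} \le C \sum_m (a_m)^{\tilde p} \le C(\sum_m a_m^p)^{\tilde p/p}$ using $\ell^p \subset \ell^{\tilde p}$ for $p \le \tilde p$; this yields $\nm{s}_{L^{\tilde p}_{\tilde k,\tilde\de}} \lesssim \nm{s}_{L^p_{k,\de}}$. In case (ii'), $\tilde p < p$ but now the weight difference is \emph{strictly} of the favourable sign, so $e^{m(\de-\tilde\de)}$ decays geometrically; then Hölder's inequality with exponents $p/\tilde p$ and its conjugate absorbs the geometric factor and still gives a finite bound: $\sum_m (e^{m(\de-\tilde\de)} b_m)^{\tilde p} \le (\sum_m e^{m(\de-\tilde\de)q\tilde p})^{1/q}(\sum_m b_m^p)^{\tilde p/p}$ with $q$ conjugate to $p/\tilde p$, and the first factor converges by strictness. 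I expect the main obstacle to be purely bookkeeping: getting the exponents of $\rho$ right in the definition of the band-normalised quantities $a_m, b_m$ so that the volume form $\rho^{-n}\d\mu$, the $i$ derivatives each contributing $\rho^i$, and the weight $\rho^{-w}$ all combine into the clean scalar $e^{\pm m\de}$, and checking that the sign of $\de - \tilde\de$ needed for convergence matches exactly the $\AC$/$\CS$ dichotomy in the statement — there is no deep analytic difficulty once the reduction to bands on the cylinder is set up, since the local Sobolev embedding on a fixed compact band is classical.
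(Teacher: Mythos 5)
The paper does not actually prove this statement — it is quoted verbatim from Lockhart \cite{lockhartFredholmHodgeLiouville1987}, Thm 4.8 — so there is no internal proof to compare against; your argument is essentially the standard proof of that cited result and is sound: conformal passage to the cylinder, decomposition into bands with uniform Sobolev constants, the weights collapsing to a single factor $e^{\pm m\de}$ per band, then $\ell^p \subset \ell^{\tilde p}$ in case (ii) and H\"older against a geometrically convergent series in case (ii'), with the $\AC$/$\CS$ sign dichotomy entering exactly as you describe. The only caveat worth recording is that your band embedding $L^p_k(B_m) \hookra L^{\tilde p}_{\tilde k}(B_m)$ (in case (ii') via finiteness of the band volume) implicitly needs $k \ge \tilde k$, which is part of the standing hypotheses of the cited theorem even though it is not spelled out in the statement as reproduced here.
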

We call a linear differential operator $D$ that is asymptotically compatible with the conical structure a \textbf{conical operator}. This means that the coefficients of $D$ tend towards the coefficients of a rescaling invariant operator on the asymptotic cone. For a precise definition we refer to \cite[Section 2.3.3]{englebertDeformations}. It is of rate $\nu \in \R$ if $D [ C^{\infty}_{\de}] \subset C^{\infty}_{\de-\nu}$. Such operators define continuous operators between conical Banach spaces, and enjoy a good Fredholm theory.

\begin{thm}
\label{2_1_change_of_index}
Let $D$ be a conical operator of order $r\ge 0$ and rate $\nu \in \R$. Let $1<p<\infty$ and $k\ge 0$. Then it defines a continuous map: 
\eas
D: L^p_{k+r, \de}(E) \longra L^p_{k, \de-\nu}(E).
\eas
If $D$ is elliptic, then this map is Fredholm for $\de$ in the complement of a discrete subset $\cD \subset \R$. This subset is determined by en eigenvalue problem on the asymptotic link.

Denote by $i_\de(P)$ for $\de \in \R \setminus \cD$ the index of the operator $D: L^p_{k+r, \de}(E) \ra L^p_{k, \de-\nu}(E)$. We then have that: $i_{\de_2}(P)-i_{\de_2}(P) = \sum_{\la \in (\de_1, \de_2) \cap \cD} d(\la)$. Here $ d(\la)$ is determined solely by the asymptotic link. 
\end{thm}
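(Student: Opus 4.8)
The plan is to run the standard Lockhart--McOwen/Melrose machine for elliptic operators on manifolds with conical (equivalently, cylindrical) ends, in the normalisation used here. Continuity of $D\colon L^p_{k+r,\de}(E)\ra L^p_{k,\de-\nu}(E)$ is a direct weight count from the definition of a conical operator of order $r$ and rate $\nu$: near each end its coefficients, measured in a frame adapted to the cone metric, are controlled so that $D$ lowers the decay weight by exactly $\nu$ at the cost of $r$ derivatives, which is precisely what the map $L^p_{k+r,\de}\ra L^p_{k,\de-\nu}$ records, while on the compact core it is the ordinary $L^p$-boundedness of an order-$r$ differential operator. The real content is local to the ends, so after fixing an end I pass to cylindrical coordinates $\rho=e^{\pm t}$ (the sign according to the $\AC$ or $\CS$ case) and conjugate $D$ by a suitable power of $\rho$; the model operator $D_\infty$ attached to $D$ on the exact cone $\R_+\times L$ then becomes a translation-invariant elliptic operator on the cylinder $\R\times L$, and a further conjugation by a power of $\rho$ turns $L^p_{k,\de}$ into an unweighted Sobolev space.

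Taking the Fourier--Laplace (equivalently, Mellin) transform in $t$ produces the indicial family $D(\zeta)$, a holomorphic family of elliptic operators of order $r$ on the closed manifold $L$ whose coefficients are built only from $(L,h)$ and the symbol of the model operator. Each $D(\zeta)$ is Fredholm with index independent of $\zeta$; a standard parameter-ellipticity argument gives invertibility whenever $\md{\Im\zeta}$ is large with $\Re\zeta$ bounded, so this index is $0$ and, by the analytic Fredholm theorem, the set of $\zeta$ at which $D(\zeta)$ is not invertible is discrete in $\C$ with only finitely many points in any vertical strip. We let $\cD\subset\R$ be the set of real parts of these indicial roots (shifted by the constant coming from the conjugation); it depends only on the link $(L,h)$. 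For $\de\notin\cD$ the conjugated model operator is invertible on the cylinder --- invert $D(\zeta)$ along the vertical line attached to the weight $\de$ and transform back --- and patching this model inverse on the ends with an interior parametrix on the compact core, via cut-off functions, produces a global parametrix $Q$ with $QD-\id$ and $DQ-\id$ factoring through a more regular and more strongly weighted Sobolev space; these remainders are therefore compact (a weighted Rellich phenomenon, companion to Theorem~\ref{2_3_Sobolev_embedding_acyl_ac}), so $D_\de$ is Fredholm. Elliptic bootstrapping with the weighted estimates then shows kernel elements lie in $C^\infty_\de$.

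For the index-change formula, fix $\de_1<\de_2$. One first shows that $\de\mapsto i_\de(P)$ is constant on each connected component of $\R\setminus\cD$: the weighted spaces for $\de_1$ and $\de_2$ are nested, and a solution lying in the larger one has, near each end, a partial polyhomogeneous expansion in powers $\rho^{\la}$ (possibly with $\log\rho$ factors) indexed by indicial roots $\la$, obtained by iterating the model equation through the Mellin transform; if no indicial value lies strictly between $\de_1$ and $\de_2$ the expansion carries no term in that range, so the solution already lies in the smaller space and $\Ker D_{\de_1}=\Ker D_{\de_2}$. The formal adjoint $D^{*}$ is again a conical elliptic operator with $\Coker D_\de\cong\Ker D^{*}$ on the dual weighted space, so the same argument keeps the cokernel fixed and the index is unchanged. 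When $\de$ crosses a single indicial value $\la\in\cD$, let $\mathcal{R}_\la$ denote the finite-dimensional space of solutions of the homogeneous model equation $D_\infty u=0$ on $\R_+\times L$ of the form $u=\rho^{\la}$ times a polynomial in $\log\rho$ whose coefficients are smooth sections over $L$, and set $d(\la)=\dim\mathcal{R}_\la$, a quantity manifestly depending only on $(L,h)$. The map sending a solution in the larger weighted space to the $\rho^{\la}$-part of its expansion fits into a short exact sequence relating $\Ker D_{\de_2}/\Ker D_{\de_1}$, $\Coker D_{\de_1}/\Coker D_{\de_2}$ and $\mathcal{R}_\la$, which yields $i_{\de_2}(P)-i_{\de_1}(P)=d(\la)$; summing over the finitely many indicial values in $(\de_1,\de_2)$ gives the stated formula.

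The main obstacle is the polyhomogeneous-expansion statement used above: that every element of the weighted kernel genuinely has an asymptotic expansion at the cone whose exponents are indicial roots, with leading coefficient an element of $\mathcal{R}_\la$. This is where one needs the Mellin transform together with a careful contour-shifting argument --- moving the line of integration past $\la$ and estimating the residue --- with uniform control of $D(\zeta)^{-1}$ along vertical lines avoiding $\cD$; equivalently, an iteration scheme that solves away the successive terms of the expansion. Once this regularity is in hand the exact-sequence bookkeeping is formal. Since the statement is standard, one may instead simply invoke it, following \cite{lockhartFredholmHodgeLiouville1987} and Melrose's $b$-calculus in the adaptation to conical ends used in \cite{englebertDeformations}.
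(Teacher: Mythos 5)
The paper does not prove this statement: it is quoted as a standard result of the Lockhart--McOwen theory of elliptic operators on manifolds with conical/cylindrical ends, with the relevant references being \cite{lockhartFredholmHodgeLiouville1987} and the discussion in \cite{englebertDeformations}. Your outline (conjugation to the cylinder, Mellin transform and indicial family, parametrix patching for Fredholmness, and the wall-crossing count $d(\la)$ via polyhomogeneous expansions) is precisely that standard argument, so it is correct and takes essentially the same route the paper relies on.
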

\begin{ex}
\label{2_2_ex_weights}
By \cite[Ex 2.25]{englebertDeformations} we know that the critical weights of a Cayley plane seen as a cone or of an associative round $S^3 \subset S^7$ in the interval $(-4, 2)$ are given by: 
\e
 d(-3) = 1,\quad d(-1) = 1,\quad d(0) = 4, \quad d(1) = 12.
\e
Similarly, if we consider the quadratic complex cone $C_q = \{x^2 + y^2 + z^2 = 0, w = 0\} \subset \C^4$ with link $L \simeq \SU(2)/\Z_2$, the weight between $(-2, 2)$ are: 
\e
d(-1) = 2,\quad d(0) = 8, \quad d(1) = 22,\quad d(-1+\sqrt{5}) = 6.
\e
\end{ex}

Finally, there is a duality pairing between Sobolev spaces, where we need to change the weight as well.
\begin{prop}[cf. {\cite[Lem 2.8]{joyceReg}}]
\label{2_3_dual}
Assume that $1<p,q< \infty$ are such that $\frac{1}{p}+\frac{1}{q}= 1	$. Then if $n \in \N$ is the dimension of the underlying manifold and $\de \in \R^l$ is a vector of weights, there is a perfect pairing $L^p_{\de} \times L^q_{-n-\de} \longra \R$, and thus $(L^p_{\de})^* = L^q_{-n-\de}$.
\end{prop}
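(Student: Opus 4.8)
\textbf{Proof proposal for Proposition \ref{2_3_dual}.}

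The plan is to establish the perfect pairing by exhibiting an explicit integration pairing, verifying it is well-defined and bounded, and then checking that it induces an isometric isomorphism $(L^p_{\bar\de})^* \cong L^q_{-n-\bar\de}$ by reduction to the classical $L^p$--$L^q$ duality via a weight rescaling. The key observation is that the weighted Sobolev norm at level $k=0$ can be rewritten, on each conical end, as a genuine unweighted $L^p$-norm of a rescaled section. Concretely, on the $j$-th end with $\rho$ the radial weight function, the substitution $\tilde s = \rho^{-w} s$ together with the measure change $\d\tilde\mu = \rho^{-n}\d\mu$ turns $\nm{s}_{p,0,\bar\de}$ into $\nm{\tilde s}_{L^p(\d\tilde\mu)}$, where the new measure $\d\tilde\mu$ is (up to bounded comparison) the standard measure on a cylinder. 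This is where the shift $\de \mapsto -n-\de$ enters: under the same substitution applied to $L^q_{-n-\bar\de}$, the weight $w'$ with $w' = -n - w$ combines as $\rho^{-w}\rho^{-w'} = \rho^{n}$, which exactly cancels the $\rho^{-n}$ from the measure in the pairing integrand, so that $\int_M \langle s, t\rangle \,\d\mu = \int_M \langle \rho^{-w}s, \rho^{-w'}t\rangle\,\rho^{-n}\,\d\mu = \int_M \langle \tilde s, \tilde t\rangle\,\d\tilde\mu$ reduces to the ordinary $L^p$--$L^q$ bilinear form.

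In detail, I would proceed as follows. First, define the pairing $\langle\!\langle \cdot,\cdot\rangle\!\rangle: L^p_{\bar\de}(E) \times L^q_{-n-\bar\de}(E) \to \R$ by $\langle\!\langle s,t\rangle\!\rangle = \int_M \langle s(x), t(x)\rangle_E \,\d\mu(x)$, where $\langle\cdot,\cdot\rangle_E$ is the fibrewise metric on the tensor bundle $E$ (which is bounded with bounded inverse with respect to the conical structure). Second, using the substitution above and Hölder's inequality on the model measure, show $|\langle\!\langle s,t\rangle\!\rangle| \le \nm{s}_{p,0,\bar\de}\,\nm{t}_{q,0,-n-\bar\de}$, so the pairing is continuous and induces a bounded map $\Psi: L^q_{-n-\bar\de}(E) \to (L^p_{\bar\de}(E))^*$ of norm $\le 1$. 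Third, to see $\Psi$ is an isometric isomorphism, transport the whole problem through the isometry $s \mapsto \rho^{-w}s$ (and a partition of unity / bounded cover argument to handle the transition between the compact core and the ends, plus the bounded bundle metric), reducing to the statement that $(L^p(\d\tilde\mu; \tilde E))^* = L^q(\d\tilde\mu; \tilde E)$ for the model measure $\d\tilde\mu$ and a bundle $\tilde E$ with bounded metric — which is the classical Riesz representation / duality theorem for $L^p$ spaces of vector-valued (or just finite-rank-bundle-valued) functions, valid since $1 < p < \infty$. Surjectivity of $\Psi$ then follows: given $\phi \in (L^p_{\bar\de})^*$, pull it back to a functional on the model $L^p$, represent it by some $\tilde t \in L^q(\d\tilde\mu)$, and set $t = \rho^{w'}\tilde t \in L^q_{-n-\bar\de}$; one checks $\Psi(t) = \phi$ and $\nm{t}_{q,0,-n-\bar\de} = \nm{\phi}$.

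The main obstacle is bookkeeping rather than conceptual: one must check that the weight function $w$ (which only asymptotically equals $\de_j$ on the $j$-th end and is glued smoothly over the compact core) does not spoil the identity $w' = -n - w$ globally — it does not, because any two admissible weight functions agreeing asymptotically differ by a bounded function, and the Banach space structure (hence the dual) is independent of that choice, as already noted after the definition of $L^p_{k,\bar\de}$. A second minor point is that $E$ is a bundle, not a trivial one, so the fibrewise pairing and the comparison of bundle metrics with the conical model metric must be invoked; since $E$ is a tensor bundle built from $TM$ with its conical metric, the relevant norms are uniformly equivalent to the model ones, and the argument goes through verbatim. I would remark that the case $k > 0$ is not claimed here — only the $L^p_{\bar\de} = L^p_{0,\bar\de}$ duality is needed downstream — so no Sobolev-space duality subtleties (which would require the full elliptic theory of Theorem \ref{2_1_change_of_index}) arise.
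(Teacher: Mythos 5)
The paper gives no proof of this proposition; it is imported verbatim from Joyce (the ``cf.~[Lem.~2.8]'' citation), so there is no internal argument to compare yours against. Your proof is correct and is essentially the standard argument behind that cited lemma: at $k=0$ the map $s \mapsto \rho^{-w}s$ is, by the very definition of the norm, an isometry of $L^p_{\bar\de}$ onto the unweighted $L^p$ space of the $\sigma$-finite measure $\rho^{-n}\d\mu$, the shift $\de \mapsto -n-\de$ (i.e.\ $w' = -n-w$) is exactly what makes the integration pairing $\int_M \langle s,t\rangle\,\d\mu$ correspond to the classical $L^p$--$L^q$ pairing for that measure, and H\"older plus classical duality (unproblematic for sections of a finite-rank metric bundle, so no Radon--Nikodym issues) give both boundedness and surjectivity of $t \mapsto \langle\!\langle \cdot,t\rangle\!\rangle$. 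Your two housekeeping remarks are also the right ones: the choice of interpolating weight function only changes the norm by a bounded conformal factor, consistent with the paper's observation that the Banach space is independent of that choice, and the statement concerns only the $k=0$ spaces, so no Sobolev-level duality is needed.
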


\subsection{Deformation theory of compact and conical Cayleys}

We now turn our attention to the results proven in \cite{englebertDeformations} about the deformation theory of compact, asymptotically conical and conically singular Cayley submanifolds. 

If $N \subset M$ is an immersed compact four dimensional submanifold in an $8$-manifold equipped with a smooth family of $\Spin(7)$-structures $\{\Phi_s\}_{s \in \cS}$, we consider the moduli space:
\ea
\label{3_3_cmpt_moduli}
\cM(N, \cS) = \{ (\tilde{N}, s): \tilde{N}& \text{ is an immersed Cayley submanifold of } (M, \Phi_s) \nonumber\\
 &\text{ with } \tilde{N} \text{ isotopic to } N\}. 
\ea
When the family $\cS$ has a single element $\Phi$ we also this moduli space by $\cM(N, \Phi)$. The structure of this moduli space has been studied in the foundational paper \cite{mcleanDeformationsCalibratedSubmanifolds1998} and in \cite{mooreDeformationTheoryCayley2017} in the case of torsion-free $\Spin(7)$-structures. We extended this result in \cite{englebertDeformations} to include families of $\Spin(7)$-structure with torsion, as in the following theorem.

\begin{thm}[{\cite[Thm. 4.9] {englebertDeformations}}]
\label{2_3_compact_structure}
There is a family of non-linear deformation operators $F_s$ ($s \in \cS$) for which for $\ep> 0$ sufficiently small give a smooth map: 
\eas
F: \mathcal{L}_\eps = \{ v\in L^p_{k+1} (\nu_\ep (N)), \nm{v}_{L^p_{k+1}} < \ep \} \times \cS \longrightarrow L^{p}_{k}(E).
\eas
A neighbourhood of $(N, s_0)$ in $\cM(N, \cS)$ is homeomorphic to the zero locus of $F$ near $(0, s_0)$. Furthermore we can define the \textbf{deformation space} $\cI(N, \cS) \subset C^\infty (\nu (N))$ to be the the kernel of $D_{N, s_0} = \D F (0, s_0)$, and the \textbf{obstruction space} $\cO(N, \cS) \subset C^\infty(E_{\cay})$ to be the cokernel of $D_{N, s_0}$. Then a neighbourhood of $(N, s_0)$ in $\cM(N, \cS)$ is also homeomorphic to the zero locus of a smooth Kuranishi map: 
\eas
	K:  \cI(N, \cS) \longra \cO(N, \cS).
\eas
In particular if $\cO(N, \cS) = \{0\}$ is trivial, $\cM(N, \cS) $ admits the structure of a smooth manifold near $(N, s_0)$. We say that $N$ is \textbf{unobstructed} in this case.
\end{thm}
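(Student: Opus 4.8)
The plan is the standard Kuranishi-model argument for moduli of calibrated submanifolds, carried out on the compact manifold $N$ and extended to the family $\{\Phi_s\}$. Throughout, fix $p$ and $k$ in the admissible range; it suffices that $kp>\dim N=4$, so that $L^p_{k+1}(\nu(N))\hookrightarrow C^1(\nu(N))$ by the Sobolev embedding theorem and $L^p_k$ is a Banach algebra stable under composition with smooth fibrewise maps. Using the normal exponential map, a fixed tubular neighbourhood of $N$ in $M$ identifies small sections $v$ of $\nu_\ep(N)$ with immersed submanifolds $N_v=\exp_v(N)$, and every immersed submanifold $C^1$-close to $N$ and isotopic to it is, as a set, $N_v$ for a unique small $v$; no quotient by ambient diffeomorphisms is needed since we record submanifolds, not parametrisations. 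For $F_s$ I would take the Cayley deformation operator \eqref{2_1_deformation_op} built from the Cayley $4$-form $\tau_s$ of $(M,\Phi_s)$, all landing in the same bundle $E=E_{\cay}$ attached to $N$; this is legitimate near $s_0$ because $N$ stays $\al$-Cayley for $\Phi_s$ with $\al\to1$ as $s\to s_0$, so Proposition \ref{2_1_deformation_op_prop} applies uniformly and gives: for $\nm{v}_{L^p_{k+1}}<\ep$ (controlling $|v|$ and $|\nabla v|$ pointwise by Sobolev embedding) and $s$ near $s_0$, the submanifold $N_v$ is Cayley in $(M,\Phi_s)$ exactly when $F(v,s)=0$. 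Smoothness of $F\colon\mathcal{L}_\ep\times\cS\to L^p_k(E)$ follows because a first-order non-linear differential operator whose coefficients depend smoothly on the $1$-jet of $v$ and on $s$ defines a smooth map $L^p_{k+1}\to L^p_k$ once $kp>\dim N$; and elliptic regularity for the equation $F(v,s)=0$, elliptic at small $v$ by Proposition \ref{2_1_deformation_op_prop}, shows every $L^p_{k+1}$-solution is smooth. Hence a neighbourhood of $(N,s_0)$ in $\cM(N,\cS)$ is homeomorphic to the zero set of $F$ near $(0,s_0)$, with matching topologies (on the solution set the $L^p_{k+1}$, $C^1$ and $C^\infty$ topologies agree by elliptic estimates).

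The differential $D_{N,s_0}=\D F(0,s_0)$ acts on $L^p_{k+1}(\nu(N))\oplus T_{s_0}\cS$ (one may equally keep the finite-dimensional $\cS$-directions separate and let $\cS$ enter only as a parameter in the Kuranishi map). Its restriction to the $\nu(N)$-directions is the linearised Cayley operator, elliptic since $N$ is Cayley by Proposition \ref{2_1_deformation_op_prop}, hence Fredholm on the compact $N$; adjoining the finite-dimensional $T_{s_0}\cS$-factor preserves Fredholmness. Thus $\cI(N,\cS)=\Ker D_{N,s_0}$ and $\cO(N,\cS)=\Coker D_{N,s_0}$ are finite-dimensional, with $\cI(N,\cS)\subset C^\infty$ by elliptic regularity and $\cO(N,\cS)$ represented by the $L^2$-orthogonal complement of $\im D_{N,s_0}$ inside $C^\infty(E)$. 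Choose a closed complement $X$ to $\cI(N,\cS)$ in the domain, write $\pi\colon L^p_k(E)\to\im D_{N,s_0}$ for the projection along $\cO(N,\cS)$, and let $Q$ be the inverse of the isomorphism $D_{N,s_0}|_X\colon X\to\im D_{N,s_0}$.

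Now write the deformation variable as $\xi+x$ with $\xi\in\cI(N,\cS)$, $x\in X$; then $F=0$ is equivalent to $\pi F=0$ together with $(\id-\pi)F=0$. The map $(\xi,x)\mapsto x+Q\,\pi F(\xi+x)$ has $x$-derivative $\id_X$ at the origin, so the Banach-space implicit function theorem gives a unique smooth $x=x(\xi)$ near $0$ with $x(0)=0$ solving $\pi F(\xi+x(\xi))=0$, and $\xi\mapsto\xi+x(\xi)$ is a homeomorphism near $0$ onto $\{\,v:\pi F(v)=0\,\}$. The zero set of $F$ near $(0,s_0)$ is therefore the image of the zero set of the smooth \emph{Kuranishi map}
\e
K\colon\cI(N,\cS)\longra\cO(N,\cS),\qquad K(\xi)=(\id-\pi)F\bigl(\xi+x(\xi)\bigr),\qquad \D K(0)=0,
\e
defined near $0$. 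Composing with the identification from the first paragraph yields the claimed homeomorphism of a neighbourhood of $(N,s_0)$ in $\cM(N,\cS)$ onto $K^{-1}(0)$; it is only a homeomorphism in general, since $K^{-1}(0)$ need not be smooth. When $\cO(N,\cS)=\{0\}$ we have $\id-\pi=0$, so $K\equiv0$ and $K^{-1}(0)$ is an open neighbourhood of $0$ in the finite-dimensional vector space $\cI(N,\cS)$; thus $\cM(N,\cS)$ is a smooth manifold of dimension $\dim\cI(N,\cS)$ near $(N,s_0)$, and $N$ is unobstructed.

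The main difficulty is not the Kuranishi reduction, which is purely formal, but the two analytic inputs feeding into it: (i) that $F$ exactly detects the Cayley condition and is elliptic at its small zeros, uniformly for $s$ near $s_0$ --- so that nearby Cayleys correspond exactly to small normal graphs and solutions bootstrap to $C^\infty$ --- and (ii) that $F$ is a smooth map between the chosen Sobolev completions, which is where the admissibility condition $kp>\dim N$ enters. Both are exactly the content of Proposition \ref{2_1_deformation_op_prop} together with standard elliptic theory; everything else --- exhaustiveness of the tubular-neighbourhood chart on a $C^1$-neighbourhood, finite-dimensionality of kernel and cokernel, and the implicit-function step --- is routine.
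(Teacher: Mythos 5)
Your proposal is correct and follows essentially the same route as the source: this theorem is only quoted here from the companion paper, whose argument is exactly the one you give — the nonlinear Cayley operator of \eqref{2_1_deformation_op} built from $\tau_s$, Proposition \ref{2_1_deformation_op_prop} to identify its small zeros with nearby Cayleys and to get ellipticity, smoothness of $F$ on $L^p_{k+1}$ with $kp>4$, and the standard Lyapunov--Schmidt reduction producing the Kuranishi map. The only cosmetic caveat is the bookkeeping of the $\cS$-directions (kernel in $\nu(N)$ versus $\nu(N)\oplus T_{s_0}\cS$), which you already flag and which does not affect the argument.
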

Note that the notion of unobstructedness depends on the family $\cS$. The same Cayley submanifold $N \subset (M,\Phi)$ may be unobstructed in some $\cM(N, \cS)$ while being obstructed in $\cM(M,\Phi)$. In the embedded case we can express the index of the linearised operator (thus also the virtual dimension of $\cM(N, \cS)$ by the formula: 
\e
\label{2_3_form_index}
\ind D_{N, s_0} = \frac{1}{2}(\si(N) + \chi(N)) - [N] \cdot [N] + \dim \cS.
\e
Here $\si(N)$ and $\chi(N)$ are the signature and Euler characteristic of $N$ respectively, and $[N] \cdot [N]$ is the self-intersection number of $N$ in $M$.

Next, we turn our attention to the conical theory, starting with the asymptotically conical case. So let $A \subset \R^8$ be an $\AC_\la$ Cayley for a $\Spin(7)$-structure $\Phi$ that is $\AC_\eta$ to the flat $\Phi_0$. Here we require that $\eta < \la < 1$. Assume that $A$ is asymptotic to a single cone $C=\R_{>0} \times L$. If $\{\Phi_s\}_{s\in\cS}$ is a family of smooth $\AC_\eta$ deformations of $\Phi$, then we consider the moduli space: 
\ea
\label{2_3_ac_moduli}
\cM_\AC^\la(A, \mathcal{S}) = \{ (\tilde{A}, \Phi_s): \tilde{A}& \text{ is an } \AC_\la \text{ Cayley submanifold of } (\R^8, \Phi_s) \nonumber\\
 &\text{ isotopic to } A\text{ and asymptotic to the same cone} \}. 
\ea
It is important to keep track of the rate $\la$ of the Cayley as the expected dimension of the moduli space above as well as unobstructedness will depend on it. The following is the structure theorem for these moduli spaces:

\begin{thm}[{\cite[Thm. 4.18]{englebertDeformations}}]
\label{2_3_ac_structure}
There is a non-linear deformation operator $F_{\AC}$ which for $\ep> 0$ sufficiently small is a $C^\infty$ map: 
\eas
F_{\AC}: \mathcal{L}_\eps = \{ v\in L^p_{k+1, \la} (\nu_\ep (A)), \nm{v}_{L^p_{k+1, \la}} < \ep \} \times \cS \longrightarrow L^{p}_{k, \la-1}(E).
\eas
A neighbourhood of $(A, \Phi)$ in $\cM^\la_{\AC}(A, \cS)$ is homeomorphic to the zero locus of $F_{\AC}$ near $0$. If $\la \not \in \cD_L$ is not critical then the linearised operator $D_{\AC} = \D F_{\AC}(0,s_0)$ is Fredholm. We define the \textbf{deformation space} $\cI^{\la}_{\AC}(A) \subset C^\infty_{\la} (\nu (A))\times T_\Phi \cS$ to be the the kernel of $D_{\AC}$, and the \textbf{obstruction space} $\cO^{\la}_{\AC}(A) \subset C^\infty_{4-\la}(E)$ to be the cokernel of $D_{\AC}$. Both are finite dimensional. Then a neighbourhood of $A$ in $\cM^\la_{\AC}(A, \cS)$ is also homeomorphic to the zero locus of a smooth Kuranishi map: 
\eas
	K^\la_{\AC}:  \cI^\la_{\AC}(A) \longra \cO^\la_{\AC}(A).
\eas
In particular if $\cO^\la_{\AC}(A) = \{0\}$ is trivial, $\cM^\la_{\AC}(A, \cS) $ admits the structure of a smooth manifold near $A$. We say that $A$ is \textbf{unobstructed} in this case.
\end{thm}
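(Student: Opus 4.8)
The plan is to realise the Cayley deformations of $A$ as the zero set of a nonlinear elliptic operator between weighted Sobolev spaces, in exact parallel with the compact case of Theorem \ref{2_3_compact_structure} but keeping the conical end, and then to run a Kuranishi reduction via the conical Fredholm theory of Theorem \ref{2_1_change_of_index}. First I would construct the operator. Fix a tubular neighbourhood $\nu_\epsilon(A) \subset \R^8$ of $A$ adapted to the asymptotically conical geometry, so that the exponential map identifies $\nu_\epsilon(A)$ with a neighbourhood of $A$ and is asymptotically scale-equivariant near infinity; for $v \in C^\infty(\nu_\epsilon(A))$ let $A_v = \exp_v(A)$. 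Mimicking \eqref{2_1_deformation_op}, set $F_{\AC}(v, s) = \pi_E(\star_A \exp^*_v(\tau_s|_{A_v}))$, where $\tau_s \in \La^4\R^8 \ot E_{\cay}$ is the Cayley-obstruction form of $\Phi_s$ and $E \to A$ is the associated rank-$4$ bundle (the pullback of $E_{\cay}$ along the Gauss map of $A$). By the $\AC$ analogue of Proposition \ref{2_1_deformation_op_prop}, for $v$ pointwise small $A_v$ is Cayley in $(\R^8, \Phi_s)$ precisely when $F_{\AC}(v,s) = 0$, and $F_{\AC}$ is elliptic at such $v$.

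Second I would verify that $F_{\AC}$ is a $C^\infty$ map $\mathcal{L}_\epsilon \times \cS \to L^p_{k,\la-1}(E)$ for $\epsilon$ small. The decisive facts are: $A$ is $\AC_\la$ asymptotic to the Cayley cone $C$, so $\tau_0|_C \equiv 0$ and hence $F_{\AC}(0, s_0) = \pi_E(\star_A \tau_0|_A) \in O(r^{\la-1})$, i.e. lies in $L^p_{k,\la-1}(E)$; $\Phi_s$ is $\AC_\eta$ with $\eta < \la$, so the ambient torsion contributes only lower-order terms; on the end $(r_0,\infty)\times L$ the coefficients of $F_{\AC}$ converge to those of the scale-invariant Cayley operator on $C$, so $F_{\AC}$ maps $C^\infty_\la$ into $C^\infty_{\la-1}$ and likewise between the corresponding Sobolev spaces; and the remainder $F_{\AC}(v,s) - F_{\AC}(0,s_0) - \D F_{\AC}(0,s_0)(v, s-s_0)$ is quadratic in $(v, s-s_0)$ with the same mapping behaviour, estimated with the Sobolev multiplication and embedding theorems (Theorem \ref{2_3_Sobolev_embedding_acyl_ac}) in the range $k+1 > 4/p$. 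Smoothness of $F_{\AC}$ then follows as in the compact case.

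Third I would analyse the linearisation and reduce. The operator $D_{\AC} = \D F_{\AC}(0, s_0)$ is, on $A$, McLean's first-order elliptic deformation operator (of Dirac type, coupling $\nu(A)$ to $E$), perturbed by torsion terms and by the finite-rank $T_\Phi\cS$-direction, and on the end it is asymptotic to the scale-invariant operator on $C$ built from a Dirac-type operator on the link $L$; hence it is a conical elliptic operator of order $1$ and rate $1$. By Theorem \ref{2_1_change_of_index}, for $\la \notin \cD_L$ the map $D_{\AC}: L^p_{k+1,\la}(\nu(A)) \oplus T_\Phi\cS \to L^p_{k,\la-1}(E)$ is Fredholm, so $\cI^\la_{\AC}(A) = \ker D_{\AC}$ and $\cO^\la_{\AC}(A) = \Coker D_{\AC}$ are finite-dimensional; by elliptic regularity and the duality of Proposition \ref{2_3_dual}, the cokernel is represented by smooth sections in $C^\infty_{4-\la}(E)$, the kernel of the formal adjoint at the dual weight. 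Choosing a complement to $\im D_{\AC}$ and a splitting of $L^p_{k+1,\la}$ transverse to $\ker D_{\AC}$, the implicit function theorem on Banach spaces solves the component of $F_{\AC}(v,s)=0$ lying in $\im D_{\AC}$, leaving a smooth Kuranishi map $K^\la_{\AC}: \cI^\la_{\AC}(A) \to \cO^\la_{\AC}(A)$ whose zero set is homeomorphic to a neighbourhood of $0$ in the zero set of $F_{\AC}$. This zero set is then identified with a neighbourhood of $A$ in $\cM^\la_{\AC}(A,\cS)$: any $\AC_\la$ Cayley that is $C^1$-close to $A$, isotopic to it, and asymptotic to the same cone is the normal graph $A_v$ of a unique small $v \in L^p_{k+1,\la}(\nu(A))$ --- the rate $\la$ and the same-cone condition force $v$ into this weighted space --- and conversely $A_v$ with $F_{\AC}(v,s)=0$ is Cayley by the first step; these are mutually inverse homeomorphisms. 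If $\cO^\la_{\AC}(A) = \{0\}$ the Kuranishi map is trivial, equivalently $D_{\AC}$ is surjective, so by the implicit function theorem the zero set --- hence $\cM^\la_{\AC}(A,\cS)$ near $A$ --- is a smooth manifold of dimension $\ind D_{\AC} = \dim \cI^\la_{\AC}(A)$.

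The main obstacle is the weighted analysis of the second and third steps: showing that $F_{\AC}$ has the claimed mapping and smoothness properties between the weighted Sobolev spaces and, above all, that its linearisation really is a conical elliptic operator whose indicial roots are governed by the Dirac-type spectrum of the link $L$, so that Theorem \ref{2_1_change_of_index} applies with $\cD_L$ as the exceptional set and $\la$ may be taken non-critical. A secondary subtlety is the identification in the last step: checking that the normal-graph construction misses no nearby Cayley with the prescribed asymptotics and that the isotopy and same-cone conditions correspond on the two sides, so that the homeomorphism with the zero set of $F_{\AC}$, and hence with the zero set of $K^\la_{\AC}$, is genuine.
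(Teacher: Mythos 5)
Your proposal follows essentially the same route as the cited proof: set up the nonlinear Cayley operator on normal graphs, show it is a smooth map between weighted Sobolev spaces on the $\AC$ end, identify the linearisation as a conical elliptic operator of rate $1$ with indicial roots governed by a Dirac-type operator on the link, invoke the conical Fredholm theory for $\la \notin \cD_L$, and finish with a Kuranishi reduction via the implicit function theorem together with the normal-graph identification of nearby $\AC_\la$ Cayleys. One small slip worth noting: since $A$ is Cayley for $\Phi_{s_0}$, one has $F_{\AC}(0,s_0)=0$ exactly (not merely $O(r^{\la-1})$); the $\AC_\la$ decay is what controls the mapping properties of the nonlinear remainder and the $s$-derivative, not the value at the basepoint.
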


Later we will need a bound on the inverse of $D_{\AC}$ on the complement of its kernel, when $A$ is an $\al$-Cayley. 

\begin{prop}
\label{2_3_ac_pseudo}
Suppose that $A \subset (\R^8, \Phi_0)$ is $\AC_\la$ to a Cayley cone with $\la <  1$ and $\al$-Cayley for $\al$ sufficiently close to $1$. Let $\de \in \R$ with $\de\not\in \cD_L$ and suppose $p > 4$, $k \ge 1$, $\eps > 0$ small. Then there is a subspace $\ka_\AC \subset C^\infty_{c}(\nu(A))$ such that for any $v \in \ker D_\AC \subset L^p_{k+1,\de -\eps}(\nu_\ep(A))$ we have that if $v$ is $L^2_{\de-\eps}$-orthogonal to $\ka_\AC$, then $v$ must vanish. This subspace, called a \textbf{pseudo-kernel}, can be chosen of the same dimension as $\ker D_{\AC}$. If we identify the normal bundles of $A$ for small $\AC_\eta$ perturbations of the $\Spin(7)$-structure via orthogonal projection, then $\ka_{\AC}$ is also a pseudo-kernel for small perturbations.
\end{prop}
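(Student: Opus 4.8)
The argument splits into two essentially independent parts: constructing $\ka_\AC$ for the fixed structure $\Phi_0$, which is elementary linear algebra combined with the density of $C^\infty_c(\nu(A))$; and the stability statement, which I would deduce from the openness of the injectivity condition among Fredholm operators.

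\textbf{Construction of $\ka_\AC$.} First fix $\eps>0$ small enough that $[\de-\eps,\de]\cap\cD_L=\es$, which is possible since $\cD_L$ is discrete and $\de\notin\cD_L$. The operator $D_\AC$ is elliptic (Proposition~\ref{2_1_deformation_op_prop}) and conical, so by the $\AC$ analogue of Theorem~\ref{2_1_change_of_index} it is Fredholm as a map
\[
D_\AC\colon L^p_{k+1,\de-\eps}(\nu(A))\longra L^p_{k,\de-1-\eps}(E),
\]
and $\cK:=\ker D_\AC$ is finite dimensional, say of dimension $m$ (and independent of the choice of small $\eps$, by the same theorem); by elliptic regularity $\cK\subset C^\infty_{\de-\eps}(\nu(A))$. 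The one thing to observe is that the $L^2_{\de-\eps}$-pairing is non-degenerate in the $\cK$-factor of $C^\infty_c(\nu(A))\times\cK$: if $v\in\cK$ pairs to zero with every $\kappa\in C^\infty_c(\nu(A))$ then $v\equiv0$, the $L^2_{\de-\eps}$ weight being a fixed positive function. As $\cK$ is finite dimensional, this makes the map $C^\infty_c(\nu(A))\to\cK^*$, $\kappa\mapsto\langle\,\cdot\,,\kappa\rangle_{L^2_{\de-\eps}}|_\cK$, surjective. Picking a basis $v_1,\dots,v_m$ of $\cK$ and then $\kappa_1,\dots,\kappa_m\in C^\infty_c(\nu(A))$ with $\langle v_i,\kappa_j\rangle_{L^2_{\de-\eps}}=\delta_{ij}$, set $\ka_\AC:=\spn_\R\{\kappa_1,\dots,\kappa_m\}$, which has dimension $m=\dim\cK$. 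If $v=\sum_ic_iv_i\in\cK$ is $L^2_{\de-\eps}$-orthogonal to $\ka_\AC$ then $c_j=\langle v,\kappa_j\rangle_{L^2_{\de-\eps}}=0$ for all $j$, so $v=0$. This establishes the first two claims.

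\textbf{Stability.} Let $\Phi_s$ range over small $\AC_\eta$ deformations of $\Phi_0=\Phi_{s_0}$. For $s$ near $s_0$, $g_{\Phi_s}$-orthogonal projection identifies $\nu(A)$ with the $g_{\Phi_s}$-normal bundle, and transporting $\D_vF_\AC(0,s)$ through this isomorphism yields elliptic conical operators $D_{\AC,s}\colon L^p_{k+1,\de-\eps}(\nu(A))\to L^p_{k,\de-1-\eps}(E)$, Fredholm (same weight, $\de-\eps\notin\cD_L$) and continuous in $s$ in the operator norm, with $D_{\AC,s_0}=D_\AC$; this is part of the construction underlying Theorem~\ref{2_3_ac_structure}. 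I then consider
\[
\widehat D_s\colon L^p_{k+1,\de-\eps}(\nu(A))\longra L^p_{k,\de-1-\eps}(E)\op\R^m,\qquad v\longmapsto\bigl(D_{\AC,s}v,\ (\langle v,\kappa_j\rangle_{L^2_{\de-\eps}})_{j=1}^m\bigr).
\]
The map $v\mapsto(D_{\AC,s}v,0)$ is Fredholm (same kernel as $D_{\AC,s}$, cokernel enlarged by $\R^m$), and $\widehat D_s$ differs from it by a bounded finite-rank operator, hence is also Fredholm; moreover $s\mapsto\widehat D_s$ is operator-norm continuous. By the previous step $\widehat D_{s_0}$ is injective: $\widehat D_{s_0}v=0$ forces $v\in\cK$ and $v\perp_{L^2_{\de-\eps}}\ka_\AC$, hence $v=0$. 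Because Fredholmness is an open condition and $\dim\ker$ is upper semicontinuous on Fredholm operators, $\widehat D_s$ remains injective for all $s$ in a neighbourhood of $s_0$. For such $s$, any $v\in\ker D_{\AC,s}$ with $v\perp_{L^2_{\de-\eps}}\ka_\AC$ satisfies $\widehat D_sv=0$, so $v=0$; that is, $\ka_\AC$ is a pseudo-kernel for $D_{\AC,s}$.

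\textbf{Main difficulty.} The only delicate point is that $s\mapsto D_{\AC,s}$ be continuous in operator norm uniformly out to the conical end, i.e.\ that the perturbation of the coefficients of $D_{\AC,s}$ decay, uniformly in $s$, at the rate matching the weighted Sobolev spaces; this is precisely what the $\AC_\eta$ hypothesis on $\{\Phi_s\}$ guarantees, and it is what prevents operator-norm mass from escaping to infinity and breaking the openness argument. Granting it, openness of the injective-Fredholm condition finishes the proof; alternatively one could replace the last step by a contradiction/compactness argument, extracting from a hypothetical sequence $v_n\in\ker D_{\AC,s_n}$ with $s_n\to s_0$, $\nm{v_n}_{L^p_{k+1,\de-\eps}}=1$ and $v_n\perp_{L^2_{\de-\eps}}\ka_\AC$ a nonzero weak limit in $\cK$ that is still orthogonal to $\ka_\AC$, using the uniform semi-Fredholm estimate together with the compact Rellich embedding $L^p_{k+1}(K)\hookra L^p(K)$ on a fixed compact $K\supset\bigcup_j\supp\kappa_j$ (here $k\ge1$ is used), contradicting the construction of $\ka_\AC$.
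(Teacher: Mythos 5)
Your proof is correct, and the overall strategy matches the paper's: construct $\ka_\AC$ via the density of $C^\infty_c(\nu(A))$ in the relevant weighted spaces, then argue that the construction is stable under $\AC_\eta$ deformations of the $\Spin(7)$-structure. There are two notable differences in execution. For the construction, the paper approximates a fixed basis $\{a_i\}$ of $\ker D_\AC$ arbitrarily well by compactly supported sections $\{\tilde a_i\}$ and then observes that the pairing matrix $\langle a_i,\tilde a_j\rangle_{L^2_{\de-\eps}}$ is a small perturbation of the positive-definite Gram matrix, hence invertible; you instead pick a dual basis $\{\kappa_j\}$ directly, using surjectivity of $C^\infty_c\to\cK^*$. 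Both use exactly the same density fact and produce a $\ka_\AC$ of the correct dimension; yours is perhaps marginally cleaner in that the nondegenerate pairing is exact rather than approximate. For stability the paper is terse, asserting only that ``the kernel is perturbed continuously in $L^p_{k+1,\de-\eps}$,'' which is not literally true for Fredholm operators since $\dim\ker$ can jump down; what one actually needs is upper semicontinuity, which is precisely what your augmented operator $\widehat D_s$ makes precise. This is the stronger contribution of your write-up: the openness of the injective-Fredholm condition is the right mechanism and your phrasing closes a gap the paper leaves implicit. Your final remark correctly identifies the hypothesis that makes this work (operator-norm continuity of $s\mapsto D_{\AC,s}$ uniformly out to the conical end, guaranteed by the $\AC_\eta$ decay). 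One small thing worth flagging, which you do handle: you need $\eps$ small enough that there are no critical rates in $[\de-\eps,\de]$; the paper's hypothesis only says $\de\notin\cD_L$ and ``$\eps>0$ small,'' so making this explicit is appropriate.
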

\begin{proof}
As the operator $D_{\AC}$ is Fredholm by assumption, we know that $\ker D_{\AC}$ is finite-dimensional. Now by \cite[Cor. 4.5]{lockhartFredholmHodgeLiouville1987} we can approximate a given basis $\{a_i\}_{i=1}^l$ of $\ker D_{\AC}$ arbitrarily well in $L^p_{k+1,\de }$  by $C^\infty_{c}$ sections. By the Sobolev embedding $L^p_{k+1,\de} \hookrightarrow L^2_{0, \de -\eps}$ the same is true for $L^2_{\de -\eps}$. These approximations gives us the desired subspace $\kappa_{\AC}$. For nearby $\Spin(7)$-structures this result remains true, as the kernel is perturbed continuously in $L^p_{k+1, \de-\eps }$ by $\AC_\eta$ perturbations of the ambient $\Spin(7)$-structure.
\end{proof}

\begin{prop}
\label{2_3_ac_bound_k}
In the situation of Proposition \ref{2_3_ac_pseudo} there is a constant $C_{\AC}$ such that the following holds. If $v\in L^p_{k+1, \de}(\nu(A))$ is $L^2_{\de-\eps}$-orthogonal to $\ka_{\AC}$ then: 
\e
\nm{v}_{L^p_{k+1, \de}} \le C_\AC \nm{D_{\AC} v}_{L^p_{k, \de-1}}. 
\e
The same inequality holds true for small $\AC_\eta$ perturbations of the $\Spin(7)$-structure.
\end{prop}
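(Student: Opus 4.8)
The plan is to derive the estimate by a standard contradiction-and-compactness argument, using the pseudo-kernel from Proposition \ref{2_3_ac_pseudo} to kill the genuine kernel. Suppose the bound fails: then there is a sequence $v_n \in L^p_{k+1,\de}(\nu(A))$, each $L^2_{\de-\eps}$-orthogonal to $\ka_{\AC}$, with $\nm{v_n}_{L^p_{k+1,\de}} = 1$ and $\nm{D_{\AC} v_n}_{L^p_{k,\de-1}} \to 0$. First I would invoke elliptic regularity for the conical operator $D_{\AC}$ together with the Fredholm package of Theorem \ref{2_1_change_of_index}: since $D_{\AC}$ is Fredholm at the weight $\de \notin \cD_L$, there is an a priori estimate $\nm{v}_{L^p_{k+1,\de}} \lesssim \nm{D_{\AC} v}_{L^p_{k,\de-1}} + \nm{v}_{L^p_{0,\de'}(K')}$ for a compactly-contained region $K'$ and some $\de'$ — equivalently one uses that $D_{\AC}$ has closed range and finite-dimensional kernel. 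Hence the $v_n$ are bounded in $L^p_{k+1,\de}$ and, after passing to a subsequence, converge weakly in $L^p_{k+1,\de}$ and strongly in $L^p_{k,\de'}$ on compact sets (Rellich–Kondrachov, using the compact Sobolev embedding on truncations) to a limit $v_\infty$. By weak continuity of $D_{\AC}$ we get $D_{\AC} v_\infty = 0$, so $v_\infty \in \ker D_{\AC}$.

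The next step is to upgrade the convergence to be global, i.e. strong in $L^p_{k+1,\de}$, so that $\nm{v_\infty}_{L^p_{k+1,\de}} = 1$ and $v_\infty \neq 0$. This is where the weight has to be handled carefully: one needs a \emph{decay} input to rule out mass escaping to the conical end. The cleanest route is to choose an auxiliary weight $\de'' < \de$, still with $(\de'', \de] \cap \cD_L = \emptyset$, so that by Theorem \ref{2_1_change_of_index} the index does not jump between $\de''$ and $\de$ and, crucially, any $L^p_{k+1,\de}$ solution of $D_{\AC} v = 0$ already lies in $L^p_{k+1,\de''}$ (no new kernel elements appear with the better decay rate since there are no critical weights in between). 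Combined with the a priori estimate at weight $\de''$ applied to $v_n - v_\infty$ and the strong convergence on compact sets, a short argument (subtract off a cutoff of $v_\infty$, estimate the tail using the weighted a priori bound and the decay of $D_{\AC} v_n$) gives $v_n \to v_\infty$ strongly in $L^p_{k+1,\de}$. Therefore $v_\infty$ is a unit-norm element of $\ker D_{\AC}$ which is $L^2_{\de-\eps}$-orthogonal to $\ka_{\AC}$; by Proposition \ref{2_3_ac_pseudo} this forces $v_\infty = 0$, contradicting $\nm{v_\infty}_{L^p_{k+1,\de}} = 1$. This establishes the estimate with some finite constant $C_{\AC}$.

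For the last sentence — uniformity under small $\AC_\eta$ perturbations of the $\Spin(7)$-structure — I would argue that the whole construction is stable. Under the identification of normal bundles by orthogonal projection, the perturbed deformation operators $D_{\AC}^{\Phi_s}$ converge to $D_{\AC} = D_{\AC}^{\Phi_{s_0}}$ in the operator norm $L^p_{k+1,\de} \to L^p_{k,\de-1}$ (their coefficients differ by terms controlled by the $\AC_\eta$-smallness of $\Phi_s - \Phi_{s_0}$, since $\eta < \la \le \de$ in the relevant range, so the perturbation is a lower-order conical operator of strictly better rate). One then has two options: either repeat the contradiction argument allowing both $v_n$ and $\Phi_{s_n} \to \Phi_{s_0}$ to vary, using that $\ka_{\AC}$ remains a pseudo-kernel for all nearby structures (the last line of Proposition \ref{2_3_ac_pseudo}); or, more cheaply, note that once the estimate holds for $D_{\AC}$ with constant $C_{\AC}$, the triangle inequality $\nm{D_{\AC} v}_{L^p_{k,\de-1}} \le \nm{D_{\AC}^{\Phi_s} v}_{L^p_{k,\de-1}} + \nm{(D_{\AC} - D_{\AC}^{\Phi_s}) v}_{L^p_{k,\de-1}} \le \nm{D_{\AC}^{\Phi_s} v}_{L^p_{k,\de-1}} + o(1)\nm{v}_{L^p_{k+1,\de}}$ lets one absorb the error term for $s$ close enough to $s_0$, yielding the same inequality with constant $2C_{\AC}$, say. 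I expect the main obstacle to be the second paragraph: making the upgrade from local to global convergence rigorous requires the no-critical-weights-in-between bookkeeping and a careful weighted tail estimate, and this is exactly the point where the hypothesis $\de \notin \cD_L$ (and the freedom to shift the weight slightly) is essential.
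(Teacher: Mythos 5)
Your compactness-and-contradiction argument is sound in outline but takes a genuinely different, and considerably heavier, route than the paper. The paper's proof is a short, soft functional-analysis argument: since $\ka_{\AC}$ has the same (finite) dimension as $\ker D_{\AC}$ and $\ker D_{\AC}\cap\ka_{\AC}^{\perp}=\{0\}$, one gets the splitting $L^p_{k+1,\de}=\ka_{\AC}^{\perp}\oplus\ker D_{\AC}$; from this $\tilde D := D_{\AC}|_{\ka_{\AC}^{\perp}}$ is a continuous bijection onto $\im D_{\AC}$, which is closed because $D_{\AC}$ is Fredholm at the non-critical weight $\de$. The open mapping theorem then hands you a bounded inverse, which is exactly the claimed estimate. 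No sequences, no auxiliary weight, no tail bookkeeping.

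Your approach does work, but two remarks. First, the auxiliary weight $\de''$ and the improved-decay-of-the-kernel step are not actually needed: once you have the semi-Fredholm a priori estimate $\nm{v}_{L^p_{k+1,\de}} \lesssim \nm{D_{\AC}v}_{L^p_{k,\de-1}} + \nm{v}_{L^p(K')}$ with a compact error term (which is equivalent to $D_{\AC}$ having closed range and finite kernel at weight $\de$, and is part of the Fredholm package you already invoke), applying it to $v_n - v_\infty$ and using that both $\nm{D_{\AC}v_n}\to 0$ and the local compactness kill the right-hand side already gives global strong convergence $v_n \to v_\infty$. The critical-weight shifting you sketch is a valid but more circuitous way to get the same conclusion. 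Second, your handling of the perturbation statement (the triangle-inequality absorption, giving uniform constant $2C_{\AC}$ for $s$ near $s_0$) is a genuine addition: the paper asserts uniformity but does not spell it out, and your argument is exactly the right way to get it, provided one checks, as you say, that $D_{\AC}^{\Phi_s}-D_{\AC}$ is small in operator norm $L^p_{k+1,\de}\to L^p_{k,\de-1}$; this uses $\eta<\la$ to ensure the perturbation of the coefficients decays fast enough to be a bounded (indeed small) conical operator at the relevant rate.
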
 
\begin{proof}
The map $D_\AC: L^p_{k+1, \de}  \longra L^{p}_{k, \de-1}$ is continuous by Proposition \ref{2_3_ac_structure} and has finite-dimensional co-kernel by the assumption on the weight $\de$. We claim that $\tilde{D} = D_{\AC}|_{\kappa_{\AC}^\perp}$ is an isomorphism onto its image, where the the orthogonal complement is taken with respect to the $L^2_{\de-\eps}$ inner product. Indeed it is injective by the construction of $\kappa_{\AC}$.  Moreover if $w \in \im \tilde{D}$, then we can find a pre-image $v \in L^p_{k+1, \de }(\nu(A))$ of $w$ as follows. Since $L^p_{k+1, \de }(\nu(A)) = \ka_{\AC}^\perp \op \ker D_\AC$, we can consider the $\ka_\AC^\perp$-component $v'$ of $v$, and note that $D_\AC v' = w$, thus proving surjectivity. Since $\tilde{D}$ is bijective and continuous, it admits a bounded inverse by the open mapping theorem for Banach spaces.
\end{proof}

Finally we discuss the conically singular case. The treatment is very similar, except that there may be more than one singular point. Let $N \subset (M, \Phi)$ be an $\CS_{\bar{\mu}}$ Cayley (having singular points $\{z_i\}_{1\le i \le l}$ asymptotic to the cones $C_i$ with rates $1< \mu_i <2$) for a $\Spin(7)$-structure $\Phi$. If $\{\Phi_s\}_{s\in\cS}$ is a family of smooth $\CS_\eta$ deformations of $\Phi$, then we consider the moduli space with moving points and cones: 

\begin{align*}
\cM_{\CS}^{\bar{\mu}}(N, \cS) = \{ (\tilde{N}, s): & \tilde{N} \subset (M, \Phi_s) \text{ is a } \CS_{\bar{\mu}}\text{-Cayley with singularities } \tilde{z}_1, \dots, \tilde{z}_l \\ &\text{ and cones } \tilde{C}_1, \dots, \tilde{C}_l, 
\text{. Here } \tilde{N} \text{ is isotopic to } N \text{, where }\\ &\text{ the isotopy takes } z_i \text{ to } \tilde{z}_i   \text{, and  } \tilde{C}_i \text{ is a deformation of } C_i \}.
\end{align*}

One can think about this moduli space as including deformations of rate $\mu = 1$ (rotations and Cayley deformations of the cones) and $\mu = 0$ (translations) manually. We have a structure theorem that is analogous to the asymptotically conical version \ref{2_3_ac_structure}.

\begin{thm}[{\cite[Thm. 4.32]{englebertDeformations}}]
\label{2_3_cs_structure}
There is a non-linear deformation operator $F_{\CS}$ which for $\ep> 0$ sufficiently small gives a $C^\infty$ map: 
\eas
F_{\CS}: \mathcal{L}_\eps = \{ v\in L^p_{k+1, {\bar{\mu}}} (\nu_\ep (N)), \nm{v}_{L^p_{k+1, {\bar{\mu}}}} < \ep \} \times \cC \longrightarrow L^{p}_{k, {\bar{\mu}-1}}(E).
\eas
Here $\cO$ is a smooth manifold that parametrises the deformations of the cones, the movement of the singular points, and the change in $\Spin(7)$-structure. Then a neighbourhood of $N$ in $\cM^{\bar{\mu}}_{\CS}(N, \cS)$ is homeomorphic to the zero locus of $F_{\CS}$ near $0$. Furthermore, if $\mu_i \not \in \cD_{L_i}$  for all $i$, the linearised operator is Fredholm and  we can define the \textbf{deformation space} $\cI^{\bar{\mu}}_{\CS}(N) \subset C^\infty_{\bar{\mu}} (\nu (N))\op T_{f_0}\cO$ to be the the kernel of $D_{\CS} = \D F_{\CS}(0)$, and the \textbf{obstruction space} $\cO^{\bar{\mu}}_{\CS}(N) \subset C^\infty_{4-\bar{\mu}}(E)$ to be the cokernel of $D_{\CS}$. Then a neighbourhood of $N$ in $\cM^{\bar{\mu}}_{\CS}(N, \cS)$ is also homeomorphic to the zero locus of a Kuranishi map: 
\eas
	K^{\bar{\mu}}_{\CS}:  \cI^{\bar{\mu}}_{\CS}(N) \longra \cO^{\bar{\mu}}_{\CS}(N).
\eas
In particular if $\cO^{\bar{\mu}}_{\CS}(N) = \{0\}$ is trivial, $\cM^{\bar{\mu}}_{\CS}(N, \cS) $ admits the structure of a smooth manifold near $N$. We say that $N$ is \textbf{unobstructed} in this case.
\end{thm}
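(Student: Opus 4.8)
The plan is to model a neighbourhood of $N$ in $\cM^{\bar\mu}_{\CS}(N,\cS)$ as the zero set of one smooth non-linear operator between weighted Sobolev spaces, and then perform a Lyapunov--Schmidt reduction against its Fredholm linearisation. The first task is to build the finite-dimensional parameter manifold $\cC$. The point is that the weighted space $L^p_{k+1,\bar\mu}(\nu(N))$ with each $\mu_i\in(1,2)$ is, by design, unable to see normal vector fields of asymptotic order $r^0$ or $r^1$ near a singular point $z_i$; so the translation of $z_i$ (a rate-$0$ deformation, realised through the $\Spin(7)$-compatible chart $\chi_i$), the deformation of the asymptotic cone $C_i=\R_+\times L_i$ (a rate-$1$ deformation, realised through the space of associative deformations of the link $L_i\subset S^7$), and the $\Spin(7)$-structure parameter $s\in\cS$ all have to be inserted by hand; together these data form a smooth manifold $\cC$ with basepoint $c_0$ corresponding to $(N,\Phi)$. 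Using cut-off functions interpolating between the conical ends near the $z_i$ and the compact core of $N$, I would then attach to each small $(v,c)\in L^p_{k+1,\bar\mu}(\nu_\ep(N))\times\cC$ a continuously embedded $\CS_{\bar\mu}$ submanifold $N_{v,c}\subset(M,\Phi_{s(c)})$, with $N_{0,c_0}=N$. A conical tubular-neighbourhood argument --- near each $z_i$ one normalises the asymptotics using $\Th_i$ and the cone deformation encoded in $c$ before applying the standard tubular neighbourhood on the compact part --- then shows that, for $\ep$ small, every $\CS_{\bar\mu}$-Cayley in the relevant neighbourhood of $N$ in $\cM^{\bar\mu}_{\CS}(N,\cS)$ equals $N_{v,c}$ for a unique small $(v,c)$.

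Next I would define the deformation operator by $F_{\CS}(v,c)=\pi_E\bigl(\star\,\iota_{v,c}^*\tau\bigr)$, i.e.\ the Cayley obstruction operator of \eqref{2_1_deformation_op} evaluated on $N_{v,c}$ and transported to a fixed bundle $E$ over $N$. For $(v,c)$ small, $N_{v,c}$ is $\al$-Cayley with $\al$ close to $1$, so Proposition \ref{2_1_deformation_op_prop} gives that $F_{\CS}(v,c)=0$ if and only if $N_{v,c}$ is Cayley; combined with the identification above, a neighbourhood of $N$ in $\cM^{\bar\mu}_{\CS}(N,\cS)$ is thus homeomorphic to $F_{\CS}^{-1}(0)$ near $0$. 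Smoothness of $F_{\CS}$ as a map of Banach manifolds follows because it is assembled from the exponential map, pullback of smooth tensors, the Hodge star and a bundle projection, and because $p>4$ gives $L^p_{k+1}\hookrightarrow C^0$, so that post-composition with smooth fibre maps and pointwise products are smooth on the relevant weighted Sobolev spaces. The delicate point is that $F_{\CS}$ must take values in $L^p_{k,\bar\mu-1}(E)$: this uses $F_{\CS}(0,c_0)=0$ (since $N$ is itself Cayley) together with the fact that, near each $z_i$, the coefficients of $\D F_{\CS}$ converge to those of the linearised Cayley operator on the model cone $C_i$, which is a conical elliptic operator of order $1$ and rate $1$ in the sense of \cite[Section 2.3.3]{englebertDeformations}; this asymptotic model computation must be carried out explicitly.

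For the linearisation $D_{\CS}=\D F_{\CS}(0,c_0)$, I would split the domain as $L^p_{k+1,\bar\mu}(\nu(N))\op T_{c_0}\cC$: on the first summand $D_{\CS}$ is the McLean-type (twisted Dirac) linearised Cayley operator (cf.\ \cite{mcleanDeformationsCalibratedSubmanifolds1998}), a conical elliptic operator of rate $1$; on the second it is a bounded linear map from a finite-dimensional space. By Theorem \ref{2_1_change_of_index} the first summand is Fredholm exactly when each $\mu_i$ avoids the discrete critical set $\cD_{L_i}$ determined by an eigenvalue problem on $L_i$ --- which is the hypothesis of the statement, with Example \ref{2_2_ex_weights} listing these sets in the cases of interest --- and adjoining a finite-dimensional domain summand preserves the Fredholm property. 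Hence $\cI^{\bar\mu}_{\CS}(N)=\Ker D_{\CS}$ and $\cO^{\bar\mu}_{\CS}(N)=\Coker D_{\CS}$ are finite-dimensional. A bootstrap from interior Schauder estimates on the smooth part together with the weighted elliptic estimates near each $z_i$ shows $\Ker D_{\CS}$ consists of smooth sections decaying like $r^{\mu_i}$, so $\cI^{\bar\mu}_{\CS}(N)\subset C^\infty_{\bar\mu}(\nu(N))\op T_{c_0}\cC$; dually, Proposition \ref{2_3_dual} identifies $\Coker D_{\CS}$ with the kernel of the formal adjoint in the conjugate weighted space, again consisting of smooth sections, giving $\cO^{\bar\mu}_{\CS}(N)\subset C^\infty_{4-\bar\mu}(E)$.

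Finally I would run the standard Lyapunov--Schmidt reduction. Choose closed complements so that $L^p_{k+1,\bar\mu}(\nu_\ep(N))\times\cC=\cI^{\bar\mu}_{\CS}(N)\op W$ and $L^p_{k,\bar\mu-1}(E)=\cO^{\bar\mu}_{\CS}(N)\op\im D_{\CS}$, with $D_{\CS}|_W\colon W\to\im D_{\CS}$ an isomorphism; write $F_{\CS}=(G,H)$ accordingly; solve $H(x+w)=0$ for $w=w(x)$ by the Banach-space implicit function theorem, getting $w$ smooth with $w(0)=0$ and $\D w(0)=0$; and set $K^{\bar\mu}_{\CS}(x):=G(x+w(x))$, a smooth map $\cI^{\bar\mu}_{\CS}(N)\to\cO^{\bar\mu}_{\CS}(N)$ vanishing to second order at $0$. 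Then $x\mapsto x+w(x)$ is a homeomorphism from $(K^{\bar\mu}_{\CS})^{-1}(0)$ onto $F_{\CS}^{-1}(0)$ near $0$, which with the earlier identification gives the asserted Kuranishi model; and if $\cO^{\bar\mu}_{\CS}(N)=\{0\}$ the zero set is just the vector space $\cI^{\bar\mu}_{\CS}(N)$, so $\cM^{\bar\mu}_{\CS}(N,\cS)$ is a smooth manifold near $N$ of dimension $\ind D_{\CS}$. I expect essentially all the work to be in the first two paragraphs --- fixing the parameter manifold $\cC$ and the interpolation construction, and verifying that $F_{\CS}$ is smooth, detects Cayleys, absorbs the moving singular points and cones as genuine finite-dimensional parameters, and lands in the correctly shifted weighted space $L^p_{k,\bar\mu-1}(E)$ --- after which the Fredholm input (Theorem \ref{2_1_change_of_index}) and the reduction are formal.
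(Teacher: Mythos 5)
This theorem is quoted from \cite{englebertDeformations} and not reproved in the present paper, so there is no proof here to compare against step by step; your outline --- inserting the rate-$0$ and rate-$1$ deformations (translations of the $z_i$, deformations of the cones/links) together with the $\Spin(7)$-parameter by hand into a finite-dimensional factor, identifying nearby $\CS_{\bar\mu}$-Cayleys with zeros of the operator built from $\tau$ as in Proposition \ref{2_1_deformation_op_prop} via a conical tubular neighbourhood, invoking the weighted Fredholm theory of conical operators (Theorem \ref{2_1_change_of_index}) at non-critical rates $\mu_i\notin\cD_{L_i}$, and concluding with a Lyapunov--Schmidt reduction to the Kuranishi map --- is exactly the approach taken in the cited companion paper. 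Your proposal is correct and essentially the same as the intended proof.
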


\begin{rem}
We can similarly define a non-linear deformation operator $F_{\CS, \text{cones}}$, where we do not allow the conically singular points to move. This can be done by restricting $\cO$ to only include deformations of the cone that keep the vertices fixed. We obtain a moduli space $\cM^{\bar{\mu}}_{\CS, \text{cones}}(N, \cS)$ of Cayleys which have the same conically singular points as $N$. Finally, we can restrict the moduli space even further to $\cM^{\bar{\mu}}_{\CS, \text{fix}}(N, \cS)$, where neither the points nor the cones are allowed to move. This corresponds to not adding in additional deformations of the cone into $\cO$, and we call the corresponding operator $F_{\CS, \text{fix}}$.
\end{rem}

We can now define a notion of pseudo-kernel as in \ref{2_3_ac_pseudo}. This is entirely analogous, except that the Sobolev embedding $L^2_{\de + \eps} \ra L^p_{k+1, \de}$ requires us to slightly increase the rate of the $L^2$ sections.

\begin{prop}
\label{2_3_cs_pseudo}
Suppose that $N$ is $\CS_{\bar{\mu}}$ to Cayley cones and $\alpha$-Cayley for $\al$ sufficiently close to $1$. Let $\de \in \R$ with $\de \not\in \cD_{L_i} $ not critical for any of the links of $N$ and suppose $p > 4$, $k\ge 1$ and $ \eps > 0$ small. Then here is a subspace $\ka_{\CS} \subset C^\infty_{c}(\nu(N))$  such that for any $v \in \ker D_{\CS} \subset L^p_{k+1,\de}(\nu(N))$ we have that if $v$ is $L^2_{\de +\eps}$-orthogonal to $\ka_{\CS}$, then $v$ must vanish. This subspace, called a \textbf{pseudo-kernel} can be chosen of dimension $dim\ker D_{\CS}$. 
\end{prop}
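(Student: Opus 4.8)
The plan is to mimic the proof of Proposition \ref{2_3_ac_pseudo} almost verbatim, with the only genuine change being the bookkeeping of weights forced by the direction of the Sobolev embedding on a conically singular end. First I would use that, since $\mu_i \notin \cD_{L_i}$ for every link of $N$ (guaranteeing Fredholmness via Theorem \ref{2_1_change_of_index}), the kernel $\ker D_{\CS} \subset L^p_{k+1,\de}(\nu(N))$ is finite-dimensional; pick a basis $\{a_1, \dots, a_m\}$ of it, with $m = \dim \ker D_{\CS}$. By the same density statement for conical Sobolev spaces used in the $\AC$ case (\cite[Cor.~4.5]{lockhartFredholmHodgeLiouville1987}, which holds for $\CS$ ends as well), each $a_j$ can be approximated arbitrarily well in the $L^p_{k+1,\de}$-norm by compactly supported smooth sections $\tilde a_j \in C^\infty_c(\nu(N))$. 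These $\tilde a_j$ span the desired pseudo-kernel $\ka_{\CS}$; since approximation preserves linear independence once the error is small enough, $\dim \ka_{\CS} = m$.

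The only subtlety is the choice of target space for the orthogonality condition. In the $\AC$ setting one embeds $L^p_{k+1,\de} \hookrightarrow L^2_{0,\de-\eps}$, so the pseudo-kernel is required to be $L^2_{\de-\eps}$-orthogonal to the kernel; in the $\CS$ setting the monotonicity in Theorem \ref{2_3_Sobolev_embedding_acyl_ac} runs the other way (clause ii/ii' with the $\CS$ inequalities $\tilde\de \le \de$ resp.\ $\tilde\de < \de$), so to keep a continuous embedding $L^p_{k+1,\de}(\nu(N)) \hookrightarrow L^2_{0,\de+\eps}(\nu(N))$ one must \emph{increase} the weight to $\de + \eps$. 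Hence I would impose that $v$ be $L^2_{\de+\eps}$-orthogonal to $\ka_{\CS}$. Concretely, suppose $v \in \ker D_{\CS}$ is $L^2_{\de+\eps}$-orthogonal to every $\tilde a_j$. Write $v = \sum_j c_j a_j$. Then for each $j$,
\e
0 = \langle v, \tilde a_j \rangle_{L^2_{\de+\eps}} = \langle v, a_j \rangle_{L^2_{\de+\eps}} + \langle v, \tilde a_j - a_j \rangle_{L^2_{\de+\eps}},
\e
and the second term is bounded by $\nm{v}_{L^2_{\de+\eps}}\,\nm{\tilde a_j - a_j}_{L^2_{\de+\eps}} \lesssim \nm{v}_{L^p_{k+1,\de}}\,\nm{\tilde a_j - a_j}_{L^p_{k+1,\de}}$ using the Sobolev embedding with the increased weight. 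Choosing the approximations $\tilde a_j$ fine enough, the matrix $\big(\langle a_i, a_j \rangle_{L^2_{\de+\eps}}\big)_{i,j}$, which is a Gram matrix of linearly independent elements hence invertible, is perturbed by an arbitrarily small amount, so the system forces $c_j = 0$ for all $j$, i.e.\ $v = 0$.

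The main obstacle — really the only place that needs care rather than transcription — is making sure the $L^2_{\de+\eps}$ pairing is actually finite on $\ker D_{\CS}$ and that all the norm comparisons above are legitimate; this is exactly what the $\CS$-version of Theorem \ref{2_3_Sobolev_embedding_acyl_ac} provides, via $k+1 - 0 \ge n(1/p - 1/2)$ together with $\de + \eps > \de$ (clause ii'). One should also note, as in Proposition \ref{2_3_ac_pseudo}, that elliptic regularity gives $\ker D_{\CS} \subset C^\infty$, so the elements $a_j$ really are smooth sections and the compactly supported approximation makes sense. Unlike the $\AC$ case there is no claim here about stability under perturbation of the $\Spin(7)$-structure, so that part of the argument can simply be omitted; everything else goes through unchanged.
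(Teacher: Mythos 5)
The core idea — finite-dimensional kernel, approximate a basis by compactly supported smooth sections, run a Gram-matrix perturbation argument — is exactly the right template and does match the intent of the paper's one-line remark. However, there is a genuine gap in the justification of the $L^2_{\de+\eps}$ pairing.

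You correctly transcribe clause ii') of Theorem \ref{2_3_Sobolev_embedding_acyl_ac} for the $\CS$ case as ``$\tilde\de < \de$,'' but then apply it with the inequality reversed: you claim that $L^p_{k+1,\de}(\nu(N)) \hookrightarrow L^2_{0,\de+\eps}(\nu(N))$ holds ``via $\de+\eps > \de$ (clause ii')''. That is the opposite of what the clause says. For $\tilde p = 2 < p$ on a conically singular end the theorem yields $L^p_{k+1,\de} \hookrightarrow L^2_{0,\de-\eps}$ (the target weight must \emph{decrease}); the embedding into $L^2_{0,\de+\eps}$ is simply false. Consequently the inequality $\nm{\tilde a_j - a_j}_{L^2_{\de+\eps}} \lesssim \nm{\tilde a_j - a_j}_{L^p_{k+1,\de}}$ on which the Gram-matrix perturbation step rests has no justification, and indeed a generic small element of $L^p_{k+1,\de}$ need not have finite $L^2_{\de+\eps}$-norm at all.

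The missing ingredient is the improved decay of elements of $\ker D_{\CS}$ near the singular points, which follows from the hypothesis $\de \notin \cD_{L_i}$ and the Fredholm/regularity theory for conical elliptic operators (Theorem \ref{2_1_change_of_index}, and the regularity statement cited from \cite{joyceReg}). Since $\de$ is not critical, kernel elements lie in $L^p_{k+1,\ga}$ for every $\ga$ up to the smallest critical weight $\de'' > \de$; in particular, for $\eps$ small enough (e.g.\ $2\eps < \de'' - \de$) they lie in $L^p_{k+1,\de+2\eps}$. You should therefore perform the density approximation of the basis $\{a_j\}$ in $L^p_{k+1,\de+2\eps}$ rather than $L^p_{k+1,\de}$, and then the \emph{correct} $\CS$ Sobolev embedding $L^p_{k+1,\de+2\eps} \hookrightarrow L^2_{0,\de+\eps}$ (which holds since $\de+\eps < \de+2\eps$) gives exactly the control you need on both $\nm{v}_{L^2_{\de+\eps}}$ and $\nm{\tilde a_j - a_j}_{L^2_{\de+\eps}}$. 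With that change the Gram-matrix argument goes through as written. The rest — finiteness of the kernel from Fredholmness, linear independence surviving small perturbation, elliptic regularity making the $a_j$ smooth, and omitting the $\Spin(7)$-perturbation clause which is absent from this proposition — is fine.
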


\begin{prop}
\label{2_3_cs_bound_k}
In the situation of Proposition \ref{2_3_cs_pseudo} there is a constant $C_{\CS}$ such that the following holds. If $v\in L^p_{k+1, \bar{\de}}(\nu(N))$ is $L^2_{\bar{\de} + \eps}$-orthogonal to $\ka_\CS$ then: 
\e
\nm{v}_{L^p_{k+1, {\bar{\de}}}} \le C_{\CS} \nm{D_{\CS} v}_{L^p_{k, {\bar{\de}-1}}}. 
\e
The same inequality is true for perturbations of $N$ with $\bar{\mu} \ge \bar{\de}$.
\end{prop}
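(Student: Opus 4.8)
The plan is to run the proof of Proposition~\ref{2_3_ac_bound_k} essentially word for word, with the pseudo-kernel $\ka_{\CS}$ of Proposition~\ref{2_3_cs_pseudo} in place of $\ka_{\AC}$, and then to deduce the perturbation statement by the elementary fact that ``bounded below on a fixed closed subspace'' is an open condition in the operator norm. Concretely, for $N$ itself: by Theorem~\ref{2_3_cs_structure} the map $D_{\CS}\colon L^p_{k+1,\bar{\de}}(\nu(N))\ra L^p_{k,\bar{\de}-1}(E)$ is continuous, and since $\bar{\de}$ avoids every $\cD_{L_i}$ it is Fredholm; in particular $\ker D_{\CS}$ is finite dimensional and $\im D_{\CS}$ is closed. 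The conditions cutting out $\ka_{\CS}^\perp$ are the $L^2_{\bar{\de}+\eps}$-pairings against the finitely many compactly supported smooth sections spanning $\ka_{\CS}$; by Proposition~\ref{2_3_dual} together with the Sobolev embedding $L^p_{k+1,\bar{\de}}\hookra L^2_{0,\bar{\de}+\eps}$ these are continuous functionals on $L^p_{k+1,\bar{\de}}$, so $\ka_{\CS}^\perp$ is a closed subspace of codimension $\dim\ka_{\CS}=\dim\ker D_{\CS}$.

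By the defining property of a pseudo-kernel, $\ka_{\CS}^\perp\cap\ker D_{\CS}=\{0\}$, and comparing dimensions with the codimension of $\ka_{\CS}^\perp$ gives a topological direct sum $L^p_{k+1,\bar{\de}}=\ka_{\CS}^\perp\op\ker D_{\CS}$. Hence $\tilde{D}:=D_{\CS}|_{\ka_{\CS}^\perp}$ is continuous, injective (again by the pseudo-kernel property), and surjective onto $\im D_{\CS}$ (given $w=D_{\CS}u$, split $u=u'+u''$ along the direct sum, so $D_{\CS}u'=w$). Thus $\tilde{D}$ is a continuous bijection between the Banach spaces $\ka_{\CS}^\perp$ and $\im D_{\CS}$, and the open mapping theorem provides a bounded inverse; its norm is the constant $C_{\CS}$, which is exactly the asserted estimate for $v\in\ka_{\CS}^\perp$.

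For the perturbation statement, let $N'$ be a nearby $\CS_{\bar{\mu}}$-submanifold with $\bar{\mu}\ge\bar{\de}$, identify $\nu(N')\cong\nu(N)$ and the relevant copies of $E$ via orthogonal projection as in Proposition~\ref{2_3_ac_pseudo}, and write $D_{\CS}^{N'}=D_{\CS}^{N}+E_{N'}$. The key analytic input is that the coefficients of $D_{\CS}$ depend continuously on the submanifold in a weighted sense, so that $\nm{E_{N'}}_{L^p_{k+1,\bar{\de}}\ra L^p_{k,\bar{\de}-1}}\to 0$ as $N'\to N$; this is precisely where the hypothesis $\bar{\mu}\ge\bar{\de}$ is used, since a $\CS_{\bar{\mu}}$ change of $N$ alters the symbol and lower-order coefficients of the linearised operator, relative to the conical model, by terms that are $O(r^{\bar{\mu}-\bar{\de}})$-controlled near the singular points and hence genuinely small — not merely bounded — in the weighted norm. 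Granting this, for $v\in\ka_{\CS}^\perp$ we get $\nm{D_{\CS}^{N'}v}\ge\nm{D_{\CS}^{N}v}-\nm{E_{N'}v}\ge(C_{\CS}^{-1}-\nm{E_{N'}})\nm{v}\ge(2C_{\CS})^{-1}\nm{v}$ once $N'$ is close enough, i.e.\ the inequality holds for $N'$ with constant $2C_{\CS}$; no surjectivity or pseudo-kernel persistence for $N'$ is needed, only the unperturbed bound and the smallness of $E_{N'}$.

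The main obstacle is this last weighted continuity claim: one must unwind the construction of $F_{\CS}$ and its linearisation near the cone points from \cite{englebertDeformations} and verify that the discrepancy between $D_{\CS}^{N'}$ and $D_{\CS}^{N}$ decays at rate $\bar{\mu}-\bar{\de}$ relative to the scaling-invariant model operator, so that it is small as a map between the weighted spaces. A secondary bookkeeping point is that after the orthogonal-projection identification the space $\ka_{\CS}^\perp$ for $N'$ differs from that for $N$ by an $L^2$-metric of order the size of the perturbation; this error is absorbed into the same smallness estimate. Everything else is formal Banach-space functional analysis.
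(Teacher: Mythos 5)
Your proof of the main estimate is essentially the paper's own: the paper only proves the $\AC$ analogue (Proposition \ref{2_3_ac_bound_k}) — injectivity of $D|_{\ka^\perp}$ from the pseudo-kernel property, surjectivity onto the image via the splitting $L^p_{k+1,\bar{\de}} = \ka_{\CS}^\perp \op \ker D_{\CS}$, closed range from Fredholmness at non-critical weights, then the open mapping theorem — and asserts the $\CS$ case by analogy; you reproduce this argument and in fact supply the codimension count justifying the direct sum, which the paper merely asserts. For the perturbation clause, which the paper states without proof, your triangle-inequality stability argument (lower bound on $\ka_{\CS}^\perp$ persists under an operator-norm-small error $E_{N'}$, no pseudo-kernel persistence needed) is the natural completion and is correct in outline, with the weighted smallness of $E_{N'}$ being, as you say, the one estimate that must be extracted from the construction of $D_{\CS}$ in \cite{englebertDeformations}. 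Two small blemishes, neither fatal: the embedding you cite, $L^p_{k+1,\bar{\de}} \hookra L^2_{0,\bar{\de}+\eps}$, is not what Theorem \ref{2_3_Sobolev_embedding_acyl_ac} gives on $\CS$ ends (there the $L^2$ weight must decrease), though continuity of the defining functionals is immediate anyway since $\ka_{\CS}$ consists of compactly supported sections away from the singular points; and the heuristic exponent $O(r^{\bar{\mu}-\bar{\de}})$ for the coefficient discrepancy is better stated as $O(r^{\bar{\mu}-1})$ for the first-order and $O(r^{\bar{\mu}-2})$ for the zeroth-order coefficients, which (together with $\bar{\mu}\ge\bar{\de}$, $\mu_i>1$) is exactly what makes $E_{N'}$ small as a map $L^p_{k+1,\bar{\de}} \ra L^p_{k,\bar{\de}-1}$.
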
 

The operator $F_{\CS}$ allows for the points of the singular cones to move. We could also fix the points while still allowing the links of the cones to deform, giving us an operator $F_{\CS, 0}$. We can give this operator the exact same treatment and reprove all the theorems in this section.

\section{Desingularisation of conically singular Cayleys}

We will now tackle the proof of the main Theorem \ref{1_1_Main_theorem}. We first describe a gluing construction which will yield an approximate Cayley desingularisation. Afterwards we describe an iteration scheme which will allows us to perturb the approximate Cayley to a true Cayley. We modify the construction from \cite{lotayDesingularizationCoassociative4folds2008} to work in families, and rework some analytic aspects to remove the requirements on the rate $\la$ of the asymptotically conical pieces.

\subsection{Approximate Cayley}

Let $(M, \Phi)$ be an almost $\Spin(7)$-manifold and let $\{\Phi_s\}_{s \in \cS}$ be a smooth family of deformations of $\Phi = \Phi_{s_0}$. Suppose $N$ is an unobstructed $\CS_{\bar{\mu}}$-Cayley in $(M, \Phi)$ with singular points $\{z_i\}_{i = 1, \dots, l}$. Let $\bar{\mu}$ be such that $(1, \mu_i] \cap \cD_{L_i} =  \emptyset$. Note that if the locus of singular points (which is smooth by unobstructedness) were to move by an ambient isotopy $I_s$, we can choose a new family $\{I_s^*\Phi_s\}_{s \in \cS}$ that leaves the singular locus invariant. Furthermore, we can also assume that $\Phi_s(z_i) = \Phi_{s_0}(z_i)$. For $B_\eta(0)$ the ball of radius $\eta > 0$ in $\R^8$, let $\chi_i : B_\eta(0) \ra M$ be a $\Spin(7)$-coordinate system centred around $z_i$. Recall that this means that $\chi_i$ is a parametrisation of a neighbourhood of $z_i$, such that $\chi_i(0) = z_i$ and $\D \chi_i|_0^* \Phi_{z_i} = \Phi_0$. After identifying $T_{z_i}M$ with $\R^8$ via the $\Spin(7)$-isomorphism $\D \chi_i|_0$, we let $(L_i, h_i) \subset (S^7, g_{\mathrm{round}})$ be the link on which the conical singularity is modelled. Assume it comes in a smooth, finite dimensional moduli space $\cM^{G_2}(L_i)$. For $1 \le i \le k \le l$ and  let $A_i$ be $\AC_\la$ Cayleys in $\R^8$ with the standard $\Spin(7)$-structure, with $\la < 1$ and $\cD_{L_i} \cap (\la, 1)= \emptyset$. Let the link of $A_i$ be $(L_i, h_i)$, and suppose we have chosen a scale function $t_i: \cM^\la_{\AC}(A_i) \ra \R$. We will now describe a procedure which allows us to glue elements of sufficiently small scale in $\overline{\cM}^\la_{\AC}(A_i)$ onto the first $k$ singular points of $N \in \cM^{\la}_\CS (N_0,\Phi_s)$, to produce Cayleys in $(M, \Phi_s)$ that are close to being singular. Here we need to make sure to glue compatible cones, as both moduli spaces allow for deformations of the cone. So assume that $A_i$ has a cone that is compatible with the singularity at $z_i$. In the gluing construction, the scale $t_i$ determines both the scaling of the $\AC$ piece $A_i$ as well as the inner radius of the annuli joining $A_i$ to $N$, which is comparable to $L_i \times (t_i r_0, R_0)$. In particular, when $t_i = 0$  (which corresponds to the cone in $\overline{\cM}^\la_{\AC}(A_i)$) we do not glue anything into the singularity at $z_i$. Recall that from the definition of a conically singular submanifold there is a compact set $K_N \subset N$ and decomposition $N = K_N \bigsqcup_{i = 1}^l U_i$ such that we have diffeomorphisms $\Psi_{\CS}^i : L_i \times(0, R_0) \ra U_i$. Choose $\eta$ and $R_0$ in such a way that the image of $\Psi_\CS^i$ is contained in $\chi_i(B_\eta(0))$. We can then factor $\Psi_\CS^i = \chi_i \circ \Th_\CS^i$, where $\Th_\CS^i$ is a smooth map $\Th_\CS^i: L_i \times (0, R_0) \ra B_\eta(0)$. For $1 \le i \le k$ there is a similar diffeomorphism $\Th_\AC^i: L_i \times (r_0, \infty) \ra A \setminus K_{A_i} \subset \R^8$, where $K_{A_i}$ is a compact subset of $A_i$, which can be chosen such that $\nm{\Th_\AC^i(p)-\iota_{i}(p)}_{\R^8} = O(\md{p}^{\la+1})$ as $p \ra \infty$. After reducing the scale of the $A_i$ we consider, we can assume that $r_0 < R_0$ and $A_i \setminus \Th^i_\AC(L_i \times (R_0, \infty))$ is contained in $B_\eta(0)$. In particular we can then also consider the map $\Psi_\AC^i|_{A_i \setminus L_i \times (R_0, \infty)} = \chi_i \circ \Th_\AC^i$. Now fix a smooth cut-off function $\phi: \R \ra [0, 1]$ with the property that:
\e
\label{4.1_cutoff}
\phi|_{(-\infty, \frac{1}{4}]} = 0, \quad \phi|_{[\frac{3}{4}, +\infty) } = 1.
\e

Let a constant $0 < \nu < 1$ be given. Assume $t > 0$ to be sufficiently small, so that we have the inequalities $0 < r_0t < \frac{1}{2} t^\nu < t^\nu < R_0 < 1$. Suppose that $\bar{A} = (A_1, \dots, A_k)$ is a collection of $\AC_\la$ manifolds (or cones) as above such that $t_i = t_i(A_i) \le t$. In this case we call $t$ the \textbf{global scale} of $\bar{A}$. We then define the subsets $N^{\bar{A}}$ of $M$ as follows: 
\ea
	\label{4.1_tentative_cayley}
	N^{\bar{A}} &= \bigg(N \setminus \bigsqcup_{i = 1}^l \Psi^i_\CS(L_i\times (0, t_i^\nu))\bigg) \sqcup \bigsqcup_{i = 1}^k \Psi^i_{\bar{A}}(L_i \times (r_0t_i, t_i^\nu)) \nonumber \\
	&\sqcup \bigsqcup_{i = 1}^k \chi_i(A_i \setminus \Th^i_\AC ( r_0t_i, \infty))\sqcup \bigsqcup_{i = k+1}^l \Psi^i_\CS(L_i\times (0, t_i^\nu)).
\ea

Here $\Th^i_{\bar{A}}$ is defined as the following interpolation between the $A_i$ and $U_i$ pieces:
\ea
 \Th^i_{\bar{A}}: &L_i \times (r_0t_i, R_0) \longra  \R^8 \nonumber \\
 		(p, s) &\longmapsto (1-\phi)\left(\frac{2s}{t_i^\nu}-1\right) \Th^i_\AC(p, s) + \phi\left(\frac{2s}{t_i^\nu}-1\right) \Th^i_\CS(p, s).
\ea
If we reduce the scale of a subset of the asymptotically conical pieces, the resulting family are desingularisations of $N$ where some tips shrink back to conically singular points. In particular, if $t_i = 0$, we should interpret the above definition as $\Th_{\bar{A}}^i = \Th_{\CS}^i$, thus the corresponding singularity is left as is, without gluing. As before, we also have maps $\Psi^i_{\bar{A}} = \chi_i \circ \Th^i_{\bar{A}}$, so we can work in local coordinates around a singularity. Notice that as $\phi$ is locally constant in neighbourhoods of $0$ and $1$, the $N^{\bar{A}}$ is in fact a smooth submanifold. For analytic purposes we usually consider $N^{\bar{A}}$ as a union of four parts: 
\begin{itemize}
\item[\ding{192}] $N^{\bar{A}}_u = \left(N \setminus \bigsqcup_{i = 1}^k \Psi^i_\CS\left(L_i\times \left(0, t_i^\nu\right)\right)\right)$.
\item[\ding{193}]  $N^{\bar{A}}_m = \bigsqcup_{i = 1}^k \Psi^i_{\bar{A}}(L_i \times (r_0t_i, t_i^\nu)) =\bigsqcup_{i = 1}^k  N^{A_i}_m$.
\item[\ding{194}]  $N^{\bar{A}}_l = \bigsqcup_{i = 1}^k \chi_i\left(A_i \setminus \Th^i_\AC \left(r_0t_i, \infty\right)\right)=\bigsqcup_{i = 1}^k  N^{A_i}_l $.
\item[\ding{195}]  $N^{\bar{A}}_{p} = \bigsqcup_{i = k+1}^l \bigsqcup_{i = 1}^l \Psi^i_\CS(L_i\times (0, t^\nu))$.
\end{itemize}

\begin{figure}
\begin{center}
\includegraphics[scale=0.7]{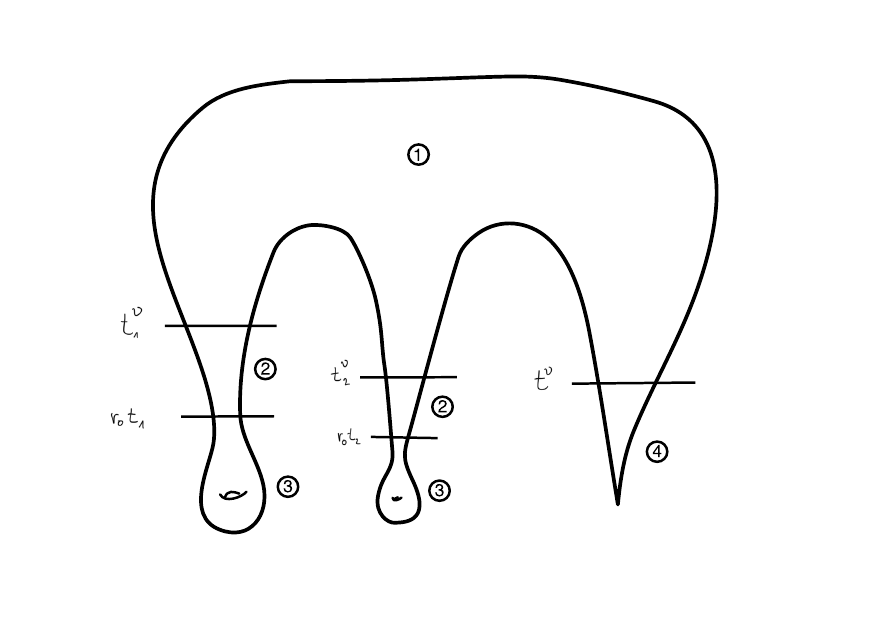}
\end{center}
\caption{Glued manifold}
\label{3_4_gluing}
\end{figure}

Notice that as we chose our family $\cS$ in such a way as to leave the singular locus unchanged, as well as the $\Spin(7)$-structure at the singular points, we can use the same frames $\chi_i$ for all deformations of the $\Spin(7)$-structure, and all nearby gluing data with matching cones. 

The reason for making the lower part shrink sub-linearly while the  tip shrinks linearly, is that $N^{\bar{A}}_l$ will stretch out and approximate any compact subset of the $A_i$ arbitrarily well as the scale is reduced. We now show that indeed this construction results in an approximation that is $C_1$-close to a Cayley in the following sense:

\begin{prop}
\label{4.1_alpha_cayley}
Let $\al \in (-1, 1)$ be given. Then if the global scale $t$ is sufficiently small, $N^{\bar{A}}$ is $\al$-Cayley.
\end{prop}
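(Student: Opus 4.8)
The plan is to show that the Cayley four-form $\Phi_s$ restricted to $N^{\bar A}$ is pointwise close (in the sense of being at least $\al$ times the volume form) to $\dvol_{N^{\bar A}}$, by splitting into the four regions \ding{192}--\ding{195} and estimating the defect $\dvol - \Phi_s|_{N^{\bar A}}$ on each. The key scaling observation is that $\Spin(7)$ and the Cayley calibration are invariant under the dilations of $\R^8$, so on regions where the glued manifold is a rescaled piece of an honest Cayley in $(\R^8, \Phi_0)$, the defect is exactly zero; all the error comes from (a) the ambient $\Spin(7)$-structure $\Phi_s$ differing from the flat $\Phi_0$ in the $\chi_i$-coordinates, (b) the submanifold differing from an honest Cayley, and (c) the interpolation in the neck region $N_m^{\bar A}$.

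First I would handle \ding{195} (and the part of \ding{192} away from all the singular points): here $N^{\bar A}$ agrees with $N$, which is already Cayley for $\Phi = \Phi_{s_0}$, so since $\{\Phi_s\}$ is a smooth family the defect is $O(|s - s_0|)$ uniformly, hence $< (1-\al)\dvol$ once $U \subset \cS$ is small enough — this part does not even need $t$ small. Next, the tip region \ding{194}: after applying $\chi_i^{-1}$ and rescaling by $t_i^{-1}$, this is exactly $A_i$, which is $\AC_\la$ Cayley in $(\R^8, \Phi_0)$, hence $\al'$-Cayley for any $\al' < 1$ by hypothesis (it is genuinely Cayley, $\al' = 1$); the only defect comes from $\chi_i^*\Phi_s - \Phi_0$, and in a ball of radius $O(t_i^\nu) \le O(t^\nu)$ around $z_i$ we have $|\chi_i^*\Phi_s - \Phi_0| = O(t^\nu)$ since $\D\chi_i|_0^*\Phi_{z_i} = \Phi_0$ and $\Phi_s(z_i) = \Phi_{s_0}(z_i)$ — this gives defect $O(t^\nu) \to 0$. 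Region \ding{192} near the cone, i.e. $\Psi_\CS^i(L_i \times (t_i^\nu, R_0))$, is part of $N$ hence Cayley for $\Phi_{s_0}$ with the same $O(|s-s_0|)$ correction for $\Phi_s$.

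The main obstacle is the neck region \ding{193}, $N_m^{A_i} = \Psi_{\bar A}^i(L_i \times (r_0 t_i, t_i^\nu))$, where the submanifold is the genuine interpolation $\Th_{\bar A}^i$ between $\Th_\AC^i$ and $\Th_\CS^i$. Here I would rescale by $t_i^{-1}$ and work at radius $\rho = s/t_i \in (r_0, t_i^{\nu-1})$. Both $\Th_\CS^i$ and $\Th_\AC^i$ converge to the common cone $\io_i$ — the first like $O(s^{\mu_i}) = O(t_i^{\mu_i}\rho^{\mu_i})$ as $s \to 0$ and the second like $O((t_i\rho)^{\la})$ after rescaling, wait — more precisely the rescaled $A_i$-piece is $t_i^{-1}\Th_\AC^i(\cdot, t_i\rho)$ which is $\rho^{-1}$-close... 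Let me say instead: on the neck, $s \le t_i^\nu$, so $\Th_\CS^i$ differs from $\io_i$ by $O(s^{\mu_i}) = O(t_i^{\nu\mu_i})$ in $C^1$ after the appropriate rescaling of the cone coordinate, and the rescaled $A_i$ differs from the cone by $O(t_i^{1-\nu})^{1-\la}$-type terms since $s \ge r_0 t_i$; also the cutoff derivatives $\phi'(2s/t_i^\nu - 1) \cdot 2/t_i^\nu$ are supported on $s \sim t_i^\nu$ and are $O(t_i^{-\nu})$, but they multiply differences of size $O(t_i^{\nu\mu_i})$ and $O(t_i^{\nu\la + (1-\nu)})$ respectively, which are small provided $\nu\mu_i - \nu > 0$ (true since $\mu_i > 1$) and $\nu\la + 1 - \nu > 0$ i.e. $1 - \nu(1-\la) > 0$ (true since $\nu < 1$, $\la < 1$). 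So the $C^1$-norm of $\Th_{\bar A}^i - \io_i$, measured against the conical scale, is $O(t^{\beta})$ for some $\beta > 0$. Combined with $|\chi_i^*\Phi_s - \Phi_0| = O(t^\nu)$ on this region and the fact that the cone $C_i$ is Cayley (so a 4-plane $C^1$-close to a tangent plane of $C_i$ has small defect, using that the Cayley condition is an open condition cut out by $\tau$ with controlled derivative — Proposition \ref{2_1_deformation_op_prop}), the defect on \ding{193} is $O(t^\beta + t^\nu) \to 0$.

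Finally I would assemble: choosing first $U$ small so the $|s-s_0|$ contributions on \ding{192}, \ding{195} are $< (1-\al)/2$, then $t$ small so all the $O(t^\beta)$, $O(t^\nu)$ contributions on \ding{193}, \ding{194} and the near-cone part of \ding{192} are $< (1-\al)/2$, we get $\Phi_s|_{N^{\bar A}} \ge \al \dvol_{N^{\bar A}}$ everywhere. One should also note that since the metric $g_{\Phi_s}$ also converges to the flat/conical one on each region, $\dvol_{N^{\bar A}}$ itself is uniformly controlled, so comparing $\Phi_s|_{N^{\bar A}}$ to $\dvol_{N^{\bar A}}$ rather than to the model volume form only introduces another $O(t^\beta + |s-s_0|)$ error, which is absorbed. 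The upshot is that all estimates reduce to: (i) smooth dependence on $s$, (ii) the dilation-invariance of $\Phi_0$ making the $A_i$ and $N$ pieces exactly Cayley in the model, and (iii) a quantitative openness of the Cayley condition near the common cone to control the interpolation — and the neck estimate in (iii) is where the interplay between the rates $\mu_i > 1$, $\la < 1$ and the gluing exponent $\nu \in (0,1)$ is essential.
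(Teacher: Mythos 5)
Your proposal takes essentially the same route as the paper: decompose $N^{\bar A}$ into the four regions, observe that the untouched $\CS$ pieces are exactly Cayley for $\Phi_{s_0}$, use the scale-invariance of $\Phi_0$ to handle the tip region, and control the neck by $C^1$-closeness of both $\Theta_\CS^i$ and the rescaled $\Theta_\AC^i$ to the common cone. Where you go beyond the paper is in explicitly tracking the cutoff-derivative term $\phi'(2s/t_i^\nu-1)\cdot 2t_i^{-\nu}\cdot(\Theta_\CS^i-\Theta_\AC^i)$ — the paper hides this inside a statement about linear interpolations of $\al'$-Cayley graphs over a Cayley that, as stated, would need a bound on the derivative of the interpolation parameter; your more quantitative version supplies exactly that, which is the right thing to do. One arithmetic slip: the rescaled $\AC$ deviation at radius $s\sim t_i^\nu$ is $O\bigl(t_i^{1-\la}s^{\la}\bigr)=O\bigl(t_i^{1-\la(1-\nu)}\bigr)$, not $O\bigl(t_i^{\nu\la+(1-\nu)}\bigr)$; after multiplying by the cutoff derivative $O(t_i^{-\nu})$ the decisive exponent is $(1-\nu)(1-\la)>0$, so the conclusion stands, but the condition you wrote down is not quite the one needed.
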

\begin{proof}
It is clear that $N^{\bar{A}}_u$ and $N^{\bar{A}}_{p}$ are always $\al$-Cayley, since they are subsets of $N$ and thus Cayley for the $\Spin(7)$-structure $\Phi_s$. Now for $N^{\bar{A}}_m$ and $N^{\bar{A}}_l$, we note that for $x\in \R^8$ near $0$ we have $(\D \chi_i)_x^* (\Phi_{\chi_i(x)}) = \Phi_0 + O(\nm{x})$. Thus for $t$ sufficiently small, we have for any $p \in N^{\bar{A}}_m \cup N^{\bar{A}}_l$ that $(\D \chi_i)_{\chi_i^{-1}(p)}^* (\Phi_p) = \Phi_0 + O(t^\nu)$. As $A_i$ is already Cayley for $\Phi_0$, it will also be $\al$-Cayley for $(\D \chi_i)_{\chi_i^{-1}(p)}^* (\Phi_p)$ for sufficiently small values of $t$. 

It remains to show that $N^{\bar{A}}_m$ is $\al$-Cayley for $t$ sufficiently small. Now by assumption on $N$ and the $A_i$, $\Th_\CS^i(p, s)$ and $t_i\Th_\AC^i(p, s)$ approach the same Cayley cone as long as $s \in (\frac{1}{2} t^\nu, t^\nu)$ and $t_i \longra 0	$, and thus the respective tangent planes become arbitrarily close to the same Cayley plane, in particular they will be $\al'$-Cayley for $t$ small enough and any $\al' > \al$. Now for every $\al$ there is an $\al' > \al$ such that if $\xi_1, \xi_2$ are $\al'$-Cayley graphs over a Cayley $\xi$, any linear interpolation of the between the maps having image $\xi_1$ and $\xi_2$ will have image an $\al$-Cayley. Thus $N^{\bar{A}}_m$ will also be $\al$-Cayley for $t$ small enough.
\end{proof}

Our goal is to construct Cayley submanifolds close to the almost Cayley submanifolds $N^{\bar{A}}$. To simplify the analytic details, we will introduce Banach spaces tailored to this particular desingularisation. These were first defined in \cite{lotayDesingularizationCoassociative4folds2008}. Before that, we extend our notion of a radius function to the $N^{\bar{A}}$,  combining the definitions of radius functions on $\CS$- and $\AC$-manifolds.
\begin{dfn}
A collection of \textbf{radius functions} on $N^{\bar{A}}$ for all $\bar{A}$ with global scale bounded by $t > 0$ is a smooth function $\rho: N^{\bar{A}} \ra [0, 1]$ such that:  
\e
\rho(x) = \left\{ 
\begin{array}{cc}
\Th(R_0), & x\in K_N \\
\Th(r_0t_i), & 1 \le i \le k, x\in \chi_i(A_i \setminus L_i \times (r_0t_i, \infty)) \\
\Th(s),  & 1 \le i \le k, x = \Psi^i_t(s, p) \text{ for some } p \in L_i \text{ and } s\in (r_0t_i,  R_0) \\
\Th(s),  & k < i \le l, x = \Psi^i_\CS(s, p) \text{ for some } p \in L_i \text{ and } s\in (0,  R_0) \\
\end{array}
\right.
\e
Here we mean by $\Th(f)$ something that is bounded on both sides by $f$, up to a constant	that is independent of the choice of $\bar{A}$. Furthermore over $\Psi^i_{\bar{A}}((r_0t_i,R_0)\times L_i)$ we require $\rho$ to be an increasing function of the radial component $s\in (r_0t_i, R_0)$.
\end{dfn}
Choose $\rho$ to be the distance in $M$ to the closest singular point of $N$ and modify away from the singular points such that the functions are bounded by $1$. This will be an example of a family of radius functions. From this we also see that we can choose the family to be smooth and have uniformly bounded derivative. We can now define alternative Sobolev-norms on $L^p_k$-spaces on $N^{\bar{A}}$ that take into account the scale of the glued pieces. Suppose $E$ is a metric vector bundle over $N^{\bar{A}}$ with a connection $\nabla$. Let $\bar{\de} \in \R^l$ be an arbitrary weights. We then define the $L^p_{k, \bar{\de},\bar{A}}$-norm of a section $s \in C^\infty(E)$ as: 
\e
 \nm{s}_{L^p_{k, \bar{\de}, \bar{A}}} = \left(\sum_{i = 0}^k\int_{N^{\bar{A}}}\md{\rho^{-w+i}\nabla^i s}^p \rho^{-4}\dvol\right)^{\frac{1}{p}} .
\e
Here $w: N^{\bar{A}} \longra \R$ is a smooth weight function that interpolates between the chosen weights near each singularity. If all singularities are removed, so that $N^{\bar{A}}$ is non-singular and compact, these norms are all uniformly equivalent for different values of $\bar{\delta}$,  but they are not uniformly equivalent in $\bar{A}$, in the sense that the comparison constant will be unbounded. As we reduce the global scale, these norms reduce over the glued pieces to the norms for conical manifolds we introduced above. This will allows us to transplant results for the conical parts $A_i$ and $N$ onto the glued $N^{\bar{A}}$. Near the singularities that we did not remove, this norm is exactly the weighted Sobolev norm for conically singular manifolds. We can define Hölder spaces that vary with ${\bar{A}}$, the $C^{k,\al}_{\de, \bar{A}}$-spaces, in a similar manner. We note that the Sobolev constants for different values of ${\bar{A}}$ will all be uniformly comparable.

\subsection{Estimates}
\label{4_3_Estimates}

Consider the approximate Cayleys $N^{\bar{A}}$ that we have defined above together with a family of radius functions $\rho$. For sufficiently small global scale $t$ we have by Proposition \ref{4.1_alpha_cayley} that $N^{\bar{A}}$ are $\alpha$-Cayley for any fixed $\alpha < 1$. Thus in particular $N^{\bar{A}}$ admits a canonical deformation operator as in \eqref{2_1_deformation_op}, which as in  \ref{2_3_cs_structure} can be augmented to include $\CS$ deformations of the unglued conical singularities as well as deformations of the $\Spin(7)$-structure: 
\e
F_{\bar{A}}: C^\infty_{\bar{\de}}(\nu_\ep(N^{\bar{A}})) \times \cO \longra C_{\bar{\de}-1}^\infty(E_{\cay}).
\e 
Here $\cO \subset \cS \times \cF$ is a small neighbourhood of the point which corresponds to the initial $\Spin(7)$-structure and the initial vertices and cones of $N^{\bar{A}}$. Moreover we define $\nu_\ep(N^{\bar{A}}) = \{(v,p) \in \nu(N^{\bar{A}}), \nm{v} < \ep \rho(p)\}$, similar to the $\CS$ and $\AC$ cases. The weights $\bar{\de} \in \R^l$ are chosen such that for $1 \le i \le k$, $1 < \de_i < \frac{\mu_i (\la-2)+1}{\la-\mu_i}$ and for $k+1 \le i \le l$ we set $\de_i = \mu_i$. We will explain later how this condition arises. Let us just note for now that $\lim_{\la \ra -\infty}\frac{\mu_i (\la-2)+1}{\la-\mu_i} = \mu_i$, thus the formula is compatible with "gluing a Cayley cone" (which is $\AC_\la$ for any $\la < 0$) onto a conical singularity, which should do nothing to the singularity. Also, increasing the rate of the asymptotically conical piece makes the allowed ranges of rates for the gluing smaller. In the following we will write $\mu = \min_i(\mu_i)$ and $\de = \min_i(\de_i)$. This may seem like a restriction, however as by assumption $(1, \mu_i] \cap \cD_{L_i} = \emptyset$, we do not lose anything by doing this. Any $\CS_{\mu}$ manifold can be improved to be $\CS_{\mu_i}$ as no critical weights are present in the range $(\mu, \mu_i)$ (\cite[Thm. 5.5]{joyceReg}) We denote the linearisation of $F_{\bar{A}}$ at $0$ by $D_{\bar{A}}$. We can now establish bounds on the glued deformations operators, using our results for the $\CS$ and $\AC$ case. In particular we will take into account the dependence of various constants on the parameter ${\bar{A}}$. This will be important later when we deform all the $N^{\bar{A}}$ simultaneously to become Cayleys. In this regard, the most important property of the deformation operator is its dependence on $N$, $v$ and $\Phi$. In particular we have pointwise dependence only on $p,v(p), \nabla v(p)$ and $T_pN$ as in the following proposition:
\begin{prop}[{\cite[Lem. 4.2]{englebertDeformations}}]
\label{3_2_F_shape}
The deformation operator on $N^{\bar{A}}$ for the varying $\Spin(7)$-structures and cone configuration can be written as follows, for $v \in C^\infty_{loc}(\nu(N^{\bar{A}}))$, $s \in \cO$ and $p\in N^{\bar{A}}$:
\ea
F_{\bar{A}}(v, s)(p) &= \boF(p, v(p), \nabla v(p), T_pN^{\bar{A}}, s)\nonumber \\
&=  F_{\bar{A}}(0, s)(p)+ D_{{\bar{A}}, s}v(p) + \boQ(p, v(p), \nabla v(p), T_pN^{\bar{A}}, s).
\ea
Here $D_{{\bar{A}}, s}$ is the linearisation of $F_{\bar{A}}(\cdot, s)$ at $0$ and $\boF,\boQ$ are smooth fibre-preserving maps: 
\eas
\boF, \boQ: TM_\eps \times_N (T^*M \otimes TM )_\eps \times \Cay_{\al}(M) \times \cO  \longra \boE_{\cay},
\eas 
where $\boE_{\cay} = \{(p, \pi, e): (p, \pi) \in \Cay_{\al}(M), e \in E_\pi\}$ and $\al$ is sufficiently large. Here we see both sides as fibre bundles over $\Cay_{\al}(M) \times \cO$. We define the map $Q_{\bar{A}}: C^\infty(\nu_\ep(N^{\bar{A}})) \times \cO \longra C^\infty(E_{\cay})$ as $Q_{\bar{A}}(v, s) = F_{\bar{A}}(v, s)-D_{{\bar{A}}, s}v$.
\end{prop}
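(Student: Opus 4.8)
This proposition is \cite[Lem. 4.2]{englebertDeformations} transplanted from a single compact or conically singular Cayley to the almost-Cayley $N^{\bar A}$, so the plan is to check that the hypotheses there are met and to track what each parameter contributes. First I would note that by Proposition \ref{4.1_alpha_cayley}, as soon as the global scale $t$ is small enough $N^{\bar A}$ is $\al$-Cayley for $\al$ as close to $1$ as we wish; hence the non-linear operator \eqref{2_1_deformation_op} is defined on a neighbourhood $\nu_\ep(N^{\bar A})$ of the zero section, and is elliptic at $0$ by Proposition \ref{2_1_deformation_op_prop}. The crucial point is that the defining formula $F(v)(p) = \pi_E\bigl(\star_{N^{\bar A}}\exp^*_v(\tau|_{N^{\bar A}_v})\bigr)(p)$ is purely local: evaluating it at $p$ uses only the base point $\exp_v(p)$ (a function of $p$ and $v(p)$), the tangent plane $T_{\exp_v(p)}N^{\bar A}_v$ (a function of $p$, $v(p)$, $\nabla v(p)$ and $T_pN^{\bar A}$), the value at $\exp_v(p)$ of the $E_{\cay}$-valued four-form $\tau$ determined by the ambient $\Spin(7)$-structure, and the pointwise algebraic operations $\star$ and $\pi_E$, the latter projecting onto $E_\pi$, which is well defined exactly for the $\al$-Cayley planes $(p,\pi)\in\Cay_\al(M)$ with $\al$ large. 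So there is a fibre-preserving $\boF$ over $TM_\ep\times_N(T^*M\otimes TM)_\ep\times\Cay_\al(M)$ with $F(v)(p)=\boF(p,v(p),\nabla v(p),T_pN^{\bar A})$, smooth because $\tau$, the exponential map, $\star$ and $\pi_E$ are.

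Next I would put in the remaining parameters. Deforming $\Phi$ to $\Phi_s$ ($s\in\cS$) replaces $\tau$ by the four-form $\tau_{\Phi_s}$ of $\Phi_s$, depending smoothly on $s$; the cone and vertex deformations parametrised by $\cF$ act only near the unglued singular points $z_i$ ($k<i\le l$), where they smoothly modify the embedding $\Th^i_\CS$ appearing in \eqref{4.1_tentative_cayley}, so they too enter only through a smooth dependence of the local data on the parameter. Packaging $\cO\subset\cS\times\cF$ into the domain as in Proposition \ref{2_3_cs_structure} then gives the smooth fibre-preserving map $\boF: TM_\ep\times_N(T^*M\otimes TM)_\ep\times\Cay_\al(M)\times\cO\longra\boE_{\cay}$, both sides regarded as bundles over $\Cay_\al(M)\times\cO$, with $F_{\bar A}(v,s)(p)=\boF(p,v(p),\nabla v(p),T_pN^{\bar A},s)$ by construction.

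The splitting is then the first-order Taylor expansion of $\boF$ in the fibre variables $(v(p),\nabla v(p))$ about the origin: the zeroth-order term is $F_{\bar A}(0,s)(p)$, the first-order term is by definition of the linearisation $D_{\bar A,s}v(p)$, and writing the remainder in Hadamard integral form exhibits a smooth fibre-preserving $\boQ$ of the same shape as $\boF$ with $Q_{\bar A}(v,s)(p)=F_{\bar A}(v,s)(p)-F_{\bar A}(0,s)(p)-D_{\bar A,s}v(p)=\boQ(p,v(p),\nabla v(p),T_pN^{\bar A},s)$, vanishing to second order in $(v,\nabla v)$.

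The one thing that genuinely needs care, beyond quoting \cite[Lem. 4.2]{englebertDeformations}, is that this pointwise presentation should be \emph{uniform in} $\bar A$, since the later estimates on $D_{\bar A}$ and $\boQ$ rely on exactly that. But this is immediate from the construction \eqref{4.1_tentative_cayley}: the frames $\chi_i$, the cut-off $\phi$ and the embeddings $\Th^i_\CS,\Th^i_\AC$ are fixed independently of $\bar A$, so in the coordinates $\chi_i$ around each $z_i$ the operator is verbatim the one of \cite[Lem. 4.2]{englebertDeformations} with ambient data restricted to a fixed neighbourhood of $z_i$, while over the compact piece $N^{\bar A}_u$ it is unchanged. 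I therefore expect no real obstacle here — only bookkeeping — with the substantive $\bar A$-dependence left to the propositions that follow.
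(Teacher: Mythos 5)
The proposal is correct and takes essentially the same route the paper relies on: Proposition~\ref{3_2_F_shape} is quoted from \cite{englebertDeformations} without an in-text proof, and the justification the paper offers (in the sentence immediately following, that ``the smooth maps $\boF$ and $\boQ$ only depend on the family of $\Spin(7)$ structures $\Phi_s$, and not on the Cayley submanifold'') is precisely the universality observation you make explicit. Your reconstruction — pointwise locality of $F$ in $(p, v(p), \nabla v(p), T_pN^{\bar A})$, smooth dependence on the parameter $s\in\cO$, first-order Taylor expansion for the splitting into $F_{\bar A}(0,s)+D_{\bar A,s}v+\boQ$, and the remark that the $\bar A$-dependence does not touch the universal bundles over $M$ — is faithful and fills in exactly the bookkeeping the paper delegates to the citation.
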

We stress that the smooth maps $\boF$ and $\boQ$ only depend on the family of $\Spin(7)$ structures $\Phi_s$, and not on the Cayley submanifold. The term $Q_{\bar{A}}$ contains the contributions of $v$ and $\nabla v$ which are quadratic and higher. Since $N^{\bar{A}}$ is both conically singular and has nonsingular regions of high curvature as the global scale decreases, we need to apply both the compact and the conically singular theory to prove the following:
\begin{prop}
\label{3_1_F_A_smooth}
Let $p > 4$, $k \ge 1$ and $1 < \de< \frac{\mu (\la-2)+1}{\la-\mu}$. Then the deformation map $F_{\bar{A}}$ is well-defined and $C^\infty$ as a map between Banach manifolds:
\e
F_{\bar{A}} : \cM_{\bar{A}} = \{ v\in L^p_{k+1, \de, {\bar{A}}}(\nu_\ep(N^{\bar{A}})) : \nm{v}_{L^p_{k+1,\de}} < \ep \} \times \cS \longrightarrow L^p_{k, \de-1}(E_{\cay}),
\e
whenever $\ep > 0$ is sufficiently small, and can be chosen the same for all  $\bar{A}$. Any $v \in L^p_{k+1, \de}(\nu_\ep(N^{\bar{A}}))$ such that $F_{\bar{A}}(v) = 0$ is smooth. Finally it is also Fredholm.
\end{prop}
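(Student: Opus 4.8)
The plan is to prove the four assertions of Proposition \ref{3_1_F_A_smooth} --- well-definedness, $C^\infty$-smoothness, regularity of zeroes, and the Fredholm property --- by transplanting the corresponding facts from the compact theory (Theorem \ref{2_3_compact_structure}) on the ``body'' of $N^{\bar A}$ and from the conically singular / asymptotically conical theory (Theorems \ref{2_3_ac_structure} and \ref{2_3_cs_structure}) on the necks and tips, using the pointwise structure of $F_{\bar A}$ recorded in Proposition \ref{3_2_F_shape} to patch these local statements together. The key point throughout is that $F_{\bar A}(v,s)(p)$ depends only on $p$, $v(p)$, $\nabla v(p)$, $T_pN^{\bar A}$ and $s$, so that on each of the four regions \ding{192}--\ding{195} the operator literally agrees with the deformation operator already analysed in the relevant earlier theorem, after rescaling the $\AC$ piece $A_i$ by $t_i$ in region \ding{194}. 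The $L^p_{k,\bar\delta,\bar A}$-norms are built precisely so that this rescaling is an isometry (up to $\bar A$-uniform constants), which is why the resulting $\varepsilon$ and the smoothness can be taken uniform in $\bar A$.

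\textbf{Well-definedness and smoothness.} First I would fix a radius function $\rho$ as above and verify, using the decay of $N^{\bar A}$ towards its cones and $\AC$ models together with the $C^k_{\bar\delta,\bar A}$-continuity of the fibre-preserving maps $\boF,\boQ$ of Proposition \ref{3_2_F_shape}, that $F_{\bar A}$ maps a small $L^p_{k+1,\delta,\bar A}$-ball of the normal bundle into $L^p_{k,\delta-1,\bar A}(E_{\cay})$. Since $p>4$ and $k\ge 1$, the Sobolev embedding $L^p_{k+1,\delta,\bar A}\hookrightarrow C^1_{\delta,\bar A}$ (Theorem \ref{2_3_Sobolev_embedding_acyl_ac}, applied region-by-region with $\bar A$-uniform constants) gives the $C^1$-bound on $v$ and $\nabla v$ needed to land inside the domain of $\boF$ and to apply Proposition \ref{2_1_deformation_op_prop}, so $\varepsilon$ can indeed be chosen uniformly. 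Smoothness of $F_{\bar A}$ as a map of Banach manifolds then follows from the smoothness of $\boF$ in its finite-dimensional arguments together with the fact that composition with a fixed smooth fibre map and the Sobolev multiplication $L^p_k\times L^p_k\to L^p_k$ (valid since $p>4=\dim N^{\bar A}$) are smooth operations; this is exactly the argument used in Theorems \ref{2_3_compact_structure}, \ref{2_3_ac_structure}, \ref{2_3_cs_structure}, now carried out on $N^{\bar A}$.

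\textbf{Regularity and Fredholmness.} Elliptic regularity of zeroes is local: near a point of region \ding{192} or of a non-removed singularity \ding{195} we are exactly in the compact, resp.\ $\CS$, situation and the regularity statements of Theorems \ref{2_3_compact_structure} and \ref{2_3_cs_structure} apply verbatim; near the necks \ding{193} and rescaled tips \ding{194} we use that, after undoing the $t_i$-rescaling, $F_{\bar A}$ agrees with the $\AC$ deformation operator of Theorem \ref{2_3_ac_structure} on the $\alpha$-Cayley $A_i$, whose zeroes are smooth, and ellipticity at the zero (Proposition \ref{2_1_deformation_op_prop}) combined with standard bootstrapping upgrades $L^p_{k+1,\delta,\bar A}$ solutions to $C^\infty$. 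Finally, for the Fredholm property I would observe that $D_{\bar A}$ is an elliptic conical operator in the sense of Theorem \ref{2_1_change_of_index} on the non-removed ends (with weights $\delta_i=\mu_i\notin\cD_{L_i}$, the hypothesis $(1,\mu_i]\cap\cD_{L_i}=\emptyset$ guaranteeing non-criticality), and elliptic of bounded geometry on the compact-with-long-necks part; patching the parametrices from the $\CS$, $\AC$ and compact pieces --- exactly the construction used to prove Propositions \ref{2_3_ac_bound_k} and \ref{2_3_cs_bound_k} --- produces a parametrix for $D_{\bar A}$ with compact remainder, hence Fredholmness.

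\textbf{Main obstacle.} The routine parts are the Sobolev multiplication and the composition-with-$\boF$ arguments; the genuine difficulty is making every constant --- the radius $\varepsilon$ of the domain ball, the Sobolev and elliptic constants, and the parametrix remainder bounds --- \emph{uniform in $\bar A$ as the global scale $t\to 0$}, since the neck region \ding{193} degenerates and the tips \ding{194} develop unboundedly large curvature. This is precisely what the $\bar A$-dependent norms $L^p_{k,\bar\delta,\bar A}$ built from the family of radius functions are designed to control: under the natural dilation identifying $N^{A_i}_l$ with a large compact piece of $A_i$, these norms are $\bar A$-uniformly equivalent to the fixed weighted norms on $A_i$, so the interface estimates in the neck --- where one must compare the $\CS$-weighted norm downstairs with the $\AC$-weighted norm upstairs --- are exactly where the constraint $1<\delta_i<\frac{\mu_i(\lambda-2)+1}{\lambda-\mu_i}$ on the weights enters, ensuring that the cut-off error $[\,\phi,D\,]$ has the right decay to be absorbed. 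Establishing this interface estimate with $t$-independent constant is the crux, and the bulk of the work; once it is in hand, the four claims follow by assembling the already-proven conical and compact results.
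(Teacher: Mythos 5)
Your proposal follows essentially the same route as the paper's own proof, which is a brief sketch appealing to Theorem \ref{2_3_cs_structure} with norms replaced by their $\bar A$-dependent counterparts: for each fixed $\bar A$ the manifold $N^{\bar A}$ is a genuine $\CS$ manifold (with ends only at the non-desingularised points), so well-definedness, smoothness, elliptic regularity of zeroes and Fredholmness are direct imports of the $\CS$ machinery, and the only new content is $\bar A$-uniformity of the Sobolev and domain constants, which the $L^p_{k,\de,\bar A}$ norms are designed to provide. Your region-by-region transplant is a more explicit rendering of the same idea, and your observation that the rescaling by $t_i$ is (uniformly) an isometry for these norms is exactly the mechanism that makes the constants uniform; the parametrix patching you describe under ``Fredholmness'' is more than is needed for that claim at fixed $\bar A$ (Fredholmness there is plain $\CS$ theory), but it is the mechanism the paper uses for the uniform estimates, so it is not a wrong turn.

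One inaccuracy to flag: in your ``main obstacle'' paragraph you attribute the weight condition $1<\de_i<\frac{\mu_i(\la-2)+1}{\la-\mu_i}$ to the cut-off error $[\phi,D]$ in the parametrix / interface estimate. In fact the commutator estimate (Lemma \ref{4.2_bilinear_nabla_product_formula}) gives a bound of size $C/\lvert\log t\rvert$ that holds for any $\de$ and needs no upper bound on the weight. The upper bound on $\de$ plays no role in establishing Proposition \ref{3_1_F_A_smooth}; it is stated here for consistency because it is needed later, in the initial error estimate \eqref{4.2_tau_estimate}, to guarantee $\nm{F_{\bar A}(0)}_{L^p_{k,\de-1,\bar A}}\to0$ as $t\to0$, which is what the iteration scheme requires. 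Worth separating these two roles cleanly: the smoothness/Fredholmness claims hold for any non-critical $\de>1$, while $\de<\frac{\mu(\la-2)+1}{\la-\mu}$ (together with the specific choice $\nu=\frac{\la-1}{\la-\mu}$) is what lets the pregluing error vanish.
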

The proof of the smoothness of $F_{\bar{A}}$ is essentially the same as for \ref{2_3_cs_structure}, with all the norms replaced by their appropriate counterparts. As in the  usual deformation theory it relies on separate estimates of the first few terms of the Taylor expansion of $F_{\bar{A}}$, where we will now need to take into account the dependence on ${\bar{A}}$. Next, as the Hölder space $C^{k, \al}_{\de, {\bar{A}}}$ for a fixed ${\bar{A}}$ can be seen as $C^{k, \al}_{\de}$ for a conically singular manifold, usual elliptic regularity arguments apply and and show smoothness, such as for instance in the proof of \ref{2_3_cs_structure}. Let us now in turn take a look at the constant, linear and quadratic estimates of $F_{\bar{A}}$ and pay close attention to the constants involved.

\subsubsection*{\texorpdfstring{Estimates for $\tau$}{Estimates for tau}}

We first investigate how well $N^{\bar{A}}$ approximates a Cayley as a function of the global scale $t$. Our main result will be that a priori $N^{\bar{A}}$ should converge to an ideal Cayley in $C^\infty_{\de, {\bar{A}}}$ for $1 < \de < \frac{\mu (\la-2)+1}{\la-\mu}$, uniformly in $\bar{A}$.

\begin{prop}[Pointwise estimates]
\label{4.2_pointwise_tau}
Denote by $g^{\bar{A}}$ the Riemannian metric on $N^{\bar{A}}$ coming from the embedding into $M$. For $t$ sufficiently small and for $s \in \cS$ sufficiently close to our initial $\Spin(7)$-structure, we have the following estimates on the derivative $\nabla^k\tau|_{N^{\bar{A}}}$ for $k \ge 0$:
\ea
\md{\chi_i^*\tau|_{N^{\bar{A}}}}(x)&\lesssim \md{x},  \label{4.2_tau_N_l_0}\\
\md{\nabla^k\chi_i^*\tau|_{N^{A_i}}}(x)&\lesssim t_{i}^{-k+1}, \quad k \ge 1 \label{4.2_tau_N_l_12}\\
\md{\nabla^k\tau|_{N^{A_i}_m}} &\lesssim  t_i^{-\la}\rho^{\la-k-1} + \rho^{\mu-j-1} ,\quad \rho \in (r_0t_i, \frac{1}{4}t_i^\nu) \label{4.2_tau_N_m_u} \\
\md{\nabla^k\tau|_{N^{A_i}_m}} &\lesssim  t_i^{-k\nu}(t_i^{(\nu-1)(\la-1)}+ t_i^{\nu(\mu-1)}) ,\label{4.2_tau_N_m_l} \\
\md{\nabla^k\tau|_{N^{\bar{A}}_u}} & \lesssim d(s, s_0). \label{4.2_tau_N_u} 
\ea
Here $\nabla$ and $\md{\cdot}$ are computed with respect to $\chi^*g^{\bar{A}}$ in the first line, and $g^{\bar{A}}$ in the last two lines. Furthermore, the constants hidden in the $\lesssim$-notation are independent of $\bar{A}$. 
\end{prop}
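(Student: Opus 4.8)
The strategy is to treat each of the four regions ①–④ of $N^{\bar A}$ separately, reducing each estimate to known asymptotics of $N$ (as a $\CS_{\bar\mu}$-Cayley) or of $A_i$ (as an $\AC_\la$-Cayley), together with the fact that the $\Spin(7)$-form pulls back in $\Spin(7)$-coordinates as $\chi_i^*\Phi_{\chi_i(x)} = \Phi_0 + O(\md{x})$, so $\tau$ vanishes to first order at the singular point and $\nabla^j\tau = O(\md x^{1-j})$ there. The organizing principle is: $\tau$ measures the failure of a $4$-plane to be Cayley, and it vanishes identically on Cayley planes; so on pieces that are literally Cayley for $\Phi_0$ (the scaled $A_i$), the only contribution to $\tau|_{N^{\bar A}}$ comes from the deviation of $\Phi$ from $\Phi_0$, while on pieces inside $N$ (region ①, ④) the only contribution comes from the deviation of $\Phi_s$ from $\Phi_{s_0}$, which is $O(d(s,s_0))$.

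First I would dispose of regions ① and ④. On $N^{\bar A}_u$ the submanifold is a subset of $N$, hence Cayley for $\Phi_{s_0}$; writing $\tau$ for the structure $\Phi_s$ as $\tau_{s_0} + (\tau_s - \tau_{s_0})$ and noting $\tau_{s_0}|_N = 0$ together with $\nabla^k$ of this identity, smoothness of the family $\{\Phi_s\}$ gives \eqref{4.2_tau_N_u}. Region ④ (the untouched singularities) is literally part of $N$ with its $\CS$ asymptotics and no gluing has occurred, so nothing needs to be proved there beyond what is already recorded for $\CS$-Cayleys; these points do not even appear in the displayed inequalities. For region ③, the tip $N^{A_i}_l = \chi_i(t_i\text{-rescaling of } A_i)$: here $A_i$ is Cayley for $\Phi_0$, so $\chi_i^*\tau|_{N^{A_i}_l}$ is controlled purely by $\chi_i^*\Phi_{\chi_i(x)} - \Phi_0 = O(\md x)$, giving \eqref{4.2_tau_N_l_0}; differentiating, each derivative of the metric and of $\chi_i^*\Phi$ on the rescaled piece costs a factor $t_i^{-1}$ (this is the scaling behaviour of the $\AC$ end under $x\mapsto t_i x$), and since $\tau$ itself is already $O(\md x)=O(t_i\cdot(\text{unit-scale size}))$ we land on $t_i^{-k+1}$ for $k\ge 1$, which is \eqref{4.2_tau_N_l_12}. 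The only subtlety is bookkeeping the rescaling: one works in the rescaled coordinate $\tilde x = x/t_i$ where $A_i$ looks unit-size, applies the unit-scale bound $\md{\nabla^k_{\tilde g}\tau} = O(\md{\tilde x})$, and then converts $\nabla_{\tilde g}, \md{\cdot}_{\tilde g}$ back to $\nabla_{g^{\bar A}}, \md{\cdot}_{g^{\bar A}}$ picking up the stated powers of $t_i$.

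The heart of the proof, and the main obstacle, is the neck region ② $= N^{A_i}_m = \Psi^i_{\bar A}(L_i\times(r_0 t_i, t_i^\nu))$, where $N^{\bar A}$ is the linear interpolation $(1-\phi)\Th^i_\AC + \phi\,\Th^i_\CS$ between the $\AC$ end of (rescaled) $A_i$ and the $\CS$ end of $N$. I would split the neck into the inner subannulus $\rho\in(r_0t_i,\tfrac14 t_i^\nu)$, where $\phi\equiv 0$ and the submanifold is exactly the scaled $\AC$ end of $A_i$, and the transition/outer subannulus $\rho\in(\tfrac14 t_i^\nu, t_i^\nu)$. On the inner annulus, the $\AC_\la$ asymptotics of $A_i$ say that its end deviates from the Cayley cone by $O(\rho^\la)$ after scaling by $t_i$, i.e. $O(t_i^{-\la}\rho^\la)$ in the appropriate normalization, while the ambient $\Phi$ deviates from $\Phi_0$ by $O(\rho)$ near the singularity — but the cone itself is Cayley, so $\tau$ restricted to the true cone vanishes and we must estimate $\tau$ on a $C^1$-perturbation of the cone; Taylor-expanding $\tau$ in the $\AC$-deviation and in the $\rho$-deviation of $\Phi$ yields the two terms $t_i^{-\la}\rho^{\la-1} + \rho^{\mu-1}$ at order $k=0$, and each derivative along the annulus costs $\rho^{-1}$, giving \eqref{4.2_tau_N_m_u}. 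On the transition annulus, one must additionally control the interpolation: since $\Th^i_\AC(p,s)$ (rescaled) and $\Th^i_\CS(p,s)$ both converge to the same Cayley cone as $t_i\to 0$, with differences $O(t_i^{-\la}s^{\la})$ and $O(s^\mu)$ respectively, and since $\phi$ and its derivatives (in the $s$-variable, where $\phi$ depends on $2s/t_i^\nu-1$) contribute factors bounded by $t_i^{-k\nu}$, one substitutes the worst-case $s\sim t_i^\nu$ into the convergence rates: $t_i^{-\la}(t_i^\nu)^{\la} = t_i^{\nu\la - \la}$ should read $t_i^{(\nu-1)(\la-1)}$ after the correct normalization of $\tau$ as a $(\la-1)$-homogeneous-type quantity (this is exactly the exponent bookkeeping I expect to be delicate), and $(t_i^\nu)^{\mu-1} = t_i^{\nu(\mu-1)}$, yielding \eqref{4.2_tau_N_m_l}. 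The main work, and where I would be most careful, is (i) keeping the normalization of $\tau$ and its derivatives consistent between the $t_i$-rescaled picture and the ambient picture so that the exponents $(\nu-1)(\la-1)$ and $\nu(\mu-1)$ come out exactly right, and (ii) verifying that all implied constants depend only on the $\CS$-data of $N$, the $\AC$-data of $A_i$ at unit scale, the cutoff $\phi$, and the bound on $\md{R}$ — hence are uniform in $\bar A$ — which follows because every estimate has been reduced to a unit-scale statement about fixed objects before rescaling.
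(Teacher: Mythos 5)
Your plan follows essentially the same strategy as the paper's proof: decompose $N^{\bar A}$ into the four regions, exploit that each piece is Cayley for either $\Phi_{s_0}$ (regions \ding{192}, \ding{195}) or $\Phi_0$ (region \ding{194}), Taylor-expand the ambient structure around the singular point, and track how all the bounds behave under the $t_i$-rescaling. The treatment of the neck via the cutoff's derivative cost $t_i^{-l\nu}\partial^l\phi$ and the substitution $\rho\sim t_i^\nu$ in the outer subannulus also matches the paper. One place to be more careful, which you flag yourself: the rescaled $\AC$-deviation is $|\nabla^j(\Th^i_\AC - \io)| = O(t_i^{1-\la}s^{\la-j})$, not $O(t_i^{-\la}s^{\la})$ (the extra factor of $t_i$ comes from rescaling the ambient distance, and the loss of $s^{-j}$ from each covariant derivative); this is what makes the outer exponent land exactly on $(\nu-1)(\la-1)$. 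And for \eqref{4.2_tau_N_l_12} your heuristic ``$\tau$ is already $O(|x|)$ so one power of $t_i$ is saved'' is the right intuition, but the paper makes it precise by writing $\chi_i^*\tau - \tau_0$ as a map $\tilde\tau(v,\om)$ with $\tilde\tau(0,\om)=0$ for all tangent planes $\om$, Taylor-expanding $\tilde\tau(v,\om)=L_\om[v]+R_{\om,v}[v\ot v]$, and then differentiating $\tilde\tau\circ f_{t_i}$ — without this the naive scaling $t_i^{-k}$ is all one can claim. Also a small attribution slip: in the inner subannulus the $\rho^{\mu-k-1}$ term comes from the $\CS$-deviation $|\Th^i_\CS - \io|=O(s^\mu)$, not from the $O(\rho)$-deviation of $\Phi$ from $\Phi_0$ (the latter is what produces the $t_i^{-k\nu}$-type term via the same argument used on $N^{\bar A}_l$).
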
 
\begin{proof}
We adapt the method of proof from Proposition 8.1. in \cite{lotayDesingularizationCoassociative4folds2008}. Note first that $N^{\bar{A}}_u$  are Cayley by construction for our initial $\Spin(7)$-structure, and therefore $\tau_{s_0}$ and all its derivatives vanish on them. As $N^{\bar{A}}_u$ is compact, we can easily get the bound \eqref{4.2_tau_N_u}. 

Consider next $N^{\bar{A}}_l$. In what follows we can think of the conically singular points of $N^{\bar{A}}$ as being obtained by gluing in a Cayley cone, and thus can be treated no differently from the desingularised regions. We have by Taylor's theorem that:
\eas
\md{\chi_i^*\tau}(x) = \md{\chi_i^*\tau}(0) + O(\md{x}).
\eas
We have chosen $\chi_i$ to be a $\Spin(7)$-coordinate system, so that $\chi_i^*\Phi(0) = \Phi_0$, where $\Phi_0$ is the standard Cayley form on $\R^8$. We therefore also have $\chi_i^*\tau(0) = \tau_0$, where $\tau_0$ is the standard quadruple product on $\R^8$. Now since $\chi_i^{-1}(N^{\bar{A}}_l)$ is Cayley with respect to $\Phi_0$, we get that: 
\eas
\md{\chi_i^*\tau|_{N^{\bar{A}}_l}}(x) = \md{\tau_0|_{N^{\bar{A}}_l}}(0) + O(\md{x}) = O(\md{x}).
\eas
Thus we get \eqref{4.2_tau_N_l_0}. Now for $k\ge 1$, we have $\md{\nabla^k \tau_0|_{{N^{\bar{A}}_l}}} = 0$, as the $A_i$ are Cayley for $\Phi_0$. So we would like to bound: 
\eas
\md{\nabla^k (\chi_i^*\tau-\tau_0)|_{{N^{\bar{A}}_l}}}.
\eas 

For $t > 0$, think of $tA_i$ as a map $f_t: A_i \longra \R^8 \times \La^4$ which maps $p \in A_i \longmapsto (tp, T_pA_i)$, and of $\chi_i^*\tau-\tau_0$ as a map $\tilde{\tau}: \R^8 \times \La^4 \longmapsto \La^2_7$ with the property that $\tilde{\tau}(0, \omega) = 0$.  We therefore have a Taylor expansion for small $v \in \R^8$:
\eas
\tilde{\tau}(v, \omega) = L_\om[v]+ R_{\om, v}[ v \otimes v]. 
\eas
Here $L_\om$ and $R_{\om, v}$ are linear maps that depend smoothly on $\omega \in \La^4$, and $R_{\om, v}$ also depends smoothly on $v$. Thus, we see that: 
\eas
\tilde{\tau} \circ f_t(p) = t L_{T_pA}[ p] + t^2R_{T_pA, p} [ p \otimes p],
\eas 
From which we can deduce that:
\eas
\nabla_\xi (\tilde{\tau} \circ f_t)(p) &= t(L_{T_pA} [ \xi] + \D L_{T_pA} [ p, \nabla_\xi T_pA ]) \\
&+  t^2(2 R_{T_pA, p} [ \xi \otimes p] + \D R_{T_pA, p} [ p\otimes p, \nabla_\xi T_pA, \xi ]). 
\eas
The linear maps and their derivatives can be bound uniformly, as both $p \in B_\eta(0)$ and $T_pA$ vary in compact sets. Thus we see that:
\eas
\md{\nabla (\tilde{\tau} \circ f_t)(p)} \le C(A, \tau)(t + t^2(\md{p} + \md{p}^2 \md{\nabla T_pA})) \le C(A, \tau)t.
\eas
Here we used the fact that $\md{\nabla T_pA} \in O(\md{p}^{-1})$ and $\md{p}\in O(1)$. Thus going back to our original situation, after rescaling by $t_i$ to account for the fact that the metric on $tA_i$ scales as well, we obtain: 
\eas
\md{\nabla^k \chi_i^*\tau|_{{N^{\bar{A}}_l}}} = \md{\nabla^k (\chi_i^*\tau-\tau_0)|_{{N^{\bar{A}}_l}}} \lesssim 1.
\eas
The higher derivatives can be deduced the same. The key point is that naively rescaling will lead to a factor $t_i^{-k}$, but since the $A_i$ are Cayley, we can improve it by one factor of $t_i$ via the above Taylor expansion argument. 

Finally, we consider $N^{\bar{A}}_m$, where the interpolation happens and where we also expect the biggest error to appear. We will consider $(\Psi^i_{\bar{A}})^* \tau |_{N^{\bar{A}}_m}$, which is a form on the cone portion $C = (r_0 t_i, t_i^\nu) \times L$, and we will prove the analogue of \eqref{4.2_tau_N_m_u} and \eqref{4.2_tau_N_m_l} with respect to the cone metric. Now as $t\ra 0$, the pullback metric $(\Psi^i_{\bar{A}})^*g^{\bar{A}}$ will converge uniformly in $t$ to the conical metric. In particular, the conical metric and the pullback metrics for small $t$ are all uniformly equivalent with proportionality factors independent of the global scale. Thus all quantities of the form $\md{\nabla^k s}$, computed with regards to any of these metrics, will be in the same asymptotic class. Denote by $\iota: C \ra \R^8$ the embedding of the cone. We then have that: 
\eas
\md{\nabla^k (\Psi^i_{\bar{A}})^*\tau|_{N^{\bar{A}}_m}} &= \md{\nabla^k (\Th^i_{\bar{A}})^*\chi_i^*\tau|_{N^{\bar{A}}_m}} \\
&\le  \md{\nabla^k(\Th^i_{\bar{A}} -\iota)^*\chi_i^*\tau}+\md{\nabla^k\iota^*\chi_i^*\tau}
\eas
Upper bounds for the second term can be given in an analogous way to what we have done for $N^{\bar{A}}_l$, as the cone is Cayley. As the cone is scaling invariant, and we are interested in the region with radius in $(r_0t_i, t_i^{\nu})$, we can run the same argument while only rescaling by $t_i^{\nu}$, and thus we get an error of $t_i^{-k\nu}$. We will see below that this is always the asymptotically better term. Now onto the remaining term. Notice that $\chi_i^*\tau$ is a fixed quantity, and the only dependence on $\bar{A}$ is within $\Th^i_{\bar{A}}-\iota$. So let us more generally bound: 
\eas
\md{\nabla^k f^*\omega},
\eas
for $f: C \ra \R^8$ a smooth function, and some form $\omega\in \Omega^k$. From the definition of pullback we see that there are smooth maps $E_k$, independent of $f$ such that:
\e
\label{4.2_rescaling_f}
\nabla^k f^*\omega(p) = E_k(f(p), \nabla f(p), \dots, \nabla^{k+1} f(p)).
\e
These maps have the additional property that they are affine in $\nabla^k f(p)$ where $k \ge 1$. Consider the scaling behaviour of both sides when the cone is rescaled by $\ga > 0$. In other words, we replace $f$ by $f_\ga$, such that $f_\ga(p) = f(\ga\cdot p)$. Equation \eqref{4.2_rescaling_f} still holds for $f_\ga$, and we can relate the norms of both sides to the corresponding terms for $f$ as follows: 
\eas
\ga \cdot \nabla^k f^*(\chi^*\tau)(\ga\cdot p) &= \nabla^k f^*(\ga\cdot  \chi^*\tau)(p) \\
&= \nabla^k f_\ga^*(\chi^*\tau)(p)  \\
&= E_k(f_\ga(p), \nabla f_\ga(p), \dots, \nabla^{k+1} f_\ga(p)) \\
&= E_k(\ga\cdot f(p), \ga\cdot \nabla f(p), \dots, \ga\cdot \nabla^{k+1} f(p))
\eas
As the maps $E_k$ are affine in the (higher) covariant derivatives of $f$, we see that: 
\e
\label{4.2_taylor_f}
\ga^k\md{\nabla^k f^*(\chi^*\tau) } \lesssim \md{f(p)} +  \ga\md{ \nabla f(p)} + \dots +  \ga^{k+1}\md{\nabla^{k+1} f(p)}. 
\e
Let us now estimate the norms of $f$ and its derivatives. We have:
\ea
f(p, s) = (\Th^i_{\bar{A}}-\iota) (p,s) = &(1-\phi)(2t_i^{-\nu}-1)(\Th^i_\AC(p, s)-\iota(p, s))\nonumber \\
\label{4.2_th_iota}
+& \phi(2t_i^{-\nu}-1)(\Th_\CS(p, s)-\iota(p,s)).
\ea
We would like bounds on $f$ that are invariant when changing $\bar{A}$. Dealing with changes that keep the scales fixed is not a problem, as elements of a given scale form a compact space. Thus we are only worried about changes of scale. We hence apply the $\AC$-condition to the $t_i^{-1}A_i$, and rescale to obtain: 
\ea
\md{\nabla^k\Th_\AC(p, s)-\iota(p,s)}& = O(t_i^{-\la+1} s^{\la-k}), 
\ea
where the constant is independent of the scale. We get from the $\CS$ condition that: 
\ea
\md{\nabla^k\Th_\CS(p, s)-\iota(p,s)}& = O(s^{\mu -k}), 
\ea
Taken together we obtain the bound: 
\eas
\md{(\Th_t-\iota) (p,s)} \lesssim t_i^{-\la+1} s^{\la} + s^{\mu} . 
\eas
One can obtain bounds for the derivatives of $\Th_t-\iota$ in a similar manner. To be more explicit, the covariant derivatives applied $k$ times to \eqref{4.2_th_iota} will hit both $\phi(2t_i^{-\nu}-1)$ and $\Th_{\AC /\CS}-\iota$. If it hits $\phi$ a total of $l$ times in a term, we obtain a bound of the form $O(t_i^{-l\nu}\partial^l\phi \md{\nabla^{k-l}(\Th_{\AC/\CS}-\iota)})$. An explicit calculation leads us to the general formula: 
\eas
\md{\nabla^k(\Th_{\bar{A}}-\iota) (p,s)} = O\left(\sum_{j+l = k}  (t_i^{-\la+1}s^{\la-j} + s^{\mu-j})t_i^{-l\nu}\partial^l\phi \right) .
\eas 
Thus we can plug this into our estimate \eqref{4.2_taylor_f} to obtain the bound: 
\eas
\md{\nabla^k(\Th_{\bar{A}}-\iota)^*\chi^*\tau} = O\left(\sum_{j+l = k}  (t_i^{-\la+1}\rho^{\la-j-1} + \rho^{\mu-j-1})t_i^{-l\nu}\partial^l\phi\right).
\eas
From this we obtain the claimed bounds by noting that either $r_0 t_i \le \rho \le \frac{1}{4}t_i^\nu$, where $\partial \phi = 0$, or $\rho > \frac{1}{4}t_i^\nu$, so that $ \rho = O(t_i^{\nu})$. 
\end{proof}

\begin{prop}[Initial Error estimate]
For a sufficiently small global scale $t>0$ and for $s \in \cS$ sufficiently close to our initial $\Spin(7)$-structure, $p > 4$, $\de > 1$, $\nu = \frac{\la-1}{\la-\mu}$, $k \in \N$, we have:
\ea
\label{4.2_tau_estimate}
\nm{F_{\bar{A}}(0,s)}_{L^p_{k, \de-1, \bar{A}}} &< C_F (t^{-\de\nu} (t^{\nu\mu} + t^{(\nu-1)\la+1}) + d(s, s_0)) \\
&< C_F (t^{\nu(\mu-\de)} + d(s, s_0))  \nonumber
\ea
Here $C_F > 0$ is a constant that only depends on the geometry of $(M, \Phi_s)$ and not on $\bar{A}$.
\end{prop}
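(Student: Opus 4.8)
The plan is to integrate the pointwise bounds from Proposition~\ref{4.2_pointwise_tau} against the weight $\rho^{-\de+1}$ appearing in the $L^p_{k,\de-1,\bar A}$-norm, piece by piece over the four regions $N^{\bar A}_u$, $N^{\bar A}_m$, $N^{\bar A}_l$, $N^{\bar A}_p$, and then optimise the exponent of $t$ using the specified value $\nu = \frac{\la-1}{\la-\mu}$. First I would dispose of $N^{\bar A}_u$ and $N^{\bar A}_p$: on $N^{\bar A}_p$ the manifold is still Cayley for $\Phi_{s_0}$, so $\tau_{s_0}$ and all its derivatives vanish there, and on $N^{\bar A}_u$ the bound $\md{\nabla^k\tau|_{N^{\bar A}_u}}\lesssim d(s,s_0)$ holds on a fixed compact set of bounded $\rho$-volume, contributing the $d(s,s_0)$ term. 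The main work is on $N^{\bar A}_l$ and $N^{\bar A}_m$, where $F_{\bar A}(0,s)$ is essentially $\star_{N^{\bar A}}\chi_i^*\tau$ with $s=s_0$ (the $s$-dependence being absorbed into the $d(s,s_0)$ term as before), so I would work region by region with the metric and volume estimates already established in the proof of Proposition~\ref{4.2_pointwise_tau}.

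For $N^{A_i}_l$ I would use $\md{\nabla^k\chi_i^*\tau|_{N^{A_i}}}\lesssim t_i^{-k+1}$ (and $\md{\chi_i^*\tau}\lesssim\md{x}\lesssim t_i$ for $k=0$, since on the tip $\rho\sim t_i\md{p}$ with $\md{p}$ bounded), translate into the scale-invariant model $t_i^{-1}A_i$ where $\rho$ ranges over a fixed region, and observe that the weighted norm picks up a factor $t_i^{-\de+1}$ from $\rho^{-\de+1}$ together with a $t_i^{n/p}$ Jacobian factor from the rescaling $\rho^{-n}\dvol$; in the end the $N^{A_i}_l$-contribution is controlled by a positive power of $t_i$ which is dominated by the $N^{A_i}_m$-contribution, so this region is not sharp. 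For $N^{A_i}_m$ I would split the radial integral at $\rho=\frac14 t_i^\nu$: on $(r_0t_i,\frac14 t_i^\nu)$ use \eqref{4.2_tau_N_m_u}, namely $\md{\nabla^k\tau}\lesssim t_i^{-\la}\rho^{\la-k-1}+\rho^{\mu-k-1}$, and on $(\frac14 t_i^\nu,t_i^\nu)$ use \eqref{4.2_tau_N_m_l}. Raising these to the $p$-th power, multiplying by $\rho^{(\de-1)p}$, and integrating $\rho^{-n}\,d\rho\,d\mathrm{vol}_{L_i}$ over the annulus $(r_0 t_i,t_i^\nu)$, the dominant term comes from the right-hand endpoint $\rho\sim t_i^\nu$: the $t_i^{-\la}\rho^{\la}$ piece yields $\sim t_i^{-\la}t_i^{\nu\la}=t_i^{(\nu-1)\la}$, the $\rho^\mu$ piece yields $\sim t_i^{\nu\mu}$, and the weight $\rho^{-\de}$ at $\rho\sim t_i^\nu$ contributes $t_i^{-\nu\de}$; collecting the $\rho^{-k}$-derivative factors against the $\rho^{i}$ in the norm cancels the $k$-dependence. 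This gives exactly $\nm{F_{\bar A}(0,s)}_{L^p_{k,\de-1,\bar A}}\lesssim t^{-\de\nu}(t^{\nu\mu}+t^{(\nu-1)\la+1})+d(s,s_0)$, the first displayed bound.

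For the second, cleaner bound I would substitute $\nu=\frac{\la-1}{\la-\mu}$: a direct computation gives $(\nu-1)\la+1 = \frac{(\la-1)\mu}{\la-\mu}+\bigl(1-\tfrac{\la-1}{\la-\mu}\bigr)\cdot 0$, more precisely $(\nu-1)\la+1-\nu\mu = \la+1-\nu(\la+\mu)\cdot(\cdots)$; carrying this algebra through shows $\nu\mu$ and $(\nu-1)\la+1$ differ by a nonnegative amount for $\la<1<\mu<2$, so the bracket is $\lesssim t^{\nu\mu}$ and the whole expression is $\lesssim t^{\nu(\mu-\de)}+d(s,s_0)$. The constant $C_F$ depends only on the bounds in Proposition~\ref{4.2_pointwise_tau} together with the (uniform in $\bar A$) Sobolev and volume constants for the family $N^{\bar A}$, hence only on the geometry of $(M,\Phi_s)$. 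The main obstacle I anticipate is bookkeeping the Jacobian and weight factors under the two different rescalings ($t_i$ on the tip, $t_i^\nu$ on the neck) consistently with the definition of the $L^p_{k,\bar\de,\bar A}$-norm, and verifying that the $N^{A_i}_l$ and the left endpoint $\rho\sim r_0t_i$ of $N^{A_i}_m$ really are subdominant — this is where the hypothesis $1<\de<\frac{\mu(\la-2)+1}{\la-\mu}$ is needed, since it is precisely the condition making $t^{(\nu-1)\la+1-\nu\de}$ a positive power of $t$.
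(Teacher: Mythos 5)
Your proposal follows the same route as the paper: expand the $L^p_{k,\de-1,\bar A}$-norm, split the integral over the four regions, feed in the pointwise bounds of Proposition~\ref{4.2_pointwise_tau}, split the neck at $\rho=\tfrac14 t_i^\nu$, and observe that the neck contribution at $\rho\sim t_i^\nu$ is dominant. That is exactly what the paper does, so the strategy is sound and the estimate follows.

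A few bookkeeping slips are worth flagging, none of them fatal but each one a sign you should recheck your algebra. First, there is no surviving Jacobian factor on the tip: the measure in the weighted norm is $\rho^{-4}\dvol$, which is scale-invariant by design, so after rescaling to $t_i^{-1}A_i$ there is no $t_i^{n/p}$; the tip contribution is just $t_i^{p(2-\de)}$ times a bounded integral. Second, the weight in the $L^p_{k,\de-1,\bar A}$-norm is $\rho^{-(\de-1)+j}$, not $\rho^{\de-1}$ as you wrote; the sign matters when you collect exponents at $\rho\sim t_i^\nu$. Third, you use the pointwise bound $t_i^{-\la}\rho^{\la-k-1}$ from \eqref{4.2_tau_N_m_u} to get $t_i^{(\nu-1)\la}$, but then announce $t_i^{(\nu-1)\la+1}$; the paper's own integration uses $t_i^{-\la+1}\rho^{\la-k-1}$ (coming from $\md{\nabla^k\Th_\AC-\iota}=O(t_i^{-\la+1}s^{\la-k})$ in the proof of \ref{4.2_pointwise_tau}), which accounts for the extra $+1$, and the $t_i^{-\la}$ in the displayed statement of \eqref{4.2_tau_N_m_u} appears to be a typo. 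Finally, and most substantively, your reason for the collapse to $t^{\nu(\mu-\de)}$ is wrong: with $\nu=\frac{\la-1}{\la-\mu}$ one checks directly that $(\nu-1)\la+1=\nu\mu$ \emph{exactly}; they do not "differ by a nonnegative amount." This equality is precisely the point of the choice of $\nu$ (it balances the neck's two error terms), and presenting it as an inequality suggests you haven't verified the algebra. The claim that the upper bound on $\de$ is "precisely" what makes $t^{(\nu-1)\la+1-\nu\de}$ a positive power is also off: that positivity needs only $\de<\mu$, whereas $\frac{\mu(\la-2)+1}{\la-\mu}=\nu\mu+1-\nu<\mu$ is strictly stronger and is imposed for other parts of the analysis.
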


\begin{proof}
Let $0 \le j \le k$. Subdivide $N^{A_i}_{m} = N^{A_i}_{m, 1} \cup N^{A_i}_{m, 2}$, where $N^{A_i}_{m, 1} $ is the region where $\rho \le \frac{1}{4}t_i^\nu$ and $N^{A_i}_{m, 2}$ the rest.  We then have that: 
\eas
&\int_{N^{\bar{A}}} \md{\rho^{-\de+1+j}\nabla^j F_{\bar{A}}}^p \rho^{-4} \dvol = \int_{N^{\bar{A}}} \md{\rho^{-\de+1+j}\nabla^j \tau|_{N^{\bar{A}}}}^p \rho^{-4} \dvol \\
&\lesssim \sum_{i=0}^l\int_{N^{A_i}_l} (\rho^{-\de+1 + j} \md{\chi_i^*\tau|_{N^{A_i}}})^p\rho^{-4} \dvol  
+\sum_{i=0}^l\int_{N^{A_i}_m} (\rho^{-\de+1 +j} \md{\nabla^k\tau|_{N^{A_i}_m}})^p \rho^{-4} \dvol\\
&+ \vol(N^{\bar{A}}_u)d^p(s, s_0) \\
&\lesssim \sum_{i=0}^l\int_{N^{A_i}_l} (\rho^{-\de+1  +j} t_{i}^{-j+1})^p \rho^{-4}\dvol +\sum_{i=0}^l\int_{N^{A_i}_{m, 1}} (\rho^{-\de +1+j} (\rho^{\mu-j-1}+t_i^{-\la+1}\rho^{\la-j-1}))^p \rho^{-4}\dvol \\
&+ \sum_{i=0}^l\int_{N^{A_i}_{m, 2}} (t_i^{-\de\nu +j\nu+\nu } t_i^{-j\nu}(t_i^{\nu(\mu-1)}+t_i^{(\nu-1)(\la-1)}))^p \rho^{-4}\dvol +d^p(s, s_0) \\
&\lesssim \sum_{i=0}^l t_{i}^{p(2-\de)}\int_{N^{A_i}_l} \rho^{-4} \dvol
+\sum_{i=0}^l \int_{N^{A_i}_{m, 1}} \rho^{-p\de} (\rho^{\mu}+\rho^{(1-\frac{1}{\nu})\la+\frac{1}{\nu} })^p  \rho^{-4} \dvol \\
&+\sum_{i=0}^l t_i^{-p\nu\de}(t_i^{\nu\mu}+t_i^{(\nu-1)\la +1})^p  \int_{N^{A_i}_{m, 2}}  \rho^{-4} \dvol+d^p(s, s_0) 
\eas
\eas
&\lesssim \sum_{i=0}^l (t_i^{p(2-\de)} + t_i^{-p\nu\de}(t_i^{\nu\mu}+t_i^{(\nu-1)\la+1})^p) 
+\sum_{i=0}^l \int_{N^{A_i}_{m, 1}} \rho^{p(\mu-\de)} \rho^{-4} \dvol+d^p(s, s_0) \\
&\lesssim \sum_{i=0}^l( t_i^{-p\nu\de}(t_i^{\nu\mu}+t_i^{(\nu-1)\la+1})^p + t_i^{-p\nu\de}t_i^{p\nu\mu}) +d^p(s, s_0)\\
 &\lesssim \sum_{i=0}^l t_i^{-p\nu\de}(t_i^{\nu\mu}+t_i^{(\nu-1)\la+1})^p +d^p(s, s_0).
\eas
Here we used all the various bounds from Proposition \ref{4.2_pointwise_tau} as well as the fact that we chose. Moreover we used that and that $\rho$ can be uniformly bound from above by $2t_i^\nu$ on both $N^{A_i}_l$ and $N^{A_i}_m$. Furthermore we can also bound $\rho$ from below by $\frac{1}{4}t_i^\nu$ on $N^{A_i}_{m, 2}$, and by $r_0t_i$ on $N^{A_i}_{m, 1}$ and $N^{A_i}_{l}$. The integral:
\eas
\int_{N^{A_i}_{l}}  \rho^{-4}\dvol \lesssim \int_{\eps}^{r_0} s^{-4}s^3 \d s  \le C
\eas
is bounded independently of $t_i$, as is:
\eas
\int_{N^{A_i}_{m,2}}  \rho^{-4}\dvol \lesssim \int_{\frac{1}{4}t_i^{\nu}}^{2t_i^{\nu}} s^{-1} \d s =\log( 2t_i^{\nu}) -\log\left( \frac{1}{4}t_i^{\nu}\right) \le C.
\eas
Finally, we compute that
\eas
\int_{N^{A_i}_{m, 1}} \rho^{p(\mu-\de)} \rho^{-4}\dvol \lesssim \int_{r_0t_i}^{\frac{1}{4}t_i^{\nu}} s^{p(\mu-\de)-1} \d s  \lesssim t_i^{p\nu(\mu-\de)} - t_i^{p(\mu-\de)}\lesssim t_i^{p\nu(\mu-\de)} .
\eas
The bound now follows as the exponent of the $t_i$ is positive, and thus the biggest one dominates, which is the global scale $t$. For the second line in \ref{4.2_tau_estimate} we use our choice of $\nu =  \frac{\la-1}{\la-\mu}$, which is chosen exactly so that $\nu\mu = (\nu-1)\la+1$.
\end{proof}

\subsubsection*{\texorpdfstring{Estimates for $D_{\bar{A}}$}{Estimates for D_{\bar{A}}}}

Recall that in our construction of $N^{\bar{A}}$, we have assumed identical cones (as subsets of $\R^8$) for the pieces $A_i$ and $N$, given the choice of a $\Spin(7)$-coordinate system. Here the interpolation happened between the radii $\ha t^\nu$ and $t^\nu$, where $0 < \nu < 1$ is a constant. In order to derive estimates similar to the bounds in Propositions \ref{2_3_cs_bound_k} and \ref{2_3_ac_bound_k} for $D_{\bar{A}}$, we use a partition of unity to combine the results for the parts. For this we need further constants $0 < \nu'' < \nu' < \nu < 1$. Let $\tilde{\phi}: \R \ra [0,1]$ be a smooth cut-off function, such that:
\e
\tilde{\phi}|_{(-\infty, \nu'']} = 0, \quad \tilde{\phi}|_{[\nu', +\infty) } = 1.\nonumber
\e
Using $\tilde{\phi}$ we define a partition of unity on $N^{\bar{A}}$ as follows. Let $t > 0$ be the global scale of $N^{\bar{A}}$ and suppose that for $t_i$ is the local scale of $A_i$. We then define:

\e
\label{4.2_alpha_t}
\alpha(p) = \left\lbrace \begin{array}{cc}
\tilde{\phi}\left(\frac{\log(\rho(p))}{\log(t_i))}\right),&\text{ if } p \in \Psi^{\bar{A}}_i(L_i \times (r_0t_i, R_0)) \\ 
0,& \text{ if } p \in N^{\bar{A}}_u, \\
1,& \text{ if } p \in N^{\bar{A}}_l, \\
\end{array} \right.
\e
Then $\al(p) = 0$  on $\Psi^{\bar{A}}_i(L_i \times (r_0t_i, R_0))$ if $\rho(p) \ge t_i^{\nu''}$ and $\al(p) = 1$ if $\rho(p) \le t_i^{\nu'}$. Thus $\al|_{N\Psi^{\bar{A}}_i(L_i \times (r_0t_i, R_0))}$ is supported in $N^{A_i}_{\AC} = \Psi^i_{\bar{A}}(L_i \times (r_0t_i, t_i^{\nu''}))$ and $1-\al$ is supported in $N^{\bar{A}}_{\CS} =  \Psi^{\bar{A}}_i(L_i \times (t_i^{\nu'}, R_0))$. In particular the gluing region $N^{\bar{A}}_m$ is entirely contained in $N^{\bar{A}}_{\AC}$. We would now like to relate the operator $D_{\bar{A}}|_{N^{A_i}_{\AC}}$ to  $D_\AC$ on a perturbation of $A_i$ and the  operator $D_{\bar{A}}|_{N^{\bar{A}}_{\CS}}$ to $D_\CS$ on $N$. 
\begin{figure}
\begin{center}
\includegraphics[scale=0.7]{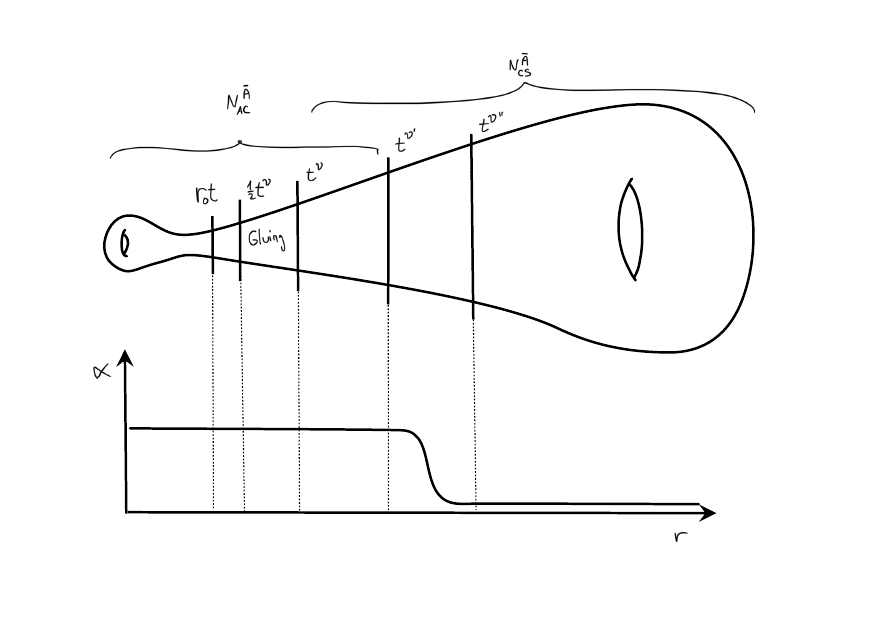}
\end{center}
\caption{Decomposition of the glued cone}
\label{3_4_alpha_behaviour}
\end{figure}
To do this we define a pseudo-kernel $\kappa_{\bar{A}} \subset C^\infty(\nu_\ep(N^{\bar{A}}))$ for the glued operator, the analogue of $\kappa_{\CS}$ and $\kappa_{\AC}$ from Propositions \ref{2_3_cs_pseudo} and \ref{2_3_ac_pseudo} respectively. We will be working with a rate $1 < \de < \frac{\mu (\la-2)+1}{\la-\mu}$ such that $(1, \de] \cap \cD_{L_i} = \emptyset$ for all the links of $N$. The space   $\kappa_{\bar{A}}$ will be defined as a direct sum of contributions from both pieces. First, the treatment of the conically singular piece is immediate. The elements of $\ka_{\CS}$ all have support in a compact subset of $N$. Thus for $t$ sufficiently small, we can consider them as section of $N^{\bar{A}}_{\CS}$ directly, since the $N^{\bar{A}}_{\CS}$ exhaust $N$ as the global scale goes to $0$. In particular, we can then also consider them as sections over $N^{\bar{A}}$ by extending by $0$ over $N^{\bar{A}}\setminus N^{\bar{A}}_{\CS}$. Even more, for small enough $t$, the elements of $\kappa_{\CS}$, seen as sections on $N^{\bar{A}}$, vanish on $N^{\bar{A}}_{\AC}$. Similarly the operators $D_{\CS}$ and $D_{\bar{A}}|_{N^{\bar{A}}_{\CS}}$ can be identified and the bounds for $D_{\CS}$ carry over.

Next, the interpretation of $\ka_\AC$ is more delicate, as the gluing region is a perturbation of $A_i$. We first find an identification between $N^{A_i}_{\AC}$ and an open subset of $A_i$ as follows. For technical purposes we fix a further rate $0 < \tilde{\nu} < \nu''$. Then there is a diffeomorphism  between an open subset $\tilde{A}_i \subset A_i$ and $\chi(K_{A_i}) \sqcup \Th^{i}_{\bar{A}}(L_i\times (r_0t_i,t_i^{\tilde{\nu}})) $, given by sending: 
\eas
p \in K_A &\longmapsto \chi(p), \\
\Psi_\AC\left(p,s\right) &\longmapsto\Th^{i}_{\bar{A}}(p,s) .
\eas
Let us call this map $\Psi^{i}_{\bar{A}} : \tilde{A}_i \ra N^{\bar{A}}_{\AC}$, which as usual factors as $\Psi^{i}_{\bar{A}} = \chi \circ \Th^{i}_{\bar{A}}$. As the operator $D_{\AC}$ not only takes into account the metric structure of $A$, but also the ambient $\Spin(7)$-structure we now thicken the map $\Th^{i}_{\bar{A}}$. Let $U^{i}_{\bar{A}}$ be a tubular neighbourhood of $\tilde{A}_i$ in $\R^8$, so that every $q \in U^{i}_{\bar{A}}$ can be written uniquely as $q = p+v$, where $p \in \tilde{A}_i$ and $v \in (\nu_{\eps, \Phi_0}(\tilde{A}_i))_p$. We now define:
\eas
\tilde{\Th}^{i}_{\bar{A}} : U^{i}_{\bar{A}} &\longra \R^8 \\
(p,v)  &\longmapsto \Th^{i}_{\bar{A}}(p)+v 
\eas
Then clearly $\tilde{\Th}^{i}_{\bar{A}}|_{\tilde{A}_i} = \Th^{i}_{\bar{A}}$. We will now transport the Cayley form in a vicinity of $N^{\bar{A}}_{\AC}$ over to $A$. Consider first $\chi^* \Phi$, which is a four-form on $B_{r_0}(0) \subset \R^8$. Via pullback, we obtain a form, which we define pointwise as:
\eas
(\tilde{\Phi}^i_{\bar{A}})_p = t_i^{-4}\tilde{\Th}^{i}_{\bar{A}} \chi^* \Phi(t_ip)\in \Omega^4(U^t_A).
\eas
We introduced the factor $t_i^{-4}$ to counteract the rescaling by $t_i$. With this normalisation we have $\tilde{\Phi}^i_{\bar{A}} \ra \Phi_0$ uniformly on $U^t_A$ as $t \ra 0$. This follows from Taylor's theorem and the fact that $U^t_A \subset B_{2r_0t^{\tilde{\nu}-1}}(0)$, since it gives $\chi^*\Phi(t_ip)-\Phi_0 = O(t_i\md{p})$ as we have $\chi^*\Phi(0) = \Phi_0$. We now extend $\tilde{\Phi}^i_{\bar{A}}$ to a form $\Phi^i_{\bar{A}}$ defined on all of $\R^8$. For this recall the smooth cut off function $\phi: \R \ra [0,1]$ which we used in the construction of $N^{\bar{A}}$. It vanishes for negative values and is equal to $1$ for values $\ge 1$, as in \eqref{4.1_cutoff}. The space of Cayley forms on $\R^8$ is a smooth submanifold $\cC \subset \La^4 \R^8$ of dimension $43$. Choose local coordinates $c: B_1(0) \subset \R^{43} \ra \cC$ such that $c(0) = \Phi_0$. As we have uniform convergence $\tilde{\Phi}_t \ra \Phi_0$ on $U^t_A$, we will eventually have $\tilde{\Phi}_t \in \im c$ for $t$ sufficiently small. We then interpolate between $\tilde{\Phi}_t$ and $\Phi_0$ between the radii $\ha t^{\tilde{\nu}-1}$ and $t^{\tilde{\nu}-1}$ as follows: 
\e
\label{4_2_structurOnAC}
\Phi^i_{\bar{A}}(\Th_{\AC,t_i^{-1}A_i}(r, p)+v) = c(c^{-1}(\tilde{\Phi}^i_{\bar{A}}(\Th^i_\AC(r, p)+v)) \phi(2rt_i^{-1+\tilde{\nu}}-1)).
\e

We now have a family of forms $\Phi^i_{\bar{A}}$ on $U_A$. If we choose the global scale sufficiently small, we can extend these forms to all of $\R^8$. For sufficiently small $t$, we have that $A$ is almost Cayley. These forms $\Phi^i_{\bar{A}}$ form a continuous family with respect to the parameter $\bar{A}$, and as $t\longra 0$, we get uniform convergence $\Phi^i_{\bar{A}} \longra \Phi_0$. In fact, we even have $C^\infty_\eta$-convergence.

\begin{lem}
The family $(\R^8, \Phi^i_{\bar{A}})_{\bar{A} \in\overline{\cM}^\la_{\AC}(A_i)}$ is a continuous family of $C^\infty_\eta$ perturbations of the standard $\Spin(7)$ form $\Phi_0$. The rate $\eta < 1$ only depends on  the constant $0 < \tilde{\nu} <1$ chosen for the gluing, and $\eta \ra -\infty$ as $\tilde{\nu} \ra 1$ . For $\eta < \la$ , we have that $A$ is an $\AC_\la$-submanifold for the $\Spin(7)$-structure $\Phi^i_{\bar{A}}$.
\end{lem}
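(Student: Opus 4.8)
The plan is to bound $\Phi^i_{\bar A}-\Phi_0$ region by region, reducing every estimate to a Taylor expansion at the origin of the fixed smooth form $\chi^*\Phi$, exactly as in the proof of Proposition~\ref{4.2_pointwise_tau}. Write $\omega=\chi^*\Phi-\Phi_0$, a smooth form on $B_{r_0}(0)$ with $\omega(0)=0$. By \eqref{4_2_structurOnAC} the cut-off forces $\Phi^i_{\bar A}=c(0)=\Phi_0$ outside a ball of radius comparable to $t_i^{\tilde\nu-1}$, so $\Phi^i_{\bar A}-\Phi_0$ is supported in $\{\,r\lesssim t_i^{\tilde\nu-1}\,\}$ and there is nothing to check on the complement. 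On the region where the cut-off equals $1$ one has $\Phi^i_{\bar A}=\tilde\Phi^i_{\bar A}$, and the normalisation in the definition of $\tilde\Phi^i_{\bar A}$ says precisely that $\tilde\Phi^i_{\bar A}-\Phi_0$ is, up to the rescaling by $t_i$ built into that definition, the pullback of $\omega$ by a map that is, uniformly in $\bar A$, a small $C^\infty$ perturbation of the embedding of $A_i$. Hence Taylor's theorem gives the pointwise bound $\md{\tilde\Phi^i_{\bar A}-\Phi_0}(p)\lesssim t_i\md{p}$, with constant independent of $\bar A$.

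For the higher covariant derivatives the situation is that of Proposition~\ref{4.2_pointwise_tau}: differentiating naively in conical coordinates introduces a spurious factor $t_i^{-k}$, but this is a coordinate artefact which disappears after passing to the cylindrical coordinates conformal to the cone, where the pullback operators $E_k$ are affine in the top-order derivative and every ingredient has cylindrical derivatives bounded up to the overall rescaling. This yields $\md{\nabla^k(\tilde\Phi^i_{\bar A}-\Phi_0)}\lesssim t_i\,r^{1-k}$ on the gluing region, again uniformly in $\bar A$; on the transition annulus the derivatives of the cut-off $\phi$ and the smoothness of the chart $c$ of the $43$-dimensional submanifold $\cC\subset\La^4\R^8$ of $\Spin(7)$-forms contribute only terms of the same or lower order, and the output lies in $\im c\subset\cC$ by construction, so $\Phi^i_{\bar A}$ is a genuine $\Spin(7)$-form at every point. (That $\tilde\Phi^i_{\bar A}\in\im c$ on the transition annulus follows from the uniform smallness just established, once the global scale is small enough for $c^{-1}$ to be defined there.) Since $t_i\le t$ and $r\le Ct_i^{\tilde\nu-1}$ on the support, the bound $t_i\,r^{1-k}\lesssim r^{\eta-k}$ holds precisely when $(1-\tilde\nu)(1-\eta)\le 1$, so one may take any $\eta$ with $1-(1-\tilde\nu)^{-1}\le\eta<1$; this exponent depends only on $\tilde\nu$ and tends to $-\infty$ as $\tilde\nu\to1$. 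This establishes that $\Phi^i_{\bar A}-\Phi_0\in C^\infty_\eta$, and the same computation shows that the induced metric $g_{\Phi^i_{\bar A}}$ is $\AC_\eta$ to the flat metric.

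Continuity in $\bar A$ holds because $\tilde\Phi^i_{\bar A}$ depends on $\bar A\in\overline{\cM}^\la_{\AC}(A_i)$ only through the interpolation map $\Th^i_{\bar A}$, which varies continuously with the gluing data by the construction of $N^{\bar A}$ together with the continuity of the $\AC$-representatives in their moduli space, while $c$, $c^{-1}$ and $\phi$ are continuous operations. The cone point $\bar A=C_i$ (that is, $t_i=0$) is treated separately: the construction degenerates to $\Phi^i_{C_i}=\Phi_0$, and continuity as $t_i\to0$ is immediate from $\md{\nabla^k(\Phi^i_{\bar A}-\Phi_0)}\lesssim t_i\,r^{1-k}$. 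Finally, for each fixed member $\Phi^i_{\bar A}$ coincides with $\Phi_0$ outside a bounded set, so $A$ still satisfies \eqref{2_3_extrinsic_ac} with the same rate $\la$; since $\eta<0$, replacing the flat connection by $\nabla^{g_{\Phi^i_{\bar A}}}$ does not disturb this, and the hypothesis $\eta<\la$ then places $(\R^8,\Phi^i_{\bar A})$ in the setting of the $\AC_\la$ deformation theory of Theorem~\ref{2_3_ac_structure} (and of Propositions~\ref{2_3_ac_pseudo} and~\ref{2_3_ac_bound_k}).

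The point I expect to be delicate is the uniform-in-$\bar A$ control of the higher covariant derivatives in the $C^\infty_\eta$ estimate: as in Proposition~\ref{4.2_pointwise_tau} one must move to cylindrical coordinates to remove the spurious powers of $t_i^{-1}$ coming from the rescaling, and then balance the rescaling factor $t_i$, the radius $r$ and the cut-off scale $t_i^{\tilde\nu-1}$ to read off the sharp exponent $\eta$. The remaining points --- that the output lands in $\cC$, continuity in $\bar A$ including at the cone, and preservation of the $\AC_\la$ property --- are then routine.
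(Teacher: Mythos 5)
Your argument follows the paper's proof closely: bound the support of $\Phi^i_{\bar A}-\Phi_0$ by $r\lesssim t_i^{\tilde\nu-1}$, Taylor-expand $\chi^*\Phi$ at the origin to get the pointwise estimate, balance the factors $t_i$, $r$, and $t_i^{\tilde\nu-1}$ to obtain the admissible range of $\eta$, and deduce the $\AC_\la$ property from compact support together with $\eta<\la$. The one thing to flag is a uniform off-by-one in the weighting: you take ``$C^\infty_\eta$'' to mean $\md{\nabla^k(\Phi-\Phi_0)}=O(r^{\eta-k})$ (which matches the paper's stated $C^k_\de$-norm convention) and arrive at $\eta\ge\tilde\nu/(\tilde\nu-1)$, whereas the paper's proof weights by $r^{-\eta+1+k}$ --- i.e.\ $\md{\nabla^k(\Phi-\Phi_0)}=O(r^{\eta-1-k})$, the natural convention for an $\AC_\eta$ $\Spin(7)$-structure as in \eqref{2_1_ac_conv} --- and gets the threshold $\eta>(2\tilde\nu-1)/(\tilde\nu-1)$, exactly your value plus one. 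The two are mathematically identical after relabeling, and both tend to $-\infty$ as $\tilde\nu\to1$; but to use ``$\eta<\la$ implies $A$ is $\AC_\la$ for $\Phi^i_{\bar A}$'' as stated, you need the paper's (shifted) convention, since $\AC_\la$ is defined relative to the metric decaying like $r^{\la-1}$. Your parenthetical remark that $\eta<0$ is not asserted in the lemma and is not automatic (the threshold is $\ge0$ for $\tilde\nu\le1/2$), but it plays no role in the argument. The remaining details --- that the higher-derivative estimate is handled by the rescaling argument of Proposition~\ref{4.2_pointwise_tau}, that the output lands in $\cC$ once $t$ is small, continuity in $\bar A$ including at the cone --- are consistent with, and somewhat more explicit than, what the paper writes.
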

\begin{proof}
Note that the family $\Phi^i_{\bar{A}}$ is flat at large radii, but the cut off radius $Ct_i^{\tilde{\nu}-1}$ depends on $\bar{A}$. Thus the deformations at non-zero global scale are compactly supported near a fixed $\bar{A}_0$, and in particular also in $C^\infty_\eta$ for any $\eta \le 1$. In particular, for any $\eta < \la$ the submanifold $A\subset \R^8$ will be $\AC_\la$ for $\Phi^i_{\bar{A}}$ because it is for $\Phi_0$. From Equation \eqref{4_2_structurOnAC} and $\md{\Phi_p - \Phi_{z_i}} = O(\rho) $ on $M $ we see that:
\eas
\md{(\Phi^i_{\bar{A}} - \Phi_0) r^{-\eta+1}} \le C t_i^{\tilde{\nu}}(t_i^{\tilde{\nu}-1})^{-\eta+1} = C t_i^{\eta(1-\tilde{\nu}) +2\tilde{\nu}-1}.
\eas
Thus we have $C^0_\eta$ convergence as $t \ra 0$ when $\eta > \frac{2\tilde{\nu}-1}{\tilde{\nu}-1}$. Similar reasoning for higher derivatives shows that:
\eas
\md{\nabla^k \Phi^i_{\bar{A}} r^{-\eta+k+1}} \le C t_i^{(-\eta+1)(\tilde{\nu}-1) +k}.
\eas
Thus $C^\infty_\eta$ convergence follows immediately whenever we have $C^0_\eta$ convergence.
\end{proof}

Let us return to the question of defining the analogue of $\ka_\AC$ for $N^{A_i}_{\AC}$. After composing $\tilde{\Th}_i^{\bar{A}}$ with $\chi$ we have an identification $\Psi_A^t$ of open neighbourhoods of $A$ and $N^{\bar{A}}_{\AC}$. We can hence pull back the elements of $\ka_\AC$ to sections of $TM|_{N^{\bar{A}}_{\AC}}$ for $t$ sufficiently small. Note that we cannot in general require that they are normal sections. To remedy this we will first project $\ka_\AC$ onto $\nu_{\Phi^i_{\bar{A}}}(A)$, i.e. the normal bundle of $A$ with respect to the Cayley form $\Phi^i_{\bar{A}}$. Note that $\nu_{\Phi^i_{\bar{A}}}(A)$ is exactly identified with $\nu(N^{\bar{A}}_{\AC})$ under $\Psi_A^t$. Thus we define the space of sections $\ka_{\AC,\bar{A}}$ as $\ka_\AC$ projected onto $\nu_{\Phi^i_{\bar{A}}}(A)$ and then transported to $\nu(N^{\bar{A}}_{\AC})$. For $t$ sufficiently small the elements of $\ka_{\AC,\bar{A}}$ can be extended to sections on all of $N^{\bar{A}}$, and after further reducing $t$ the sections in $\ka_{\CS}$ and $\ka_{\AC,\bar{A}}$ will have disjoint support. In this case we define:
\e
\ka_{\bar{A}} = \ka_{\CS} \op \ka_{\AC,\bar{A}}.
\e
This is a family of pseudo-kernels for the family of operators $D_{\bar{A}}$. Note that $\ka_{\CS} $ contains all the contributions which have rate $ \ge \de$, and $\ka_{\AC,\bar{A}}$ all the ones which nave rate $\le \de$. As $\de$ is by assumption not critical, this accounts for every possible deformation exactly once. Note also that while the non-linear deformation operator of an $\AC$ Cayley does not have geometrical meaning when the rate $\la > 1$, the linearised operator can be defined for any rate.

We now show the analogue of Propositions \ref{2_3_ac_bound_k} and \ref{2_3_cs_bound_k} for the glued manifold $N^{\bar{A}}$. For this we need to introduce an inner product that interpolates between $L^2_{\de-\eps}$ on the $\AC$ region and $L^2_{\de+\eps}$ on the $\CS$ region. So we define for $u,v \in C^\infty(\nu(N^{\bar{A}})$:
\e
\label{3_2_inner_product}
\left\langle u, v\right\rangle_{\de\pm\eps} = \int_{N^{\bar{A}}} \left\langle u, v\right\rangle \rho^{w-4} \dvol.
\e
Here $w(p) = \de -\eps$ whenever $\rho(p) \le \ha t_i^{\nu}$ and $w(p) = \de +\eps$ whenever $\rho(p) \ge t_i^{\nu}$. It is clear that the following is true:
\begin{prop}
\label{4.2_D_A_inv_estimate}
For $t$ sufficiently small there is a constant $C_{\AC}$, independent of $\bar{A}$ such that for $v \in L^p_{k+1, \de, \bar{A}}(\nu(N^{\bar{A}}))$ with $\supp(v) \subset N^{\bar{A}}_{\AC}$ which is $L^2_{\de\pm\eps}$-orthogonal to $\ka_{\AC,\bar{A}}$ we have: 
\e
\label{4_2_D_A_inv_est_formula}
\nm{v}_{L^p_{k+1, \de, \bar{A}}} \le C_{\AC} \nm{D_{\bar{A}} v}_{L^p_{k, \de-1, \bar{A}}}.
\e
\end{prop}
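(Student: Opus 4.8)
The plan is to obtain this from the asymptotically conical bound of Proposition~\ref{2_3_ac_bound_k}, applied not to $(A_i,\Phi_0)$ but to the almost Cayley $(A_i,\Phi^i_{\bar{A}})$, and then transported to $N^{\bar{A}}_{\AC}$ along the identification $\Psi^i_{\bar{A}}$ constructed above. Since $\supp v\subset N^{\bar{A}}_{\AC}=\bigsqcup_{i=1}^{k}N^{A_i}_{\AC}$ with the pieces pairwise disjoint, I would first reduce to the case where $v$ is supported in a single $N^{A_i}_{\AC}$; the general estimate then follows by summing the $p$-th powers of the one-component estimates.

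Next I would record the effect of the rescaling $x\mapsto t_i x$ of $\R^8$ implicit in $\Psi^i_{\bar{A}}=\chi_i\circ\Th^i_{\bar{A}}$. On $N^{A_i}_{\AC}$ the radius function $\rho$ agrees, up to factors bounded above and below independently of $\bar{A}$, with $t_i$ times the radius function of the open set $\tilde{A}_i\subset A_i$, and the induced metric is uniformly equivalent to the $t_i$-rescaling of that of $\tilde{A}_i$. Since the operator norm of the Cayley deformation operator as a map $L^p_{k+1,\de}\to L^p_{k,\de-1}$ is invariant under rescaling of the cone --- $D$ being homogeneous of the appropriate degree, so that the powers of $t_i$ produced on the two sides of \eqref{4_2_D_A_inv_est_formula} match --- an estimate $\nm{\cdot}_{L^p_{k+1,\de}}\le C\,\nm{D_{\AC,\Phi^i_{\bar{A}}}\cdot}_{L^p_{k,\de-1}}$ on $A_i$ transplants under $\Psi^i_{\bar{A}}$ to the same estimate on $N^{A_i}_{\AC}$, the constant changing only by the uniformly bounded metric-comparison factor. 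Here $D_{\AC,\Phi^i_{\bar{A}}}$ denotes the Cayley deformation operator of $A_i$ for the rescaled form $\Phi^i_{\bar{A}}$ of \eqref{4_2_structurOnAC}, which under $\Psi^i_{\bar{A}}$ is by construction identified with $D_{\bar{A}}|_{N^{A_i}_{\AC}}$, and which is well defined even though the rate $\la$ may exceed $1$ since only the linearisation is needed.

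I would then apply the asymptotically conical deformation theory to $(A_i,\Phi^i_{\bar{A}})$. By the Lemma above, the forms $\Phi^i_{\bar{A}}$ constitute a continuous family of $\AC_\eta$ perturbations of $\Phi_0$ (with $\eta<\la$, so that $A_i$ remains $\AC_\la$) which converge to $\Phi_0$ as $t\to 0$ and which, near any fixed base point, are compactly supported; in particular they are controlled uniformly over the admissible parameters once the global scale bound $t$ is fixed. Propositions~\ref{2_3_ac_pseudo} and~\ref{2_3_ac_bound_k} then supply a single constant $C_{\AC}$, valid for all $\bar{A}$ of global scale at most $t$, such that any $\Phi^i_{\bar{A}}$-normal section $w\in L^p_{k+1,\de}$ that is $L^2_{\de-\eps}$-orthogonal to $\ka_\AC$ projected onto $\nu_{\Phi^i_{\bar{A}}}(A_i)$ --- that is, to $\ka_{\AC,\bar{A}}$ --- satisfies $\nm{w}_{L^p_{k+1,\de}}\le C_{\AC}\,\nm{D_{\AC,\Phi^i_{\bar{A}}}w}_{L^p_{k,\de-1}}$. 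To conclude I would check that the hypotheses carry over to $w=(\Psi^i_{\bar{A}})^{*}v$: it is automatically $\Phi^i_{\bar{A}}$-normal because $\Psi^i_{\bar{A}}$ identifies $\nu(N^{\bar{A}}_{\AC})$ with $\nu_{\Phi^i_{\bar{A}}}(A_i)$; and the $L^2_{\de\pm\eps}$-orthogonality of $v$ to $\ka_{\AC,\bar{A}}$ from the hypothesis is exactly $L^2_{\de-\eps}$-orthogonality, since the elements of $\ka_{\AC,\bar{A}}$ are supported in a region where $\rho=O(t_i)$, hence --- for $t$ small --- in the region $\rho<\tfrac{1}{2}t_i^{\nu}$ where the interpolating weight of \eqref{3_2_inner_product} equals $\de-\eps$. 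Combining this with the scaling identities from the previous step gives \eqref{4_2_D_A_inv_est_formula}.

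The step I expect to be the main obstacle is establishing the uniformity of $C_{\AC}$ in $\bar{A}$: one has to know that the perturbations $\Phi^i_{\bar{A}}$ converge to $\Phi_0$ uniformly over the space of admissible scales (which is compact once $t$ is bounded), so that the stability-under-perturbation clause of Proposition~\ref{2_3_ac_bound_k} yields one constant for the whole family rather than a constant depending on each $\bar{A}$. A secondary point, circumvented by working with the $\Phi^i_{\bar{A}}$-normal bundle throughout, is that no approximate-orthogonality correction is needed: the pull-back $(\Psi^i_{\bar{A}})^{*}v$ is genuinely $\Phi^i_{\bar{A}}$-normal and genuinely $L^2_{\de-\eps}$-orthogonal to $\ka_{\AC,\bar{A}}$, so Proposition~\ref{2_3_ac_bound_k} applies directly.
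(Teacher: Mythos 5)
Your proposal is correct and is precisely the argument the paper has in mind: the paper states this proposition as ``It is clear that the following is true'' immediately after introducing the perturbed structures $\Phi^i_{\bar{A}}$, the Lemma giving their uniform $C^\infty_\eta$-convergence to $\Phi_0$, and the pseudo-kernel $\ka_{\AC,\bar{A}}$, and the intended justification is exactly the reduction you carry out (rescaling invariance of the weighted estimate, identification of $D_{\bar{A}}|_{N^{A_i}_{\AC}}$ with $D_{\AC,\Phi^i_{\bar{A}}}$ via $\Psi^i_{\bar{A}}$, and the perturbation-stability clause of Propositions~\ref{2_3_ac_pseudo} and~\ref{2_3_ac_bound_k}). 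You have also correctly handled the one point that could trip this up, namely that the $L^2_{\de\pm\eps}$-orthogonality hypothesis coincides with $L^2_{\de-\eps}$-orthogonality on the support of $\ka_{\AC,\bar{A}}$ because that support lies where the interpolating weight equals $\de-\eps$.
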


We now turn back to our task of combining the bounds on $D_A$ and $D_N$ to get bounds on the inverse of  $D_{\bar{A}}$ modulo the pseudo-kernel. Recall the cut off function $\alpha : N^{\bar{A}} \ra [0,1]$ we defined in \eqref{4.2_alpha_t}. It has the following decay properties: 

\begin{lem}
\label{4.2_asymptotics_rho}
Let $l \ge 1$ be given. Then:
\e
\nm{\nabla^l \al}_{C^0} \in O\left(\rho^{-l} \log(t_i)^{-1}\right).
\e
\end{lem}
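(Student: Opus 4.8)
The plan is to view $\alpha$ on the collar $\Psi^{\bar A}_i(L_i\times(r_0 t_i,R_0))$ as the composition $\alpha=\tilde\phi\circ g$, where $g:=(\log t_i)^{-1}\log\rho$, and to reduce the estimate to the scaling behaviour of the radius function combined with the chain rule. First I would locate the support of $\nabla\alpha$: since $\tilde\phi$ is locally constant off $[\nu'',\nu']$ we have $\nabla^l\alpha=0$ unless $\nu''\le g\le\nu'$, i.e.\ unless $\rho$ lies between constant multiples of $t_i^{\nu'}$ and $t_i^{\nu''}$. As $\nu'',\nu'<\nu$, for the global scale $t$ small enough this region lies at radii strictly greater than $\frac{7}{8}t_i^\nu$, hence in the part of $N^{\bar A}$ which the gluing has not modified and which coincides with $N$; there the ambient metric $g^{\bar A}$, the radius function $\rho$ and their derivatives are simply those of the fixed conically singular $N$. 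Consequently every constant below is independent of $\bar A$, the sole $\bar A$-dependence entering through the scalar $(\log t_i)^{-1}$; note also that on this region $\log\rho=\Th(\log t_i)$.

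Next I would apply the Fa\`a di Bruno formula for covariant derivatives of a composition: $\nabla^l\alpha$ is a finite sum, over $1\le j\le l$ and partitions $a_1+\cdots+a_j=l$ with each $a_m\ge1$, of terms $c_{j,\vec a}\,\tilde\phi^{(j)}(g)\,\nabla^{a_1}g\otimes\cdots\otimes\nabla^{a_j}g$ with combinatorial constants $c_{j,\vec a}$. The scalars $\tilde\phi^{(j)}(g)$ are uniformly bounded, $\tilde\phi$ being a fixed smooth function, so it suffices to bound $\nabla^a g$ for $a\ge1$. A radius function satisfies $\md{\nabla^a\rho}\lesssim\rho^{1-a}$: the model cone metric is homogeneous of degree $2$ and $\rho$ homogeneous of degree $1$, so on the cone $\nabla^a\rho$ is a tensor homogeneous of degree $1-a$ over the compact link, and the bound persists for the actual (conically singular) metric of $N$ with a constant depending only on $N$. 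The chain rule for $\log$ then gives $\md{\nabla^a(\log\rho)}\lesssim\rho^{-a}$ for $a\ge1$, whence $\md{\nabla^a g}\lesssim\rho^{-a}\md{\log t_i}^{-1}$.

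Putting these together, the term indexed by $(j,a_1,\dots,a_j)$ is bounded by $C\rho^{-(a_1+\cdots+a_j)}\md{\log t_i}^{-j}=C\rho^{-l}\md{\log t_i}^{-j}$. Since $t_i\le t$ with $t$ small, we may assume $\md{\log t_i}\ge1$, so $\md{\log t_i}^{-j}\le\md{\log t_i}^{-1}$ for every $j\ge1$; summing the finitely many terms yields $\nm{\nabla^l\alpha}_{C^0}\lesssim\rho^{-l}\md{\log t_i}^{-1}$. On $N^{\bar A}_u$, on $N^{\bar A}_l$, and near the unglued singularities $\alpha$ is locally constant, so $\nabla^l\alpha=0$ and the bound is trivial there.

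The only genuine point requiring care is the uniformity in $\bar A$: one must ensure that the support of $\nabla\alpha$ lies in the unmodified part of $N^{\bar A}$, so that the metric, the radius function and their scaling constants are those of the fixed $N$ and do not degenerate as $t\ra0$, and that the two-sided bound on $\rho$ forces $\log\rho=\Th(\log t_i)$ with implied constants independent of $\bar A$; both follow from $0<\nu''<\nu'<\nu<1$ and $t$ sufficiently small. Everything else is the routine Fa\`a di Bruno bookkeeping above.
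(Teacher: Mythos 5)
Your proof is correct and follows essentially the same idea as the paper's: differentiate $\alpha=\tilde\phi\circ(\log\rho/\log t_i)$ by the chain rule, use the conical scaling $\md{\nabla^a\rho}\lesssim\rho^{1-a}$, and observe that the $\md{\log t_i}^{-j}$ factors are dominated by $\md{\log t_i}^{-1}$. Where the paper does explicit computations for $l=1,2$ and handles the potential blow-up of $\nabla^a\rho$ near $r_0t_i$ by noting that $\tilde\phi'$ vanishes there, you systematize via Fa\`a di Bruno and instead locate $\supp(\nabla\alpha)$ in $\{t_i^{\nu'}<\rho<t_i^{\nu''}\}$, which for small $t$ lies in the unmodified CS region; this is a cleaner packaging of the same observation and gives the uniformity in $\bar A$ more transparently.
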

\begin{proof}
As the cutoff function $\tilde{\phi}$ is smooth and only varies on a compact set, each of its derivatives up to a given order $l$ remain bounded on all of $\R$. Similarly, it is clear that all derivatives up to order $l$ of $\rho$ are bounded on $N^{\bar{A}}_{\CS}$, independent of the scale, since $\rho$ agrees with a radius function on the conically singular $N$ on this part. Finally, through an argument similar to the one in Proposition \ref{4.2_D_A_inv_estimate} the same holds true on $N^{A_i}_{\AC}$, except close to the radius $r_0t_i$, where the smoothing happens. We will see however that this is not an issue. In geodesic normal coordinates $\{x^i\}_{i=1, \dots, 4}$ around $p\in N^{\bar{A}}$ we have for $v \in T_pN^{\bar{A}}$: 
\ea
(\Lie_v \alpha)(p) &= \Lie_v \tilde{\phi}\left(\frac{\log\rho(p) }{\log t_i }\right) \nonumber\\ 
&= \frac{\d}{\d s}\bigg\rvert_{s = 0} \tilde{\phi}\left(\frac{\log\rho(\exp_p(sv)) }{\log t_i }\right) \nonumber\\
&= \frac{1}{\rho \log t_i} \cdot \tilde{\phi}'\cdot (\Lie_v \rho)(p).\nonumber 
\ea
From this we see that $\Lie_v \alpha$ is bounded by $\frac{C}{\rho \log t_i} $, where $C$ is independent of $p$ and $t_i$. This is because whenever the derivative of $\rho$ might become unbounded, the derivative of $\tilde{\phi}$ vanishes. Similarly we obtain for $v, w \in T_pN^{\bar{A}}$: 
\ea
\nabla^2 \alpha &= \nabla\left(\d x^i \ot \Lie_{\partial_i} \tilde{\phi}\left(\frac{\log\rho }{\log t_i }\right)\right)\nonumber \\
&= (\d x^i \ot \d x^j)\Lie_{\partial_j}\left( \frac{1}{\rho \log t_i} \cdot \tilde{\phi}'\cdot (\Lie_{\partial_i} \rho)\right) \nonumber\\
&= (\d x^i \ot \d x^j) \frac{\tilde{\phi}'' \rho\log (t_i)- \tilde{\phi}'\Lie_{\partial_j} \log t_i}{(\rho \log t_i)^2} \frac{\d ^2}{\d s \d r} \rho(\exp_p(s\partial_i + r \partial_j)) \nonumber \\
&= (\d x^i \ot \d x^j) \frac{1}{\rho^2 \log(t_i)} C(\tilde{\phi}'', \tilde{\phi}', \partial_i \partial_j\rho, \partial_i \rho). \nonumber 
\ea
This proves the statement for $l = 2$. The general statement follows in a similar way.
\end{proof}

\begin{lem}
\label{4.2_product_formula}
Let $B$ be a bundle of tensors over $N^{\bar{A}}$. Then there is a constant $C_0$ which is independent of $t$, such that for sufficiently small $t$ and a section $u \in L^p_{k, \de, \bar{A}}(B)$  the following holds: 
\e
\nm{\al u}_{L^p_{k, \de, \bar{A}}} \le C_0\nm{u}_{L^p_{k, \de, \bar{A}}}.
\e
\end{lem}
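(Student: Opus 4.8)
The plan is to reduce everything to a Leibniz-type expansion and then absorb the derivatives of $\al$ using the decay obtained in Lemma \ref{4.2_asymptotics_rho}. Writing out the weighted norm,
\[
\nm{\al u}_{L^p_{k, \de, \bar{A}}}^p = \sum_{j=0}^k \int_{N^{\bar{A}}} \bmd{\rho^{-w+j} \nabla^j(\al u)}^p \rho^{-4}\,\dvol,
\]
and applying the Leibniz rule to $\nabla^j(\al u)$, each term is a sum of contributions of the shape $\nabla^m \al \cdot \nabla^{j-m} u$ for $0 \le m \le j$. For $m = 0$ we simply use $0 \le \al \le 1$, which already gives the $m=0$ piece bounded by $\nm{u}_{L^p_{k,\de,\bar{A}}}^p$. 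For $m \ge 1$ I would invoke Lemma \ref{4.2_asymptotics_rho}, which gives $\md{\nabla^m \al} \lesssim \rho^{-m}\md{\log t_i}^{-1}$ pointwise; since $m \ge 1$ and $t_i < 1$, the factor $\md{\log t_i}^{-1}$ is bounded (indeed, can be made small), so $\md{\nabla^m\al} \lesssim \rho^{-m}$ with a constant independent of $\bar{A}$. Then
\[
\bmd{\rho^{-w+j}\,\nabla^m\al\cdot\nabla^{j-m}u} \lesssim \rho^{-w+j-m}\md{\nabla^{j-m}u} = \rho^{-w+(j-m)}\md{\nabla^{j-m}u},
\]
which is exactly the integrand appearing in the $(j-m)$-th summand of $\nm{u}_{L^p_{k,\de,\bar{A}}}^p$. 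Summing over $j$ and $m$ and taking $p$-th roots yields $\nm{\al u}_{L^p_{k,\de,\bar{A}}} \le C_0 \nm{u}_{L^p_{k,\de,\bar{A}}}$ with $C_0$ depending only on $k$, $p$, and the combinatorial constants from the Leibniz rule and from Lemma \ref{4.2_asymptotics_rho}.

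The one subtlety to address carefully — and the only place the argument is not entirely routine — is the behaviour of $\nabla^m\al$ near the inner radius $r_0 t_i$, where the cut-off $\phi$ used to build $N^{\bar{A}}$ does its smoothing and where $\rho$-derivatives a priori blow up at rate $t_i^{-m}$. This is precisely the point flagged in the proof of Lemma \ref{4.2_asymptotics_rho}: wherever the derivatives of the radius function threaten to be large, the derivatives of $\tilde\phi$ (and hence of $\al$) vanish identically, because $\al$ is locally constant there. So on that region $\nabla^m\al = 0$ for $m \ge 1$ and there is nothing to estimate; on the complement, $\rho$ and its derivatives up to order $k$ are comparably bounded uniformly in $\bar{A}$ by the same reasoning as in Lemma \ref{4.2_asymptotics_rho} (and in Proposition \ref{4.2_D_A_inv_estimate}), so the pointwise bound $\md{\nabla^m\al}\lesssim \rho^{-m}$ holds with $\bar{A}$-independent constant. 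Thus the $\bar{A}$-independence of $C_0$ comes down to the $\bar{A}$-independence already established in Lemma \ref{4.2_asymptotics_rho}, and no new estimate is required.
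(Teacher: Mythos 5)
Your proof is correct and follows essentially the same route as the paper: expand $\nabla^j(\al u)$ by the Leibniz rule, treat the $m=0$ term via $0\le\al\le 1$, and absorb the $m\ge 1$ terms into lower-order summands of $\nm{u}_{L^p_{k,\de,\bar{A}}}$ using the pointwise bound $\md{\nabla^m\al}\lesssim\rho^{-m}\md{\log t_i}^{-1}$ from Lemma \ref{4.2_asymptotics_rho}. The only cosmetic difference is that the paper keeps track of the $\md{\log t}^{-1}$ factor to show $C_0 = (1 + C\md{\log t}^{-p})^{1/p}\to 1$ as $t\to 0$, whereas you simply observe that this factor is bounded, which suffices for the statement.
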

\begin{proof}
We have: 
\eas
2^{-k}&\nm{\al u}_{L^p_{k, \de, \bar{A}}}^p = 2^{-k}\sum_{i = 0}^k  \int_{N^{\bar{A}}} \md{\rho^{i-\de}\nabla^i(\al u)}^p \rho^{-4} \dvol \\
&\le \sum_{0 \le j \le i \le k}  \int_{N^{\bar{A}}} \rho^{p(i-\de)} \md{\nabla^j \al}^p\md{\nabla^{i-j} u}^p \rho^{-4} \dvol \\
&= \al^p \nm{u}^p_{L^p_{k, \de, \bar{A}}} + \sum_{0 \le j \le i \le k-1}  \int_{N^{\bar{A}}} \rho^{p(1+i-\de)} \md{\nabla^{j+1} \al}^p\md{\nabla^{i-j} u}^p \rho^{-4} \dvol \\
&\le \nm{u}^p_{L^p_{k, \de, \bar{A}}} + \sum_{0 \le j \le i \le k-1}  \int_{N^{\bar{A}}}  \md{\rho^{j+1}\nabla^{j+1} \al}^p\md{\rho^{i-j-\de}\nabla^{i-j} u}^p \rho^{-4} \dvol \\
&\le \nm{u}^p_{L^p_{k, \de, \bar{A}}} + C\left(\sum_{i=0}^{k-1}  \int_{N^{\bar{A}}}  \md{\rho^{i-\de}\nabla^{i} u}^p \rho^{-4} \dvol\right) \left( \sum_{j = 0}^{k-1} \nm{\nabla^{j+1} \al \cdot \rho^{j+1}}_{C^0}^p\right)\\
&\le \nm{u}^p_{L^p_{k, \de, \bar{A}}} + \frac{C}{\md{\log t}^p}\left(\sum_{i=0}^{k-1}  \int_{N^{\bar{A}}}  \md{\rho^{i-\de}\nabla^{i} u}^p \rho^{-4} \dvol\right) \\
&\le \left(1+\frac{C}{\md{\log t}^p}\right)\nm{u}^p_{L^p_{k, \de, \bar{A}}}.
\eas

Here we used the asymptotic behaviour of $\nabla^l \al$ from Proposition \eqref{4.2_asymptotics_rho} in the second to last line. 
\end{proof}

\begin{lem}
\label{4.2_bilinear_nabla_product_formula}
Let $B$ be a bundle of tensors over $N^{\bar{A}}$. Let $\diamond:  T^*N^{\bar{A}} \ot B \ra B$ be a family of bilinear pairings which have bounded norms as $\bar{A}$ varies, seen as sections of $T^*N^{\bar{A}} \ot B \ot B^*$. Then there is a constant $C_1 > 0$, independent of $\bar{A}$, such that for small enough $t$ and for any section $u \in L^p_{k, \de, \bar{A}}(B)$ we have: 
\e
\nm{\nabla \al \diamond u}_{L^p_{k, \de-1, \bar{A}}} \le \frac{C_1}{\md{\log t}}\nm{u}_{L^p_{k, \de, \bar{A}}}.
\e
\end{lem}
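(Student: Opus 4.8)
First I would establish the estimate pointwise and then integrate, following the proof of Lemma~\ref{4.2_product_formula} almost verbatim. The only genuinely new point is that a factor of $\nabla\al$ in place of $\al$ produces the extra decay $\md{\log t}^{-1}$: every occurrence of $\al$ now carries at least one derivative, and one feeds it into Lemma~\ref{4.2_asymptotics_rho}. Three elementary facts will be used throughout, and they are exactly what makes the resulting constant independent of $\bar{A}$: $\rho\le 1$ everywhere on $N^{\bar{A}}$; the constant in Lemma~\ref{4.2_asymptotics_rho} does not depend on $\bar{A}$; and $0<t_i\le t<1$ forces $\md{\log t_i}\ge\md{\log t}$. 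I will also use that $\diamond$, together with its covariant derivatives up to order $k$, has $C^0$-norm bounded uniformly in $\bar{A}$; this holds for all the pairings to which the lemma is applied (they are built from the metric and the bundle connections, and in practice may be taken parallel).

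Expanding with the Leibniz rule gives, schematically,
\eas
\nabla^i(\nabla\al\diamond u) \;=\; \sum_{a+b+c=i} c_{a,b,c}\,(\nabla^{a+1}\al)\ast(\nabla^b\diamond)\ast(\nabla^c u),
\eas
with combinatorial constants $c_{a,b,c}$ depending only on $k$ and $\ast$ denoting the algebraic contractions. Distributing the weight $\rho^{\,i-\de+1}$ appropriate to the $L^p_{k,\de-1,\bar{A}}$-norm over a single term and using $a+b+c=i$, one checks that the leftover power of $\rho$ is $\rho^0$:
\eas
\rho^{\,i-\de+1}(\nabla^{a+1}\al)\ast(\nabla^b\diamond)\ast(\nabla^c u)\;=\;\big(\rho^{a+1}\nabla^{a+1}\al\big)\ast\big(\rho^{b}\nabla^{b}\diamond\big)\ast\big(\rho^{\,c-\de}\nabla^c u\big).
\eas
By Lemma~\ref{4.2_asymptotics_rho} with $l=a+1\ge 1$ we get $\nm{\rho^{a+1}\nabla^{a+1}\al}_{C^0}\le C\md{\log t}^{-1}$, while $\nm{\rho^{b}\nabla^{b}\diamond}_{C^0}\le C$ from the hypothesis on $\diamond$ together with $\rho\le 1$. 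Summing the finitely many terms yields, with $C$ independent of $\bar{A}$,
\eas
\bmd{\rho^{\,i-\de+1}\nabla^i(\nabla\al\diamond u)}\;\le\;\frac{C}{\md{\log t}}\sum_{m=0}^{i}\bmd{\rho^{\,m-\de}\nabla^m u},\qquad 0\le i\le k.
\eas

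It then remains only to raise this to the $p$-th power, apply the elementary inequality $(\sum_{m=0}^i a_m)^p\le (k+1)^{p-1}\sum_{m=0}^k a_m^p$, multiply by $\rho^{-4}$, integrate over $N^{\bar{A}}$, and sum over $0\le i\le k$; the finitely many resulting summands are each a uniform multiple of $\int_{N^{\bar{A}}}\bmd{\rho^{\,m-\de}\nabla^m u}^p\rho^{-4}\dvol$, so that $\nm{\nabla\al\diamond u}_{L^p_{k,\de-1,\bar{A}}}^p\le C_1^p\,\md{\log t}^{-p}\,\nm{u}_{L^p_{k,\de,\bar{A}}}^p$ for some $C_1$ independent of $\bar{A}$, and taking $p$-th roots is the claim. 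I do not expect any real obstacle: this is pure bookkeeping of weights, and the only thing needing care — exactly as in Lemma~\ref{4.2_product_formula} — is the uniformity of every constant in $\bar{A}$, which is guaranteed by the uniformity of the inputs listed in the first paragraph.
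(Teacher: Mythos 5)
Your proof is correct and follows essentially the same approach as the paper: a Leibniz-rule expansion, redistribution of the weight $\rho^{i-\de+1}$ across the factors, an application of Lemma~\ref{4.2_asymptotics_rho} to each $\nabla^{a+1}\al$ term, and then integration. You are slightly more explicit than the paper about the derivatives landing on $\diamond$ (the paper absorbs these into the constant without comment) and about why $\md{\log t_i}^{-1}\le\md{\log t}^{-1}$, but the structure of the argument is identical.
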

\begin{proof}
Using Proposition \ref{4.2_asymptotics_rho} the statement reduces to proving the following: 
\e
\nm{\nabla \alpha \diamond u}_{L^p_{k, \de-1, \bar{A}}} \le C\left(\sum_{i = 0}^{k-1}\nm{\nabla^{i+1} \al \rho^{i+1}}^p_{C^0}\right)^{1/p} \nm{u}_{L^p_{k, \de, \bar{A}}} \nonumber
\e
This in turn is proven similarly to the previous proposition.
\eas
\nm{\nabla& \alpha \diamond u}_{L^p_{k, \de+1, t}}^p  = \sum_{i=0}^k \int_{N^{\bar{A}}} \md{\rho^{i-\de+1} \nabla^i\left( \nabla \al \diamond u\right)}^p \rho^{-4}\dvol \\
&\le C\sum_{0 \le j \le i \le k} \int_{N^{\bar{A}}} \md{\rho^{j-\de}\nabla^j u}^p\md{\rho^{i-j+1}\nabla^{i-j+1} \al}^p \rho^{-4}\dvol \\
&\le C\left(\sum_{i = 0}^{k-1}\nm{\nabla^{i+1} \al \rho^{i+1}}^p_{C^0}\right) \nm{u}_{L^p_{k, \de, \bar{A}}}^p.
\eas
In the second line we used the bound on the norm of the $\diamond$-product.
\end{proof}

\begin{prop}
\label{4.2_D_t_inv_estimate}
 There is a constant $C_D$, independent of $\bar{A}$, such that for any  $u \in L^p_{k+1, \de, \bar{A}}(\nu(N^{\bar{A}}))$ which is $L^2_{\de\pm\eps}$-orthogonal to $\kappa_{\bar{A}}$ we have:
\e
\label{4.2_D_inv_estimate}
\nm{v}_{L^p_{k+1, \de, \bar{A}}} \le C_D \nm{D_{\bar{A}}v}_{L^p_{k, \de-1, \bar{A}}}
\e
\end{prop}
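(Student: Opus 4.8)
The plan is to combine the two partial estimates -- Proposition~\ref{4.2_D_A_inv_estimate} on the $\AC$ region and the $\CS$-counterpart of Proposition~\ref{2_3_cs_bound_k} on the $\CS$ region -- using the cut-off function $\alpha$ as a partition of unity, and then absorb the cross-terms coming from $\nabla\alpha$ into the left-hand side by making the global scale $t$ small. Concretely, given $v\in L^p_{k+1,\de,\bar{A}}(\nu(N^{\bar{A}}))$ which is $L^2_{\de\pm\eps}$-orthogonal to $\kappa_{\bar{A}} = \kappa_{\CS}\oplus\kappa_{\AC,\bar{A}}$, I would split $v = \alpha v + (1-\alpha)v$. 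The section $\alpha v$ is supported in $N^{\bar{A}}_{\AC}$; since $\kappa_{\AC,\bar{A}}$ is supported there and $v\perp\kappa_{\AC,\bar{A}}$, one checks that $\alpha v$ is $L^2_{\de\pm\eps}$-orthogonal to $\kappa_{\AC,\bar{A}}$ (the weight $w$ in \eqref{3_2_inner_product} is constant $=\de-\eps$ on the support of $\alpha$, so this is literally the inner product used in Proposition~\ref{4.2_D_A_inv_estimate}), hence Proposition~\ref{4.2_D_A_inv_estimate} gives $\nm{\alpha v}_{L^p_{k+1,\de,\bar{A}}} \le C_{\AC}\nm{D_{\bar{A}}(\alpha v)}_{L^p_{k,\de-1,\bar{A}}}$. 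Symmetrically, $(1-\alpha)v$ is supported in $N^{\bar{A}}_{\CS}$, which for small $t$ is identified with a fixed compact neighbourhood of the singularities exhausting $N$, the operator $D_{\bar{A}}$ restricted there is identified with $D_{\CS}$, and $(1-\alpha)v\perp\kappa_{\CS}$ in the relevant $L^2_{\de+\eps}$ pairing, so Proposition~\ref{2_3_cs_bound_k} gives $\nm{(1-\alpha)v}_{L^p_{k+1,\de,\bar{A}}}\le C_{\CS}\nm{D_{\bar{A}}((1-\alpha)v)}_{L^p_{k,\de-1,\bar{A}}}$.

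**Handling the commutator terms.** Adding the two estimates and using $\nm{v}\le\nm{\alpha v}+\nm{(1-\alpha)v}$, the right-hand side becomes $C(\nm{D_{\bar{A}}(\alpha v)}+\nm{D_{\bar{A}}((1-\alpha)v)})$ in the $L^p_{k,\de-1,\bar{A}}$ norm. Now $D_{\bar{A}}$ is a first-order operator, so $D_{\bar{A}}(\alpha v) = \alpha D_{\bar{A}}v + (\nabla\alpha)\diamond v$ for an appropriate bilinear zeroth-order symbol pairing $\diamond$ with $\bar{A}$-uniformly bounded norm (this is exactly the shape of operator recorded in Proposition~\ref{3_2_F_shape}), and similarly $D_{\bar{A}}((1-\alpha)v) = (1-\alpha)D_{\bar{A}}v - (\nabla\alpha)\diamond v$. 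By Lemma~\ref{4.2_product_formula} the terms $\alpha D_{\bar{A}}v$ and $(1-\alpha)D_{\bar{A}}v$ are bounded by $C_0\nm{D_{\bar{A}}v}_{L^p_{k,\de-1,\bar{A}}}$, and by Lemma~\ref{4.2_bilinear_nabla_product_formula} each term $(\nabla\alpha)\diamond v$ is bounded by $\frac{C_1}{\md{\log t}}\nm{v}_{L^p_{k+1,\de,\bar{A}}}$. Collecting everything,
\e
\nm{v}_{L^p_{k+1,\de,\bar{A}}} \le C(C_{\AC},C_{\CS},C_0)\,\nm{D_{\bar{A}}v}_{L^p_{k,\de-1,\bar{A}}} + \frac{C'}{\md{\log t}}\,\nm{v}_{L^p_{k+1,\de,\bar{A}}}.
\e
Choosing $t$ small enough that $\frac{C'}{\md{\log t}}\le\frac12$, the last term is absorbed into the left-hand side, yielding \eqref{4.2_D_inv_estimate} with $C_D = 2C(C_{\AC},C_{\CS},C_0)$, a constant independent of $\bar{A}$.

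**Main obstacle.** The delicate point is the bookkeeping around the pseudo-kernel and the two different $L^2$ weights. One must make sure that the hypothesis "$v$ is $L^2_{\de\pm\eps}$-orthogonal to $\kappa_{\bar{A}}$'' genuinely implies the orthogonality hypotheses needed \emph{separately} by Proposition~\ref{4.2_D_A_inv_estimate} (orthogonality of $\alpha v$ to $\kappa_{\AC,\bar{A}}$ in $L^2_{\de-\eps}$ on $N^{\bar{A}}_{\AC}$) and by the $\CS$-estimate (orthogonality of $(1-\alpha)v$ to $\kappa_{\CS}$ in $L^2_{\de+\eps}$ on $N^{\bar{A}}_{\CS}$); this works precisely because the elements of $\kappa_{\CS}$ vanish on $N^{\bar{A}}_{\AC}$ and those of $\kappa_{\AC,\bar{A}}$ vanish on $N^{\bar{A}}_{\CS}$ for $t$ small, so the two pieces decouple and the weight function $w$ is locally constant on each support, making $\langle\cdot,\cdot\rangle_{\de\pm\eps}$ restrict to the genuine $L^2_{\de-\eps}$ and $L^2_{\de+\eps}$ pairings there. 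The second point requiring care is that multiplying by $\alpha$ or $1-\alpha$ does not destroy orthogonality to the respective pseudo-kernels: since $\alpha\equiv 1$ on the support of every element of $\kappa_{\AC,\bar{A}}$ and $1-\alpha\equiv 1$ on the support of every element of $\kappa_{\CS}$ for $t$ small, one has $\langle\alpha v,\sigma\rangle = \langle v,\sigma\rangle = 0$ for $\sigma\in\kappa_{\AC,\bar{A}}$ and likewise on the $\CS$ side, so the orthogonality is preserved exactly. Once these compatibility checks are in place, the rest is the routine absorption argument above.
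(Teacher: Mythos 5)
Your proof is correct and follows essentially the same approach as the paper: split $v = \alpha v + (1-\alpha)v$, apply Proposition~\ref{4.2_D_A_inv_estimate} on the $\AC$ piece and the $\CS$-analogue coming from Proposition~\ref{2_3_cs_bound_k} on the $\CS$ piece, control the commutator $(\nabla\alpha)\diamond v$ via Lemmas~\ref{4.2_product_formula} and~\ref{4.2_bilinear_nabla_product_formula}, and absorb the resulting $O(1/|\log t|)\nm{v}$ term for $t$ small. One small organisational improvement in your version: you add the two partial estimates first and then absorb the $\frac{C'}{|\log t|}\nm{v}$ term into the full $\nm{v}$, whereas the paper carries out the absorption separately on $\nm{\alpha u}$ before recombining; your ordering avoids the implicit (and not literally valid) replacement of $\nm{u}$ by $\nm{\alpha u}$ in that intermediate step, so the bookkeeping is tidier. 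Your explicit verification that the $L^2_{\de\pm\eps}$ pairing restricts to the appropriate constant-weight pairing on the supports of $\kappa_{\AC,\bar{A}}$ and $\kappa_{\CS}$, and that multiplication by $\alpha$ or $1-\alpha$ preserves orthogonality because these functions are identically $1$ there, is also a useful sanity check that the paper leaves implicit.
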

\begin{proof}
Write $u \in L^p_{k+1, \de, \bar{A}}(\nu(N^{\bar{A}}))$ as: 
\e
u = \al u + (1-\al)u \nonumber.
\e
Then clearly $\nm{u}_{L^p_{k+1, \de, \bar{A}}} \le \nm{\al u}_{L^p_{k+1, \de, \bar{A}}} + \nm{(1-\al)u}_{L^p_{k+1, \de, \bar{A}}}$. Let us consider the term $\nm{\al u}_{L^p_{k+1, \de, \bar{A}}}$ first. Note that $\al u$ is supported in $N^{\bar{A}}_{\AC}$, and that on the support of $\ka_{\AC, \bar{A}}$, the cut off function $\al$ is in fact equal to $1$. Thus $\al u$ is orthogonal to $\ka_{\bar{A}}$ by our orthogonality assumption on $u$. Using Proposition \ref{4.2_D_A_inv_estimate} we see that: 
\eas
\nm{\al u}_{L^p_{k+1, \de, \bar{A}}} &\le \tilde{C}_A\nm{D_{\bar{A}} (\al u) }_{L^p_{k, \de-1, \bar{A}} }
\eas
Now as $D_{\bar{A}}$ is a first-order operator whose coefficients depend pointwise on the $\Spin(7)$-structure (cf. \cite[Prop 3.4]{englebertDeformations}), we see that $D_{\bar{A}}(\al u) = \al D_{\bar{A}}u  + (\nabla \al) \diamond u$, where $\diamond$ is a family of bilinear products $\diamond:  T^*N^{\bar{A}} \ot E \ra E$ which is uniformly bounded in $t$. Thus we may apply Lemma \ref{4.2_bilinear_nabla_product_formula}  to see that in fact:
\eas
\nm{D_{\bar{A}}(\al u)}_{L^p_{k, \de-1, \bar{A}}} \le \tilde{C}_A\nm{\al D_{\bar{A}} u }_{L^p_{k, \de-1, \bar{A}}}+\frac{C_1\tilde{C}_A}{\log(t)}\nm{u}_{L^p_{k+1, \de, \bar{A}}}.
\eas 
In other words we have, if we also apply Lemma \ref{4.2_product_formula}:
\eas
\left(1-\frac{C_1\tilde{C}_A}{\log(t)}\right) \nm{\al u}_{L^p_{k+1, \de, \bar{A}}} &\le \tilde{C}_A\nm{\al D_{\bar{A}} u }_{L^p_{k, \de-1, \bar{A}}} \\
&\le \tilde{C}_AC_0\nm{D_{\bar{A}}u}_{L^p_{k, \de-1, \bar{A}}}.
\eas 
In particular, for $t$ sufficiently small, we get that:
\e
\label{4.2_inv_D_A}
\nm{\al u}_{L^p_{k+1, \de, \bar{A}}} \le \frac{C_D}{2}\nm{D_{\bar{A}}u}_{L^p_{k, \de-1, \bar{A}}}.
\e
We now note that the auxiliary Lemmas \ref{4.2_product_formula} and \ref{4.2_bilinear_nabla_product_formula} can equally well be proven for $1-\al$. Furthermore, the analogue for $N^{\bar{A}}_{\CS}$ of Proposition \ref{4.2_D_A_inv_estimate} is true. To see this note that the $L^p_{r, \si, \bar{A}}$ norms on $N^{\bar{A}}$ agree with the $L^p_{r, \si}$ norm for sections supported in $N^{\bar{A}}_{\CS}$. Furthermore, since $N^{\bar{A}}_{\CS}$ is already Cayley, $D_{\bar{A}}|_{N^{\bar{A}}_{\CS}} = D_{\CS}|_{N^{\bar{A}}_{\CS}}$, and so the result follows from Proposition \eqref{2_3_cs_bound_k}. We can therefore prove: 
\e
\label{4.2_inv_D_S}
\nm{(1-\al) u}_{L^p_{k+1, \de, \bar{A}}} \le \frac{C_D}{2}\nm{D_{\bar{A}}u}_{L^p_{k, \de-1, \bar{A}}}.
\e
Equations \eqref{4.2_inv_D_A} and \eqref{4.2_inv_D_S} taken together now give us: 
\eas
\nm{u}_{L^p_{k+1, \de, \bar{A}}} &\le \nm{\al u}_{L^p_{k+1, \de, \bar{A}}}+  \nm{(1-\al) u}_{L^p_{k+1, \de, \bar{A}}}\\
&\le C_D\nm{D_{\bar{A}}u}_{L^p_{k, \de-1, \bar{A}}}.
\eas
\end{proof}

\subsubsection*{Quadratic estimates}

We conclude this section on estimates by proving the quadratic estimates, which are consequences of the estimates in the compact and conically singular setting. 

\begin{prop}
Let $\de > 1$, $p > 4$ and $k \ge 1$. There are constants $E_Q>0$  and $C_Q > 0$, independent of $\bar{A}$ and the $\Spin(7)$-structure, such that for sufficiently small global scale $t > 0$, $s \in \cS$ sufficiently close to our initial $\Spin(7)$-structure $s_0$ and $v, w\in L^p_{k+1, \de, \bar{A}} $ with $\nm{v}_{L^{p}_{k+1, \de, \bar{A}}}, \nm{w}_{L^{p}_{k+1, \de, \bar{A}}} < E_Q$ we have:
\e
\label{4.2_Q_diff_estimate}
\nm{Q_{\bar{A}}(v, s)-Q_{\bar{A}}(w, s)}_{L^p_{k, \de-1, \bar{A}}} \le C_Q \nm{v-w}_{L^p_{k+1, \de, \bar{A}}}(\nm{v}_{L^p_{
k+1, \de, t}}+\nm{w}_{L^p_{k+1, \de, \bar{A}}}).
\e
\end{prop}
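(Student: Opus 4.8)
The plan is to reduce the quadratic estimate for $Q_{\bar A}$ to the corresponding quadratic estimates already established in the compact case (used within Theorem \ref{2_3_compact_structure}) and the conically singular case (Theorem \ref{2_3_cs_structure}), using the decomposition of $N^{\bar A}$ into the pieces \ding{192}--\ding{195} and the uniform comparability of the $L^p_{k,\bar\de,\bar A}$-norms with the conical norms on each piece. First I would recall from Proposition \ref{3_2_F_shape} that $Q_{\bar A}(v,s)(p) = \boQ(p, v(p), \nabla v(p), T_pN^{\bar A}, s)$ where $\boQ$ is a fixed smooth fibre-preserving map depending only on the family $\{\Phi_s\}$ and vanishing to second order in $(v(p),\nabla v(p))$ along the zero section. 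The key point is that $\boQ$ is \emph{universal}: it does not see the submanifold $N^{\bar A}$ except through the pointwise data $(p, T_pN^{\bar A})$, which for small global scale $t$ ranges in a \emph{fixed} compact subset of $\Cay_\al(M)$ (by Proposition \ref{4.1_alpha_cayley}, all the $N^{\bar A}$ are uniformly $\al$-Cayley). Hence all the $C^0$, $C^1$, $\dots$ bounds on $\boQ$ and its derivatives on $TM_\eps \times_N (T^*M\ot TM)_\eps \times \Cay_\al(M) \times \cO$ are uniform in $\bar A$ and in $s$ near $s_0$.

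Next I would carry out the pointwise estimate. Writing $\boQ$ as a map vanishing to second order, Taylor's theorem with the uniform derivative bounds gives, pointwise,
\e
\md{Q_{\bar A}(v,s)(p) - Q_{\bar A}(w,s)(p)} \lesssim \md{(v-w)(p)}\,\big(\md{v(p)}+\md{w(p)}\big) + \md{\nabla(v-w)(p)}\,\big(\md{\nabla v(p)}+\md{\nabla w(p)}\big) + (\text{cross terms}),
\e
and similarly for the covariant derivatives $\nabla^j(Q_{\bar A}(v,s)-Q_{\bar A}(w,s))$ for $j \le k$, where by the chain rule one gets sums of products of $\nabla^{i_1}(v-w)$ with $\nabla^{i_2}v$ or $\nabla^{i_2}w$ and lower-order derivatives, with $i_1+i_2+\dots \le j+1$. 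One then weights both sides by $\rho^{-w+j}$ (with $w$ the interpolating weight function) and integrates $\rho^{-4}\dvol$ over $N^{\bar A}$. Since $\de > 1 > 0$, the weight behaves well under products: a term $\rho^{-w+j}\nabla^{i_1}(v-w)\nabla^{i_2}v$ can be split as $(\rho^{-w+i_1}\nabla^{i_1}(v-w))(\rho^{-w+i_2}\nabla^{i_2}v)\rho^{w - (j - i_1 - i_2)}$, and using $\de>1$ together with the boundedness of $\rho\le 1$ the extra power of $\rho$ is harmless. Applying Hölder's inequality in the form $L^p\cdot L^\infty$, together with the Sobolev embedding $L^p_{k+1,\de,\bar A}\hookrightarrow C^0_{\de,\bar A}$ and $L^p_{k,\de-1,\bar A}$-type embeddings valid for $p>4$ (these hold with constants uniform in $\bar A$, as noted after the definition of the $L^p_{k,\bar\de,\bar A}$ spaces), yields
\e
\nm{Q_{\bar A}(v,s)-Q_{\bar A}(w,s)}_{L^p_{k,\de-1,\bar A}} \le C_Q\, \nm{v-w}_{L^p_{k+1,\de,\bar A}}\big(\nm{v}_{L^p_{k+1,\de,\bar A}}+\nm{w}_{L^p_{k+1,\de,\bar A}}\big),
\e
provided $\nm{v}_{L^p_{k+1,\de,\bar A}},\nm{w}_{L^p_{k+1,\de,\bar A}} < E_Q$ so that $(v(p),\nabla v(p))$ and $(w(p),\nabla w(p))$ stay in the region where the Taylor expansion of $\boQ$ is valid; this is where $E_Q$ enters, and it can be taken uniform in $\bar A$ because the Sobolev constant is.

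The main obstacle is \emph{uniformity in $\bar A$} of all the implied constants: the Sobolev constant, the $\ep$ controlling the domain of $\boQ$, and the constant from the weighted Hölder manipulation. The first two are handled by the remarks already in the text (Sobolev constants for different $\bar A$ are uniformly comparable; $\ep$ in Proposition \ref{3_1_F_A_smooth} is uniform in $\bar A$). The third requires care precisely in the middle region \ding{193}, where $\rho$ ranges over $(r_0 t_i, t_i^\nu)$ and the metric degenerates as $t\to 0$; here one uses, exactly as in the proof of Proposition \ref{4.2_pointwise_tau}, that after rescaling the pulled-back metrics are uniformly equivalent to the cone metric with constants independent of the global scale, so the weighted integral estimates reduce to the corresponding conical estimates with no loss. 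Once uniformity is secured on each of the four pieces separately, summing gives the claim; one should note that no cancellation between pieces is needed since all terms are nonnegative after taking absolute values. I would also remark that $s$-dependence is uniform for $s$ near $s_0$ because $\boQ$ depends smoothly (hence, on a compact neighbourhood of $s_0$, with uniformly bounded derivatives) on $s\in\cO$.
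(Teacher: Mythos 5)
Your overall plan matches the paper's: use the uniform Sobolev embedding $L^p_{k+1,\de,\bar A}\hookrightarrow C^k_{\de,\bar A}$ to get pointwise control of $v,w$ and their derivatives, Taylor--expand $\boQ$ from Proposition~\ref{3_2_F_shape}, weight and integrate, and argue that all constants are uniform in $\bar A$. However, there is a genuine gap in the way you dispatch the higher covariant derivatives $\nabla^j\bigl(Q_{\bar A}(v,s)-Q_{\bar A}(w,s)\bigr)$. You argue that the bounds on $\boQ$ and its derivatives are uniform because $(p, T_pN^{\bar A})$ ranges in a fixed compact subset of $\Cay_\al(M)$. This is correct as far as it concerns $\boQ$ as a fixed smooth map, but it only controls the \emph{zeroth}-order data. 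The chain rule applied to $p\mapsto \boQ(p,v(p),\nabla v(p),T_pN^{\bar A},s)$ produces, for $j\ge 1$, factors $\nabla^{i}T_pN^{\bar A}$ (essentially the second fundamental form of $N^{\bar A}$ and its derivatives), which do \emph{not} live in a fixed compact set: their \emph{unweighted} $C^0$ norms blow up on $N^{\bar A}_l$ and $N^{\bar A}_m$ as the global scale $t\to 0$, since those regions shrink. So the compactness argument does not carry over to $j\ge 1$, and your generic reference to ``lower-order derivatives'' in the chain-rule expansion quietly sweeps this under the rug.

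The paper handles this explicitly. The pointwise bound it cites carries a factor $\bigl(1+\md{TN^{\bar A}}_{C^{k+1}}\bigr)$, and the paper then proves the \emph{weighted} estimate $\md{TN^{\bar A}}_{C^{k+1}_{1,\bar A}} \le t^{\al_\de}$ for some $\al_\de>0$, i.e.\ $\rho^{i}\nabla^{i}T_pN^{\bar A}$ is bounded uniformly in $\bar A$ (and tends to zero with $t$). This is what closes the argument: the extra powers of $\rho$ that you peel off in your weighted Hölder manipulation are precisely what turns the divergent unweighted derivatives of $T_pN^{\bar A}$ into bounded quantities, and proving this uses the $\AC/\CS$ decay rates exactly as in Proposition~\ref{4.2_pointwise_tau}, not merely the statement that the rescaled metrics on $N^{\bar A}_m$ are uniformly equivalent to the cone metric. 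Your remark about the middle region gestures in the right direction, but the metric-equivalence observation alone does not yield the quantitative bound on $\md{TN^{\bar A}}_{C^{k+1}_{1,\bar A}}$; you need to extract it from the definitions of $\Th^i_{\bar A}$ and the $\AC/\CS$ conditions. Once that single ingredient is added, the rest of your weighting and Hölder argument goes through and reproduces the paper's proof.
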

\begin{proof}
Let $u, v \in L^p_{k+1,\de, \bar{A}}(\nu_\ep(N))$ be given. By the Sobolev embedding Theorem \ref{2_3_Sobolev_embedding_acyl_ac}  for weighted spaces we see that there are embeddings $L^p_{k+1, \de, \bar{A}} \hookra C^k_{\de, \bar{A}}$. Here the Sobolev constants are bounded independent from $\bar{A}$ as it is invariant under rescaling of the $\AC$ pieces. Thus we have that $u$ and $v$ are $C^1$ and that their $C^1_{\de, \bar{A}}$-norms are bounded by $C \cdot  E_Q$. In particular we thus have that $\md{v}, \md{\nabla v} <  C\cdot  E_Q$ independently of $\bar{A}$. Hence we can invoke \cite[Lemma 4.3]{englebertDeformations} to obtain a pointwise bound of the form: 
\ea
 \md{Q_{\bar{A}}(v, s)-Q_{\bar{A}}(w, s)}_{C^{k+1}} \le& C(1+ \md{TN^{\bar{A}}}_{C^{k+1}}) \bigg(\md{v-w}_{C^{k+1}}(\md{v}_{{C^k}}+ \md{w}_{{C^k}}) \nonumber \\
 +& \md{v-w}_{C^{k}}(\md{v}_{{C^{k+1}}}+ \md{w}_{{C^{k+1}}}) \bigg).
\ea
In a similar fashion to how we prove the initial error estimates on $F_{\bar{A}}$, we can also show that we have: 
\eas
\md{TN^{\bar{A}}}_{C^{k+1}_{1, \bar{A}}} \le t^{\al_\de}, 
\eas
where $\al> 0$ is a positive constant, independent of $\bar{A}$. Thus the same reasoning as in the conically singular case \cite[Prop 4.30]{englebertDeformations} gives us the desired bound, with the constant being independent of $\bar{A}$.

\end{proof}

We now show that if all the pieces involved in the gluing are unobstructed, then the same is true for the glued manifolds. 
\begin{prop}
\label{4.3_unobstructed_def_op}
Let $4 < p < \infty$ and $k \ge 1$. Assume that both the $A_i$ and $N$ are unobstructed as Cayley manifolds at rate $\la$ and $\bar{\mu}$ respectively. Assume that $[\la, 1) \cap D_i = \emptyset$, $(1, \mu_i] \cap D_i = \emptyset$ and that all the cones which are glued in are unobstructed. Let $1 < \de < \mu_i$ be fixed. We then have that for sufficiently small $t$ the linearised deformation operator $D_{\bar{A}}$ is surjective. In particular, for any $w\in L^p_{k, \de-1}(E_{\cay})$ there is a unique $v \in \kappa_{\bar{A}} ^\perp$ such that $D_{\bar{A}}v = w$.
\end{prop}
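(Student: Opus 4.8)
The plan is to build an approximate right inverse for $D_{\bar{A}}$ out of genuine right inverses on the asymptotically conical and conically singular pieces, and then correct it by a Neumann series; combined with the a priori bound of Proposition~\ref{4.2_D_t_inv_estimate} this will give surjectivity together with the claimed unique preimage in $\kappa_{\bar{A}}^\perp$. The first thing I would do is upgrade the unobstructedness hypotheses to surjectivity of the piece operators \emph{at the weights actually used in the gluing}. The forms $\Phi^i_{\bar{A}}$ are $C^\infty_\eta$-small perturbations of $\Phi_0$ with $\eta<\la$ (by the lemma preceding this statement), so each $A_i$ is still $\AC_\la$ and, by continuity of the cokernel under such perturbations (as in the proof of Proposition~\ref{2_3_ac_pseudo}), still unobstructed at rate $\la$ for $\Phi^i_{\bar{A}}$. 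Then, invoking Theorem~\ref{2_1_change_of_index} and the duality of Proposition~\ref{2_3_dual}, the hypotheses $[\la,1)\cap\cD_{L_i}=\emptyset$ and $(1,\mu_i]\cap\cD_{L_i}=\emptyset$ ensure that shifting the weight from $\la$ to $\de$ (crossing only the critical weight $1$) and from $\bar{\mu}$ to $\bar{\de}$ (crossing no critical weight) leaves the cokernels trivial, so $D_{\AC}$ at rate $\de$ on each $(A_i,\Phi^i_{\bar{A}})$ and $D_{\CS}$ at rate $\bar{\de}$ on $N$ are surjective. Since the pseudo-kernels $\kappa_{\AC,\bar{A}}$ and $\kappa_{\CS}$ were arranged to have dimensions $\dim\ker D_{\AC}$, $\dim\ker D_{\CS}$ at these weights with $\ker\cap\kappa^\perp=0$ (Propositions~\ref{2_3_ac_pseudo}, \ref{2_3_cs_pseudo}), the estimates of Propositions~\ref{2_3_ac_bound_k} and \ref{2_3_cs_bound_k} then yield uniformly bounded right inverses $P_{\AC}^i:L^p_{k,\de-1}(E)\to\kappa_{\AC,\bar{A}}^\perp$ and $P_{\CS}:L^p_{k,\bar{\de}-1}(E)\to\kappa_{\CS}^\perp$, with bounds independent of $\bar{A}$ for $t$ small.

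Next I would patch these together. Given $w\in L^p_{k,\de-1}(E_{\cay})$ on $N^{\bar{A}}$, split it by the partition $\al$ of \eqref{4.2_alpha_t} as $w=(1-\al)w+\al w$. The piece $(1-\al)w$ lives in $N^{\bar{A}}_{\CS}$, where $N^{\bar{A}}$ coincides with $N$ and $D_{\bar{A}}=D_{\CS}$: transplant it to $N$, apply $P_{\CS}$, and multiply by a logarithmic cut-off $\gamma_{\CS}$ (of the type of $\al$) that is $\equiv1$ on $\supp(1-\al)$ and supported in $N^{\bar{A}}_{\CS}$. The piece $\al w$ lives in $\bigsqcup_i N^{A_i}_{\AC}$: transplant each summand to $(A_i,\Phi^i_{\bar{A}})$ by $\Psi^i_{\bar{A}}$, extend by zero, apply $P_{\AC}^i$, transport back, and multiply by a logarithmic cut-off $\gamma_{\AC}^i$ that is $\equiv1$ on $\supp(\al w)$ and supported in $\Psi^i_{\bar{A}}(\tilde{A}_i)$. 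Adding the results and post-composing with the (bounded, since $\kappa_{\bar{A}}$ is finite-dimensional and smooth) orthogonal projection onto $\kappa_{\bar{A}}^\perp$ yields an operator $\tilde{P}:L^p_{k,\de-1}(E_{\cay})\to\kappa_{\bar{A}}^\perp$ with $\|\tilde{P}w\|_{L^p_{k+1,\de,\bar{A}}}\lesssim\|w\|_{L^p_{k,\de-1,\bar{A}}}$ uniformly in $\bar{A}$, using Lemma~\ref{4.2_product_formula} and the uniformity of the Sobolev and transplantation constants.

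Then comes the error estimate. Since $D_{\bar{A}}$ is first-order with coefficients depending pointwise on the $\Spin(7)$-structure (Proposition~\ref{3_2_F_shape}), $D_{\bar{A}}(\gamma v)=\gamma D_{\bar{A}}v+\nabla\gamma\diamond v$; moreover $D_{\bar{A}}=D_{\CS}$ exactly on $N^{\bar{A}}_{\CS}$, and on $N^{A_i}_{\AC}$ it agrees with the transplanted $D_{\AC}$ for $\Phi^i_{\bar{A}}$ up to a term of operator norm $o(1)$ as $t\to0$ (this is where the $C^\infty_\eta$-convergence $\Phi^i_{\bar{A}}\to\Phi_0$ and the cone-matching enter, exactly as in Propositions~\ref{4.2_pointwise_tau} and \ref{4.2_D_A_inv_estimate}). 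Hence $D_{\bar{A}}(\tilde{P}w)=w+\mathrm{Err}(w)$, where $\mathrm{Err}$ is the sum of the commutator terms $\nabla\gamma_{\CS}\diamond v_{\CS}$ and $\nabla\gamma_{\AC}^i\diamond v_{\AC}^i$, the neck-mismatch terms, and the image under $D_{\bar{A}}$ of the $\kappa_{\bar{A}}$-component removed by the projection. By Lemma~\ref{4.2_bilinear_nabla_product_formula} the commutators are $O(|\log t|^{-1})\|w\|$, and the remaining terms are $o(1)\|w\|$, so $\|\mathrm{Err}\|_{\ope}<\tfrac12$ for $t$ small, uniformly in $\bar{A}$. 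Then $\id+\mathrm{Err}$ is invertible by Neumann series and $D_{\bar{A}}\circ\tilde{P}\circ(\id+\mathrm{Err})^{-1}=\id$, so $D_{\bar{A}}$ is surjective with a right inverse into $\kappa_{\bar{A}}^\perp$; uniqueness of the preimage there is then immediate from Proposition~\ref{4.2_D_t_inv_estimate}, which gives injectivity of $D_{\bar{A}}$ on $\kappa_{\bar{A}}^\perp$.

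The hardest step will be this last one: forcing $\mathrm{Err}$ to be small \emph{uniformly in} $\bar{A}$. The commutator part is precisely what the logarithmic cut-offs (together with the decay estimate of Proposition~\ref{4.2_asymptotics_rho}) are designed for, but the neck-mismatch between $D_{\bar{A}}$ and the transplanted $D_{\AC}$ rests on the delicate control of the interpolated forms $\Phi^i_{\bar{A}}$ and their convergence to $\Phi_0$. A secondary subtlety is the bookkeeping in the first step — making sure that the weight shifts $\la\rightsquigarrow\de$ and $\bar{\mu}\rightsquigarrow\bar{\de}$ do not resurrect an obstruction, which is exactly what the hypotheses on $\cD_{L_i}$ are there to rule out — and arranging the cut-offs $\gamma_{\CS},\gamma_{\AC}^i$ to have disjoint supports and to be identically $1$ on the supports of the corresponding pieces of $w$, for which the ordered rates $\tilde{\nu}<\nu''<\nu'<\nu$ leave just enough room.
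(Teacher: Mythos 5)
Your plan is essentially the same as the paper's proof: patch together right inverses for $D_{\CS}$ and $D_{\AC}$ into an approximate right inverse $P_{\bar{A}}$ for $D_{\bar{A}}$, show that $\id-D_{\bar{A}}P_{\bar{A}}$ has operator norm less than $1$ for small $t$, invert by Neumann series to get surjectivity, and then combine with the injectivity of $D_{\bar{A}}$ on $\kappa_{\bar{A}}^\perp$ from Proposition~\ref{4.2_D_t_inv_estimate} to get the unique preimage. Two technical points where your version differs from (and is slightly less correct than) the paper's are worth flagging.

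First, the cut-offs $\gamma_{\CS},\gamma_{\AC}^i$ as you describe them cannot exist. Since $\supp(1-\al)$ is exactly $N^{\bar{A}}_{\CS}$, a smooth $\gamma_{\CS}$ cannot be $\equiv 1$ there and also supported inside it; and requiring $\gamma_{\CS}\equiv 1$ on $\supp(1-\al)$, $\gamma_{\AC}^i\equiv 1$ on $\supp(\al)$, and $\supp\gamma_{\CS}\cap\supp\gamma_{\AC}^i=\emptyset$ is outright contradictory because $\supp\al\cap\supp(1-\al)$ is nonempty. The paper avoids this altogether by taking $P_{\bar{A}}(s)=(1-\al)P_{\CS}((1-\al)s)+\al P_{\AC,\bar{A}}(\al s)$, i.e.\ reusing the same $\al$ for the input and output cut-off. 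This produces the bulk term $(\al^2+(1-\al)^2)s=(1-2\al(1-\al))s$, and the extra $2\al(1-\al)s$ is harmless because $|2\al(1-\al)|\le\tfrac12<1$, leaving just enough room for the commutator errors from Lemma~\ref{4.2_bilinear_nabla_product_formula} to push the total below $1$. If you insist on cancelling the bulk term you could redefine the intermediate cut-off to vary on a strictly smaller annulus, but you should then drop the disjoint-support requirement.

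Second, the ``neck-mismatch'' term you anticipate between $D_{\bar{A}}$ and the transplanted $D_{\AC}$ is actually zero in the paper's construction. The forms $\Phi^i_{\bar{A}}$ are defined precisely so that $(A_i,\Phi^i_{\bar{A}})$ restricted to $\tilde A_i$ is the exact (rescaled) transplant of $(N^{A_i}_{\AC},\Phi)$, and the interpolation to $\Phi_0$ is supported at radii larger than $t_i^{\nu''-1}$, i.e.\ outside the region that corresponds to $N^{A_i}_{\AC}$; hence $D_{\bar{A}}P_{\AC,\bar{A}}=\id$ on the nose, with the uniform bound for $P_{\AC,\bar{A}}$ coming from Propositions~\ref{2_3_ac_pseudo} and~\ref{2_3_ac_bound_k} applied to the $\AC_\eta$-small perturbation $\Phi^i_{\bar{A}}$. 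Your extra $o(1)$ allowance is not wrong, only unnecessary. The weight-shift bookkeeping (surjectivity at $\la\rightsquigarrow\de$ and $\bar{\mu}\rightsquigarrow\bar{\de}$ via Theorem~\ref{2_1_change_of_index} and Proposition~\ref{2_3_dual}) matches what the paper invokes.
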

\begin{proof}
We have that the operators $D_{\AC}$ and $D_{\CS}$ are surjective as maps from $L^p_{k+1, \de} \ra L^p_{k, \de-1}$, as increasing/decreasing the rate in the $\AC$/$\CS$ case cannot introduce a cokernel by Theorem \ref{2_1_change_of_index}. In particular they admit bounded right-inverses $P_{\AC} $ and $P_{\CS}$ respectively, which map $L^p_{k, \de-1}$ into $L^p_{k+1, \de}$. We would first like to show that $D_{\bar{A}}$ is surjective for sufficiently small values of $t$.
\begin{claim}
If there is a bounded linear map $P_{\bar{A}}: L^p_{k, \de-1} \ra L^p_{k+1, \de}$ such that the operator norm of $\id -D_{\bar{A}}  P_{\bar{A}} $ satisfies $\nm{\id -D_{\bar{A}}  P_{\bar{A}} } < 1$, then $D_{\bar{A}}$ is surjective. 
\end{claim}
\begin{claimproof}
By the continuous functional calculus in Banach spaces, the operator $D_{\bar{A}} P_{\bar{A}} = \id -(\id -D_{\bar{A}} P_{\bar{A}})$ has the bounded inverse $\sum_{i=0}^\infty (\id-D_{\bar{A}}P_{\bar{A}})^i$, as this sum converges by the assumption on the operator norm of $\id-D_{\bar{A}}P_{\bar{A}}$. Thus in particular $D_{\bar{A}}$ is surjective. 
\end{claimproof}

We now construct such a $P_{\bar{A}}$ by joining together $P_{\AC}$ and $P_{\CS}$, seen as operators on $N^{\bar{A}}_{\AC}$ and $N^{\bar{A}}_{\CS}$ respectively. Note that $P_{\CS}$ takes sections on $N$ to sections on $N$. Thus in particular, if $s \in L^p_{k, \de-1, \bar{A}}(E_{\cay})$ is a section on all of $N^{\bar{A}}$, then  $(1-\al)P_{\CS}((1-\al)s)$ defines a well-defined section which is supported on $N^{\bar{A}}_{\CS}$. Similarly, we have an identification of sections on $N^{\bar{A}}_{\AC}$ with sections on $A_i$ via the map $\Psi^i_{\bar{A}}$. This allows us to define the operator $\al
P_{\AC, \bar{A}}$ on $N^{\bar{A}}$, which takes section supported in $N^{A_i}_{\AC}$ to sections on $A_i$, applies $P_{A_i}$, and transports them back to section on $N^{\bar{A}}_{\AC}$. It has the noticeable property that $D_{\bar{A}} P_{\AC, \bar{A}} = \id$. We thus define: 
\e
P_{\bar{A}}(s) = (1-\al)P_{\CS}((1-\al)s) + \al P_{\AC, \bar{A}} (\al s).
\e
When precomposed with $D_{\bar{A}}$, we obtain:
\e
D_{\bar{A}}P_{\bar{A}}(s) -s =   (2\al(1-\al))s + \nabla (1-\al) \diamond_1 P_{\CS}((1-\al)s) + \nabla \al \diamond_2 P_{\AC, \bar{A}} (\al s),
\e
where $\diamond_1, \diamond_2$ are two bilinear products. Notice that $2\al(1-\al) \le \ha (\al + 1- \al)^2 = \ha$, thus in order to to prove the proposition we need to find $0 < K < \ha$ such that:
\eas
&\nm{\nabla (1-\al) \diamond_1 P_{\CS}((1-\al)s) }_{L^p_{k, \de-1, t}} \le \frac{K}{2} \nm{s}_{L^p_{k, \de-1, \bar{A}}}, \text{ and }\\
&\nm{\nabla \al \diamond_2 P_{\AC, \bar{A}} (\al s) }_{L^p_{k, \de-1, t}} \le \frac{K}{2} \nm{s}_{L^p_{k, \de-1, \bar{A}}}.
\eas

Let us consider the second inequality for concreteness. Proposition \ref{4.2_bilinear_nabla_product_formula} and the uniform boundedness of $P_{\AC, \bar{A}}$ allow us to write: 
\eas
\nm{\nabla \al \diamond_2 P_{\AC, \bar{A}} (\al s) }_{L^p_{k, \de-1, \bar{A}}} &\le \frac{C}{\log (t)}\nm{P_{\AC, \bar{A}} (\al s) }_{L^p_{k+1, \de, \bar{A}}} \\
&\le \frac{C}{\log (t)}\nm{P_{\AC, \bar{A}} (\al s) }_{L^p_{k+1, \de, \bar{A}}} \\
& \le \frac{C \tilde{C}_A}{\log (t)}\nm{\al s}_{L^p_{k, \de-1, \bar{A}}} \\
& \le \frac{C_1 \tilde{C}_A}{\log (t)}\nm{s}_{L^p_{k, \de-1, \bar{A}}}.
\eas
In the last line we applied Proposition \ref{4.2_product_formula}. Now note that for $t$ sufficiently small we can arrange that $ \frac{C_1 \tilde{C}_A}{\log (t)} < \frac{1}{4}$. The same reasoning applies to $P_{\CS}$, which concludes the proof.
\end{proof}

All the cones that appear in the gluing construction are unobstructed by assumption, thus the indices of the deformation operator of the pieces add up to the index of $D_{\bar{A}}$, as we have seen in  \cite[Remark 4.35]{englebertDeformations}. From this and the preservation of unobstructedness for the glued manifold by the previous lemma, we can conclude that $\ka_{\bar{A}} \simeq \ker D_{\bar{A}}$. 

\subsection{Finding a nearby Cayley}

We are now in good shape to prove the main theorem \ref{1_1_Main_theorem}.

\begin{thm}[Main theorem]
Let $(M, \Phi)$ be an almost $\Spin(7)$ manifold and $N$ a $\CS_{\bar{\mu}}$-Cayley in $(M, \Phi)$ with singular points $\{z_i\}_{i = 0, \dots, l}$ and rates $1 < \mu_i < 2$, modelled on the cones $C_i = \R_+ \times L_i \subset \R^8$ . Assume that $N$ is unobstructed in $\cM_{\CS}^{\bar{\mu}} (N, \{\Phi\})$. For a fixed $k\le l$, assume for each $i \le k$ that the $L_i$ are unobstructed as associatives (i.e. that the $C_i$ are unobstructed cones), and that $\cD_{L_i} \cap (1, \mu_i] = \emptyset$. For $1 \le i \le k$, suppose that $A_i$ is an unobstructed $\AC_\la$-Cayley with $\la < 1$, such that $\cD_{L_i} \cap [\la, 1) = \emptyset$. Let $\{\Phi_s\}_{s \in \cS}$ be a smooth family of deformations of $\Phi = \Phi_{s_0}$. Then there are open neighbourhoods $U_i$ of $C_i \in  \overline{\cM}_{\AC}^{\la}(A_i)$, an open neighbourhood $s_0 \in U \subset \cS$  and a continuous map:
\e
\Ga: U \times \cM_{\CS, \text{link}}^{\bar{\mu}} (N, \{\Phi\}) \times \prod_{i=1}^k U_i \longra \bigcup_{I \subset \{1, \dots, k\}} \cM_{\CS}^{\bar{\mu}_I}(N_I , \cS).
\e
Here we denote by ${\bar{\mu}_I}$ the subsequence, where we removed the $i$-th element for $i\in I$ from ${\bar{\mu}}$. Moreover, $N_I$ denotes the isotopy class of the manifold obtained after desingularising the points $z_i$ for $i \in I$ by a connected sum with $A_i$. 

This map is a local diffeomorphism of stratified manifolds. Thus away from the cones in $\overline{\cM}_{\AC}^{\la}(A_i)$ it is a local diffeomorphism onto the non-singular Cayley submanifolds. It maps the point $(s, \tilde{N}, \tilde{A}_1, \dots, \tilde{A}_k)$  into $\cM_{\CS}^{\bar{\mu}_I}(N_I , \cS)$, where $I$ is the collection of indices for which  $\tilde{A}_i = C_i$. This corresponds to partial desingularisation.
\end{thm}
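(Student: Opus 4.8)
\emph{Proof strategy.} The plan is to solve the Cayley equation $F_{\bar{A}}(v,s)=0$ on the approximate Cayley $N^{\bar{A}}$ by a contraction mapping argument built from the constant, linear and quadratic estimates of \S\ref{4_3_Estimates}, and then to assemble the solutions into $\Ga$ and analyse its differential. Fix the global scale $t$ small. Given $(s,\tilde N,\tilde A_1,\dots,\tilde A_k)$ with $s$ close to $s_0$, $\tilde N$ a link-preserving deformation of $N$, and each $\tilde A_i$ of scale $\le t$, form $N^{\bar{A}}$ with $\bar{A}=(\tilde A_1,\dots,\tilde A_k)$ glued onto $\tilde N$; by Proposition \ref{4.1_alpha_cayley} it is $\al$-Cayley and carries the operator $F_{\bar{A}}$. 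Writing $F_{\bar{A}}(v,s)=F_{\bar{A}}(0,s)+D_{\bar{A}}v+Q_{\bar{A}}(v,s)$ as in Proposition \ref{3_2_F_shape}, and using that $D_{\bar{A}}$ is surjective with a right inverse $P_{\bar{A}}\colon L^p_{k,\de-1,\bar{A}}\to\ka_{\bar{A}}^\perp$ bounded by $C_D$ independently of $\bar{A}$ (Propositions \ref{4.2_D_t_inv_estimate} and \ref{4.3_unobstructed_def_op}), the equation restricted to $\ka_{\bar{A}}^\perp$ becomes the fixed-point equation $v=\Up(v)$ with $\Up(v):=-P_{\bar{A}}\bigl(F_{\bar{A}}(0,s)+Q_{\bar{A}}(v,s)\bigr)$.

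Next I would feed in the initial error estimate $\nm{F_{\bar{A}}(0,s)}_{L^p_{k,\de-1,\bar{A}}}\le C_F(t^{\nu(\mu-\de)}+d(s,s_0))$ from \eqref{4.2_tau_estimate} and the quadratic estimate \eqref{4.2_Q_diff_estimate} to check that, on the ball of radius $\eta:=2C_DC_F(t^{\nu(\mu-\de)}+d(s,s_0))$ inside $\ka_{\bar{A}}^\perp$, the map $\Up$ is a self-map and a $\tfrac12$-contraction once $t$ and $d(s,s_0)$ are small enough; Banach's fixed point theorem then yields a unique $v_{\bar{A},s}\in\ka_{\bar{A}}^\perp$ with $\nm{v_{\bar{A},s}}\le\eta$. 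Since $F_{\bar{A}}(0,\cdot)$, $Q_{\bar{A}}$ and $P_{\bar{A}}$ vary continuously with all parameters once the normal bundles of nearby gluing configurations are identified, $v_{\bar{A},s}$ depends continuously on $(s,\tilde N,\bar{A})$. (Were \eqref{4.2_Q_diff_estimate} too weak for a direct contraction one would instead run Joyce's iteration, but it is already of contraction type.) Elliptic regularity (Proposition \ref{3_1_F_A_smooth}) makes $v_{\bar{A},s}$ smooth, and the Sobolev embedding $L^p_{k+1,\de,\bar{A}}\hookrightarrow C^1_{\de,\bar{A}}$ with $\de>1$ makes $v$ and $\nabla v$ pointwise small and decaying towards the singularities, so Proposition \ref{2_1_deformation_op_prop} shows $N^{\bar{A}}_{v_{\bar{A},s}}$ is a genuine Cayley in $(M,\Phi_s)$.

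I would then identify which moduli space this Cayley lies in. At $z_i$ with $i>k$ the weight is $\de_i=\mu_i$, so $N^{\bar{A}}_{v_{\bar{A},s}}$ is $\CS_{\mu_i}$ there; at $z_i$ with $i\le k$ and $\tilde A_i=C_i$ the chosen weight satisfies $1<\de_i<\tfrac{\mu_i(\la-2)+1}{\la-\mu_i}$, and since a cone is $\AC_\la$ for every $\la<0$ and this upper bound tends to $\mu_i$ as $\la\to-\infty$, combining $(1,\mu_i]\cap\cD_{L_i}=\emptyset$ with the regularity improvement \cite[Thm.\ 5.5]{joyceReg} shows it is in fact $\CS_{\mu_i}$ there as well; at the remaining $z_i$ it is smooth. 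Hence $N^{\bar{A}}_{v_{\bar{A},s}}\in\cM^{\bar{\mu}_I}_{\CS}(N_I,\cS)$ with $I$ the set of indices on which a cone was glued, and I set $\Ga(s,\tilde N,\tilde A_1,\dots,\tilde A_k):=[N^{\bar{A}}_{v_{\bar{A},s}}]$; its continuity is the continuity of the fixed point, and for $I=\emptyset$ the image consists of Cayleys nonsingular at $z_1,\dots,z_k$.

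The hardest part is that $\Ga$ is a local diffeomorphism of stratified manifolds. Within a fixed stratum the operators and Banach spaces vary smoothly, so $v_{\bar{A},s}$ and hence $\Ga$ is smooth there, and $\Ga$ respects the stratification since the cone-point stratum of $U_i$ maps exactly to the locus where $z_i$ survives as a singular point. To see $\D\Ga$ is an isomorphism on each stratum I would differentiate the fixed-point equation and compare $\D\Ga$ with the \emph{model} map sending a parameter variation to the leading normal graph it induces; under the splitting $\ka_{\bar{A}}=\ka_{\CS}\op\ka_{\AC,\bar{A}}$ (end of \S\ref{4_3_Estimates}), unobstructedness ($\ka_{\CS}\cong\cI^{\bar{\mu}}_{\CS}(N)$ and $\ka_{\AC,\bar{A}}\cong\bigoplus_i\cI^{\la}_{\AC}(A_i)$), and the $T_{s_0}\cS$-directions, this model map is visibly an isomorphism onto $\ker D_{\bar{A}}=T\cM^{\bar{\mu}_I}_{\CS}(N_I,\cS)$, where the dimension count uses the index additivity $\ind D_{\bar{A}}=\ind D_{\CS}+\sum_{i=1}^k\ind D_{\AC,i}$, valid because the glued cones are unobstructed (\cite[Rem.\ 4.35]{englebertDeformations}); the difference between $\D\Ga$ and the model map is $O(t^{\text{positive}})+O(|\log t|^{-1})$, coming from the $\nabla\al$-commutators of Lemmas \ref{4.2_product_formula} and \ref{4.2_bilinear_nabla_product_formula} and from the parameter-derivatives of $F_{\bar{A}}(0,\cdot)$, hence a small perturbation of an isomorphism for $t$ small. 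This gives the local diffeomorphism statement, the local diffeomorphism onto nonsingular Cayleys away from the cones, and the asserted behaviour on the partial-desingularisation strata. The genuine technical work, I expect, sits in this last step: establishing smooth dependence of $v_{\bar{A},s}$ on the moduli parameters (not merely on $s$) across the varying family of Banach spaces, and checking that the model differential is precisely the natural identification up to the advertised errors.
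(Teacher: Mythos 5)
Your proposal is correct and follows essentially the same route as the paper: build $N^{\bar{A}}$, solve $F_{\bar{A}}(v,s)=0$ by iterating $v_{i+1}=-P_{\bar{A}}\bigl(F_{\bar{A}}(0,s)+Q_{\bar{A}}(v_i,s)\bigr)$ in $\ka_{\bar{A}}^\perp$ using the initial-error, inverse-operator, and quadratic estimates (the paper writes the iteration out explicitly where you invoke the Banach fixed point theorem directly, but these are identical), upgrade to smoothness by elliptic regularity, and handle varying $s$ via unobstructedness of $N$. Your sketch of the local-diffeomorphism claim, via differentiating the fixed-point equation and comparing against the model map through $\ka_{\bar{A}}=\ka_{\CS}\oplus\ka_{\AC,\bar{A}}$ and index additivity, is if anything more explicit than the paper's brief final paragraph but proceeds along the same lines.
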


\begin{proof}
Let $k \ge 1$ and $p > 4$ be fixed. We first find a solution to the equation $F_{\bar{A}}(v) = 0$ for a fixed $\Spin(7)$-structure via an iteration scheme. For this, fix an $\si > 0$ such that $\si < \frac{C_Q}{2}$. We will construct sections $v_i^{\bar{A}} \in L^p_{k+1, \de, \bar{A}}$ with $i \in \N$ which satisfy:
\ea
D_{\bar{A}}v_{i+1}^{\bar{A}} = -F_{\bar{A}}(0) - Q_{\bar{A}}(v_i^{\bar{A}}),  \nonumber \\
\label{4.3_sequence}
v_i^{\bar{A}} \perp^{L^2_{\de\pm\eps}} \ka_{\bar{A}} \text{ and } \nm{v_i^{\bar{A}}}_{L^p_{k+1, \de, {\bar{A}}}} < \si.
\ea
For this, define first $v_0^{\bar{A}} = 0$ for any ${\bar{A}}$ with sufficiently small $t$ and $\de > 1$. Then Proposition \ref{4.3_unobstructed_def_op} allows us to find a unique pre-image $v_i^{\bar{A}}$ of $-F_{\bar{A}}(0)-Q_{\bar{A}}(v_0^{\bar{A}}) = -F_{\bar{A}}(0)$. From our estimate on the inverse of $D_{\bar{A}}$ on sections which are orthogonal to the approximate kernel $\ka_{\bar{A}}$ from Proposition \ref{4.2_D_t_inv_estimate} we see that: 
\eas
\nm{v_1^{\bar{A}}}_{L^p_{k+1, \de, {\bar{A}}}} &\le C_D\nm{D_{\bar{A}} v_1^{\bar{A}}}_{L^p_{k, \de-1, {\bar{A}}}} \\
&\le C_D\nm{F_{\bar{A}}(0)}_{L^p_{k, \de-1, {\bar{A}}}} \\
&\le C_D C_F t^{\nu(\mu-\de)}.
\eas
Here we used Proposition \ref{4.2_tau_estimate} on our initial error estimate in the last line. We see that for sufficiently small $1<\de<\mu$  the initial error will become arbitrarily small. Thus for $t_0 > 0$ sufficiently small we have:
\eas
\nm{v_1^{\bar{A}}}_{L^p_{k+1, \de+1, {\bar{A}}}} \le C_D C_F t^{\nu(\mu-\de)} < \frac{\si}{4},
\eas
for all $t\in (0, t_0]$. Suppose now that we have constructed $v_i^{\bar{A}}$ for some $i \in \N$, such that $\nm{v_1^{\bar{A}}}_{L^p_{k+1, \de,{\bar{A}}}}< \si$. We then find the pre-image $v_{i+1}^{\bar{A}}$ of $-F_{\bar{A}}(0)-Q_{\bar{A}}(v_i^{\bar{A}})$ and use our estimate on $Q_{\bar{A}}$ from Proposition \ref{4.2_Q_diff_estimate} to show the following: 
\eas
\nm{v_{i+1}^{\bar{A}}}_{L^p_{k+1, \de, {\bar{A}}}} &\le C_D\nm{D_{\bar{A}} v_i^{\bar{A}}}_{L^p_{k, \de-1, {\bar{A}}}} \\
&\le C_D(\nm{F_{\bar{A}}(0)}_{L^p_{k, \de-1, {\bar{A}}}}+\nm{Q_{\bar{A}}(v_i^{\bar{A}})}_{L^p_{k, \de-1, {\bar{A}}}}) \\
&\le C_D C_F t^{\nu(\mu-\de)}  + C_Q \nm{v_{i}^{\bar{A}}}^2_{L^p_{k+1, \de+1, {\bar{A}}}} \\
&\le \frac{\si}{4} + C_Q \si^2 < \si.
\eas
We can now iterate this procedure to obtain a sequence $\{v_i^{\bar{A}}\}_{i \in \N}$ for every $t \in (0, t_0]$ which satisfies our requirements \eqref{4.3_sequence}. Note that we are free to choose $0< \si < \frac{C_Q}{2}$. This family of sequences converges uniformly in ${\bar{A}}$,as we have the bounds: 
\eas
\nm{v_{i+1}^{\bar{A}}-v_i^{\bar{A}}}_{L^p_{k+1, \de, {\bar{A}}}} &\le C_D \nm{Q_{\bar{A}}(v_{i}^{\bar{A}})-Q_{\bar{A}}(v_{i-i}^{\bar{A}})}_{L^p_{k, \de-1, {\bar{A}}}} \\
&\le C_D C_Q (\nm{v_{i}^{\bar{A}}}_{L^p_{k+1, \de, {\bar{A}}}}+\nm{v_{i-1}^{\bar{A}}}_{L^p_{k+1, \de, {\bar{A}}}})\nm{v_{i}^{\bar{A}}-v_{i-1}^{\bar{A}}}_{L^p_{k+1, \de, {\bar{A}}}} \\
& \le 2C_D C_Q \si  \nm{v_{i}^{\bar{A}}-v_{i-1}^{\bar{A}}}_{L^p_{k+1, \de, {\bar{A}}}}.
\eas
If we choose $\si$ small enough, we can ensure that:
\eas
\nm{v_{i+1}^{\bar{A}}-v_i^{\bar{A}}}_{L^p_{k+1, \de,{\bar{A}}}}  < \ha \nm{v_{i}^{\bar{A}}-v_{i-1}^{\bar{A}}}_{L^p_{k+1, \de, {\bar{A}}}}.
\eas
 Thus $\{v_i^{\bar{A}}\}_{i \in \N}$ is a Cauchy sequence in $L^p_{k+1,\de, {\bar{A}}}(\nu(N^{\bar{A}}))$ for each $\bar{A}$ simultaneously. We can thus find limits $v_\infty^{\bar{A}} \in L^p_{k+1, \de, {\bar{A}}}(\nu(N^{\bar{A}}))$. Since both $D_{\bar{A}}$ and $Q_{\bar{A}}$ are continuous maps of Banach manifolds, we have: 
\eas
D_{\bar{A}} v_\infty^{\bar{A}} &= \lim_{i \ra \infty} D_{\bar{A}} v_{i+1}^{\bar{A}} \\
&= \lim_{i \ra \infty} -F_{\bar{A}}(0) - Q_{\bar{A}}(v_i^{\bar{A}}) \\
&= -F_{\bar{A}}(0) - Q_{\bar{A}}(v_\infty^{\bar{A}}).
\eas 
Thus $F_{\bar{A}}(v_\infty^{\bar{A}}) = 0$. We then immediately get smoothness for $v_\infty^{\bar{A}}$ by Proposition \ref{3_1_F_A_smooth}.  By Proposition \ref{2_1_deformation_op_prop} we can conclude that $\tilde{N}^{\bar{A}} = \exp_{v_\infty^{\bar{A}}} (N^{\bar{A}})$ is a family of smooth Cayley submanifolds, as the family clearly only varies in a compact subset of $M$. The manifold $\tilde{N}^{\bar{A}}$ has the same topological type as $N^{\bar{A}}$ and together the $\tilde{N}^{\bar{A}}$
 form the desired desingularisation. 
 
Thus we can define a map $\Ga$ as above on the slice $\{\Phi\} \times\cM_{\CS}^{\bar{\mu}} (N, \{\Phi\}) \times \prod_{i=1}^k U_i$. We would now like to extend this map when the ambient $\Spin(7)$-structure is allowed to vary. For this, we first choose a trivialisation $T:\cS \times\cM_{\CS}^{\bar{\mu}}(N, \{\Phi\}) \simeq \times\cM_{\CS}^{\bar{\mu}} (N, \cS)$, which can be done by unobstructedness of $N$, using Theorem \ref{2_3_cs_structure}. Now we can repeat the above iteration scheme for $\Phi'$, where we now glue $\bar{A}$ onto $N' = T(\Phi', N)$. From this we see that smoothly varying the $\Spin(7)$-structure leads to a smooth change in the resulting submanifold.
 
 Note that $\nm{v_{\infty}^{\bar{A}}}_{L^p_{k+1, \de, {\bar{A}}}} \le 2 \nm{v_{0}^{\bar{A}}}_{L^p_{k+1, \de+1, {\bar{A}}}} \le Ct^{\nu(\mu-\de)}$, and thus as the scale $t$ tends to $0$, the resulting Cayley will converge in $L^p_{k+1,\de,\bar{A}}$ (thus in $C^{k}_{loc}$) to $N^{\bar{A}}$, which in turn converges in the sense of currents to the conically singular $N$. As we also have $C^k_{loc}$ convergence for any $k \ge 1$, we get $C^\infty_{loc}$ convergence as well. Moreover, there is nothing special about reducing the global scale as opposed to reducing only a subset of the scales to $0$. In this case the same argument localised to the singular points in question gives the $C^\infty_{loc}$ convergence to the partially desingularised $N$.
 
Finally, this construction is smooth in the gluing pieces away from cones. Indeed, varying the pieces gives rise to a smooth change of the p.d.e. $F_{\bar{A}}(v) = 0$, and all the constants involved in the iteration scheme remain valid. Thus the result will also vary smoothly.
\end{proof}

We would like to point out that Theorem \ref{1_1_Main_theorem} is not the only possible gluing result in this setting. What is needed in the construction are the following three ingredients. Whenever these are true, we can prove a corresponding gluing result.
\begin{itemize}
\item The initial error $\nm{F_t(0)}_{L^p_{k+1, \de, t}}$ needs to go to zero as the global neck size $t \ra 0$. 
\item The quadratic estimate \eqref{4.2_Q_diff_estimate} needs to hold for some constant $C_Q$.
\item The linearised operator needs to be invertible orthogonal to its kernel, and has to have uniformly bounded norm.
\end{itemize}

The first two items above are true as long as our initial approximation gets better in a $C^1$ sense as $t \ra 0$, and we know how to handle the local model of the noncompact piece (in this case a cone). In particular we do not need unobstructedness of the $\AC$ and $\CS$ pieces for these two items. We do however need it for the last item, where it is crucial that the glued operator is surjective and has a well understood behaviour in the $L^p_{k, \de, t}$ norms as $t \ra 0$. In the Theorem \ref{1_1_Main_theorem} we chose the rates of both pieces to be near $1$, and then included the slighty tricky rate $1$ into the moduli space of $\CS$ Cayleys. However, provided that $A_i \in \cM_{\AC}^{-\eps}$ and  $\cM_{\CS}^{1+\eps}$ are unobstructed (where we now allow the points to move in the $\CS$ moduli space), we can define a gluing map:

\e
\label{3_3_gluing_around_0}
\tilde{\Ga}: U \times \cM_{\CS}^{\bar{\mu}} (N, \{\Phi\}) \times \prod_{i=1}^k U_i  \longra \bigcup_{I \subset \{1, \dots, k\}} \cM_{\CS}^{\bar{\mu}_I}(N_I , \cS).
\e
Here $U_i \subset \overline{\cM}_{\AC}^{-\eps}$ are now not including the translations. They are included in the $\CS$ moduli space. Essentially we can define a gluing map whenever we have rates $\la < 1 < \mu$ for which the pieces are unobstructed, and we can include the translations and rotations manually on the conically singular side. 

Note however that if we are missing some critical rates, in the sense that there is a critical rate $\de \in \cD$ which not accounted for on either the $\AC$ or the $\CS$ piece, then the gluing map will not be surjective. So for instance, if we are given a cone with no critical rates in the range $(0, 1)$, we still have surjectivity of the map $\tilde{\Ga}$.

\subsection{Desingularisation of immersed Cayley submanifolds}

Note that if two Cayley planes intersect positively, then they cannot be desingularised by a minimal surface. This is due to the angle criterion, proved by Lawlor in \cite{lawlorAngleCriterion1989}. In the language of Lawlor, the conditions that the planes be Cayley and intersect positively imply that the characteristic angles between the two planes satisfy $\Th_1 + \Th_2 +\Th_3 +\Th_4 = 2\pi > \pi$, and so they do not satisfy the angle criterion. 
As an immediate consequence of Lemma \cite[Lemma 4.25]{englebertDeformations} and Theorem \ref{1_1_Main_theorem} we obtain the following desingularisation result, which is optimal by the preceding discussion.

\begin{thm}[Desingularisation of immersions]
\label{4_4_desingularisation}
Let $N$ be an unobstructed immersed Cayley submanifold which admits a negative self-intersection at $p \in N$. Then there is a family of Cayley submanifolds  with one fewer singular point $\{N_t\}_{t\in (0,\eps)}$ such that $N_t \ra N$ in the sense of currents and also in $C^\infty_{loc}$ away from the singularity as $t\ra 0$.
\end{thm}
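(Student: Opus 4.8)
The plan is to recast $N$ near its negative self-intersection point $p$ as a conically singular Cayley, smooth the resulting local cone by an asymptotically conical Cayley of $\R^8$, and then quote Theorem~\ref{1_1_Main_theorem} with $k=1$ and the trivial family $\cS=\{\Phi\}$. \emph{Step 1 (the conically singular model).} Near $p$ the immersion $N$ is the union of two embedded discs with transverse tangent Cayley $4$-planes $\Pi_1,\Pi_2\subset T_pM\simeq\R^8$. Regarding these discs as disjoint subsets of the abstract domain of the immersion turns $N$ into a $\CS_{\bar\mu}$-Cayley $\hat N$ carrying one extra conical singularity, modelled on the cone $C=\Pi_1\cup\Pi_2$ with link $L=\bS^3\sqcup\bS^3\subset\bS^7$; since $N$ is smooth the two sheets differ from their tangent planes by $O(r^2)$, so $\hat N$ is $\CS_\mu$ at this point for every $\mu\in(1,2)$. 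By Example~\ref{2_2_ex_weights} the critical set $\cD_L$ (two copies of the critical weights of a round $\bS^3$) contains no point of $(1,2)$, so I would fix $\mu\in(1,2)$ with $\cD_L\cap(1,\mu]=\emptyset$; and since each round $\bS^3$ is unobstructed as an associative, $C$ is an unobstructed cone.

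\emph{Step 2 (the smoothing and the hypotheses).} Here I would invoke \cite[Lemma~4.25]{englebertDeformations}: using that the negativity of the self-intersection makes $\Pi_1\cup\Pi_2$ satisfy Lawlor's angle criterion \cite{lawlorAngleCriterion1989}, it produces an unobstructed $\AC_\la$-Cayley $A\subset(\R^8,\Phi_0)$ with $0<\la<1$ asymptotic to $C$, and it matches the deformation theory of $N$ as an immersed Cayley with that of $\hat N$ as a $\CS_{\bar\mu}$-Cayley, so that unobstructedness of $N$ gives unobstructedness of $\hat N$ in $\cM^{\bar\mu}_{\CS}(\hat N,\{\Phi\})$. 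Again by Example~\ref{2_2_ex_weights} the only critical weight of $L$ in $(0,1]$ is $1$, which is excluded from $[\la,1)$, so $\cD_L\cap[\la,1)=\emptyset$. All hypotheses of Theorem~\ref{1_1_Main_theorem} are thus in place, with $k=1$ the index of the conical point lying over $p$, $\cS=\{\Phi\}$ and $A_1=A$; any further self-intersection points of $N$ simply become conical points of $\hat N$ of index $>k$ which are left untouched.

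\emph{Step 3 (conclusion and convergence).} Theorem~\ref{1_1_Main_theorem} then yields a continuous map $\Ga$ defined near $(\{\Phi\},\hat N,C)$; evaluating it on gluing data of scale $t\in(0,\eps)$, i.e.\ with $\tilde A_1\neq C$, produces Cayley submanifolds $N_t$ lying in $\cM^{\bar{\mu}_{\{1\}}}_{\CS}(N_{\{1\}},\{\Phi\})$ — that is, the conical point over $p$ is replaced by a rescaled copy of $A$ and no new singular point is created, so $N_t$ has exactly one fewer self-intersection point than $N$. The convergence claims are read off from the last paragraph of the proof of Theorem~\ref{1_1_Main_theorem}: the approximate Cayleys $N^{\bar A}$ converge to $\hat N$, hence to $N$ in the sense of currents (separating the double point does not change the underlying current), while $N_t=\exp_{v^{\bar A}_\infty}(N^{\bar A})$ with $\nm{v^{\bar A}_\infty}_{L^p_{k+1,\de,\bar A}}\lesssim t^{\nu(\mu-\de)}\to 0$, which upgrades to $C^\infty_{\loc}$ convergence of $N_t$ to $N$ away from $p$.

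I expect the only real work — and the reason the statement is phrased as an ``immediate consequence'' of \cite[Lemma~4.25]{englebertDeformations} and Theorem~\ref{1_1_Main_theorem} — to be hidden in Step~2: producing the $\AC$ smoothing $A$ of a negatively intersecting pair of Cayley planes and showing it is unobstructed, and checking that surjectivity of the immersed deformation operator of $N$ is equivalent to surjectivity of $D_{\CS}$ for $\hat N$ at rate $\bar\mu$ (no cokernel created, no deformation lost when passing to the weighted conically singular problem). Granting that lemma, the remainder is bookkeeping with the critical weights of $\bS^3\sqcup\bS^3$ and a direct appeal to the main theorem.
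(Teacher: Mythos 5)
Your proposal is essentially correct and takes the same route the paper intends: the paper gives no written proof, only the one-line statement that the theorem is ``an immediate consequence of Lemma 4.25 in \cite{englebertDeformations} and Theorem \ref{1_1_Main_theorem}, optimal by the preceding discussion'', and your reconstruction fills in exactly the bookkeeping implicit in that sentence. Step 1 (blowing up the transverse self-intersection into a conical singularity with cone $\Pi_1\cup\Pi_2$, link $\bS^3\sqcup\bS^3$, and rate any $\mu\in(1,2)$) is the right way to read ``immersed Cayley'' as a $\CS$ Cayley, and your weight checks against Example \ref{2_2_ex_weights} are correct: $\cD_{\bS^3\sqcup\bS^3}=\cD_{\bS^3}$ has no critical weight in $(1,2)$ nor in $(0,1)$, so both the $\CS$ and $\AC$ rate hypotheses of Theorem \ref{1_1_Main_theorem} are satisfiable with $\mu\in(1,2)$ and $\lambda\in(0,1)$. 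You are also right that once unobstructedness holds at the natural decay rate of the $\AC$ smoothing, it persists when $\lambda$ is raised into $(0,1)$, since raising the $\AC$ rate can only shrink the cokernel; and the reading-off of current and $C^\infty_{\loc}$ convergence from the last paragraph of the proof of Theorem \ref{1_1_Main_theorem}, via $\nm{v^{\bar A}_\infty}_{L^p_{k+1,\de,\bar A}}\lesssim t^{\nu(\mu-\de)}$, is exactly how the paper means it. The only substantive content not reproducible from this paper alone is what Lemma 4.25 of \cite{englebertDeformations} actually supplies; your two guesses (existence and unobstructedness of the $\AC$ ``Cayley-Lawlor neck'' over a negatively intersecting pair, and the identification of the immersed deformation theory of $N$ with the $\CS_{\bar\mu}$ deformation theory of $\hat N$) are precisely the inputs Theorem \ref{1_1_Main_theorem} needs, and you correctly flag them as the place the real work lives rather than pretending to derive them.
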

\begin{ex}
Consider the $\Spin(7)$-manifold $(T^8, \Phi_0)$, which is obtained as a quotient of $(\R^8, \Phi_0)$  by the lattice of integer points. Consider any affine plane in $\R^8$ which descends to a closed manifold in the quotient. Take for instance the special Lagrangian plane $\R^4 \subset \C^4$. It admits a $16$-dimensional space of Cayley deformations, however a $12$-dimensional subset of these is generated by rotations and thus not preserved in the quotient (as the image will be of a different topological type). What remains are the $4$-dimensional family of translation, which descend to the obvious translations of a $T^4 \times \{0\} \subset T^8$. Notice however that its Cayley moduli space has dimension $\ha(\sigma(T^4) - \chi(T^4)) = 0$ by  \cite[Example 4.12]{englebertDeformations}. Thus this four-torus is obstructed as a Cayley in the moduli space $\cM(T^8, \Phi)$. We can however modify the $\Spin(7)$-structure near $T^4$ so that the submanifold becomes unobstructed in the new moduli space $\cM(T^8, \Phi)$. In particular if we take the union of a finite number of such tori that each intersect each other negatively, we can construct a $\Spin(7)$ structure in which we can desingularise the union of tori using our gluing theorem \ref{1_1_Main_theorem} to obtain a connected sum of tori in a $(T^8, \Phi)$, where $\Phi$ is a small perturbation of the usual flat structure $\Phi_0$.
\end{ex}
\begin{ex}
Consider the CY fourfold $M = \{z_0^6 +z_1^6+z_2^6+z_3^6+z_4^6+z_5^6 = 0 \} \subset \C P^5$. In this manifold we can construct special Lagrangian and complex submanifolds which intersect in a point. The complex surface is $N = \{z_1 = iz_2, z_3 = iz_4\}$. For the special Lagrangian we choose the fixed-point locus of the following anti-holomorphic involution:
\eas
\si([z_0, z_1,z_2,z_3,z_4,z_5])=[\bar{z}_0,\bar{z}_1,\bar{z}_2,\bar{z}_3,e^{i\frac{\pi}{3}}\bar{z}_5].
\eas
We have that $L = \Fix (\si)$  is a special Lagrangian submanifold by \cite[Prop. 12.5.2]{joyceRiemannianHolonomyGroups2007}. They intersect negatively, however it turns out that the special Lagrangian is obstructed. Thus as in the previous example, we can only say that there is a Cayley in a nearby $\Spin(7)$-structure. More generally, special Lagrangians tend to be obstructed, as we see from Example 4.12 in \cite{englebertDeformations}. There we show that the obstruction space of a special Lagrangian $L$ in a CY fourfold $M$ is given by:
\eas
\cO(L) \simeq \Ho^0(L) \op \Ho^{2,-}(L).
\eas
In particular, if $L$ is connected we then have $\dim \cO(L) = 1 + b^{2,-}$. Not at least one obstruction seems to stem from possible choices of parameters in the Cayley form in the torsion free setting:
\eas
\Phi_\phi = \Re(e^{i\phi}\Om) + \ha \om \wedge \om. 
\eas
Here any choice of $\phi \in \R$ and any choice of $\om$ in the Kähler cone of $(M, \om)$ gives rise to a valid Cayley form. However note that if $L$ is special Lagrangian in $M$, i.e. $\Re(\Om)|_L= \dvol_L$, then the moduli space $\cM(L, \Phi_\phi)$ with $\phi \ne 0$ is necessarily empty, for by Stokes' theorem whenever $\tilde{L}$ is homologous to $L$:
\eas
\int_{\tilde{L}} \Phi_\phi= \int_{L} \Phi_\phi = \int_L \Re(e^{i\phi}\Om)  < \int_L \Re(\Om) = \vol(L), 
\eas
And thus no calibrated submanifolds in the homology class of $L$ can exist for $\Phi_\phi$, in the torsion free setting. We can remove the obstructions associated to $\phi$ manually by quotienting $M$ by an antiholomorphic involution. The only $\Spin(7)$ structures that descend to the quotient satisfy $\phi = 0$. However we do not know of any way to remove the obstruction coming from the choice of $\om$. We have yet to find any truly interesting examples of a desingularisation of a union of a complex surface and a special Lagrangian. 
\end{ex}

\addcontentsline{toc}{section}{References}
\bibliographystyle{alpha}

\bibliography{bibliography}

\medskip

\noindent{\small\sc The Mathematical Institute, Radcliffe
Observatory Quarter, Woodstock Road, Oxford, OX2 6GG, U.K.

\noindent E-mail: {\tt gilles.englebert@maths.ox.ac.uk.}}

\end{document}